\theoremstyle{plain}
\newtheorem{theorem}{Theorem}[section]
\newtheorem{lemma}[theorem]{Lemma}
\newtheorem{property}[theorem]{Property}
\theoremstyle{remark}
\definecolor{darkred}{rgb}{0.5,0,0}
\definecolor{darkgreen}{rgb}{0,0.5,0}
\newcommand{\vect}[1]{\boldsymbol{\mathbf #1}}
\newcommand{\grad}{\vect \nabla}
\newcommand{\dup}[2]{\left\langle#1,#2\right\rangle}
\newcommand{\ip}[2]{\left(#1,#2\right)}
\newcommand{\laplacian}{\Delta}
\newcommand{\dsurf}{\mathrm d \sigma}
\newcommand{\lp}[2]{L^{#1}({#2})}
\newcommand{\norm}[1]{\left\|#1\right\|}
\newcommand{\Span}[1]{\text{span}\{#1\}}
\newcommand{\abs}[1]{\left|#1\right|}
\newcommand{\seq}[4]{{\left\{#1_{#2}\right\}}_{#2=#3}^{#4}}
\newcommand{\energyA}[0]{\frac{1}{\varepsilon}}
\newcommand{\energyB}[0]{\varepsilon}
\renewcommand{\d}[0]{\mathrm{d}}
\renewcommand{\land}[0]{\textrm{ and }}
\renewcommand{\lor}[0]{\textrm{ or }}
\DeclareDocumentCommand\derivative{s m O{} m}{%
    \IfBooleanTF{#1}{\def\@der{\mathrm{d}}}{\def\@der{\partial}}
    \mathchoice{%
        \frac{
            \@der\ifnum\pdfstrcmp{#2}{1}=0\else^{#2}\fi {#3}
        }{%
            \@for\@token:={#4}\do{\@der \@token}
        }
    } {%
        \partial_{#4} #3
    } {} {}
}
\begin{document}

\title{A linear, second-order, energy stable, fully adaptive finite-element method for phase-field modeling of wetting phenomena}
\author{Benjamin Aymard $^1$, Urbain Vaes $^{1,2}$, Marc Pradas $^3$ and Serafim Kalliadasis $^1$}

\maketitle

{%
    \noindent \scriptsize \centering
    $^1$ Department of Chemical Engineering, Imperial College London, London SW7 2AZ, United Kingdom \\
    $^2$ Department of Mathematics, Imperial College London, London SW7 2AZ, United Kingdom \\
    $^3$ School of Mathematics and Statistics, The Open University, Milton Keynes MK7 6AA, United Kingdom \\
}

\begin{abstract}
\setlength{\parskip}{6pt}
We propose a new numerical method to solve the Cahn-Hilliard equation coupled with non-linear wetting boundary conditions.
We show that the method is mass-conservative and that the discrete solution satisfies a discrete energy law similar to the one satisfied by the exact solution.
We perform several tests inspired by realistic situations to verify the accuracy and performance of the method:
wetting of a chemically heterogeneous substrate in three dimensions, wetting-driven
nucleation in a complex two-dimensional domain and three-dimensional
diffusion through a porous medium.

\noindent \textbf{Keywords}: Wetting, diffuse interface theory, finite element method, Cahn-Hilliard equation, adaptive time step.
\end{abstract}

\section{Introduction}

Capillarity and wetting phenomena, driven primarily by interfacial forces,
are ubiquitous in a wide spectrum of natural phenomena and technological
applications. Examples range from the wetting of plant leaves by rainwater
and insects walking on water to coating processes, inkjet printing, oil
recovery and microfluidic devices; for reviews, see
e.g.~\cite{de1985wetting,WettingSpreading}. From an historical point of view,
two of the concepts essential to the understanding of capillarity and wetting
were introduced and studied already in 1805:
these are the Laplace pressure~\cite{Laplace1805} and the Young-Dupr\'e contact angle~\cite{Young1805}.
Later, following the work of Plateau on soap films~\cite{Plateau1873},
Poincar\'e~\cite{hp1895ca} linked interfacial phenomena with the theory of
minimal surfaces.

Wetting phenomena typically involve a fluid-fluid interface advancing or
receding on a solid substrate and a contact line formed at the intersection
between the interface and the substrate. The wetting properties of the
substrate determine to a large extent the behaviour of the fluids in the
contact-line region, and in particular the contact angle at the three-phase
conjunction, defined as the angle between the fluid-fluid interface and the
tangent plane at the substrate. At equilibrium, this is precisely the
Young-Dupr\'e angle. When one of the two fluids moves against the other, the
contact angle becomes a dynamic quantity, and when the problem is formulated
in the framework of conventional hydrodynamics, the contact line motion
relatively to the solid boundary results in the notorious stress singularity
there, as first noted in the pioneering studies by Moffat~\cite{Moffat1963}
and Huh and Scriven~\cite{HuhScriven}. Since then there have been
numerous analyses and discussions of the singularity over the years,
e.g.~Refs.~\cite{DussanDavis,Hocking1983,Cox1986} and also recent
studies in Refs.~\cite{Davidfifteena,David2015b} (with the latter one
revisiting the classical Cox-Hocking matched asymptotic analysis and
providing a correction to it).

A popular model for  interface dynamics is the Cahn-Hilliard (CH)
equation~\cite{cahn1961spinodal,cahn1958free}, which belongs to the class of
phase-field and diffuse interface models. Originally proposed to model
spinodal decomposition and the mechanism by which a binary mixture can
separate to form two coexisting phases due to, e.g., a change of
temperature~\cite{cahn1958free}, it has been used in a wide spectrum of
different contexts since, such as solidification
phenomena~\cite{PFMsolidifaction} and  Saffman-Taylor instabilities in
Hele-Shaw flows~\cite{InterfaceRougheningInHeleShawFlows}. To account for
wetting phenomena and contact lines on solid boundaries, the CH equation can
be coupled to a wall boundary condition~\cite{CahnSeventySeven}. Such CH model has
been employed successfully in various situations, including microfluidic
devices~\cite{demenech2006modeling,demenechtransition,Queralt_et_al,Wylock2012}, flow in porous
media~\cite{bogdanov2010pore}, rheological systems~\cite{boyer2004numerical}, and
patterning of thin polymer films~\cite{kim2006three}. Other potential
applications include micro-separators~\cite{roydhouse2014operating}, fuel
cells~\cite{benzigerwater} and CPU chip cooling based on electro-wetting
\cite{chipCooling}. Many of these applications are characterized by the
presence of chemically heterogeneous substrates and/or complex geometries,
which make their numerical simulation challenging.

The form of the wetting boundary condition is dictated by the form of the
wall free energy. For liquid-gas problems linear forms have been adopted,
e.g. in the pioneering study by Seppecher~\cite{Seppecher} and
Refs.~\cite{briantlattice, XuQian2010}. But a cubic is the lowest-order
polynomial required such that the wall free energy can be minimised for the
bulk densities and prevents the formation of boundary layers on the wall
ahead of a moving contact line such as precursor films or any density
gradients that might alleviate the discontinuity. Cubic forms have been
adopted for binary fluid problems, e.g.~Refs~\cite{jacqmin2000contact,
Yue-et-al2010}, but also for liquid-gas ones,~Refs~\cite{davidthirteena,
davidthirteenb}. The latter studies, in particular, showed asymptotically that a
CH model can alleviate the contact line discontinuity without any additional
physics (and at the same time completing but also correcting Seppecher's
work). Detailed asymptotic analysis of the unification of binary-fluid CH
models can be found in Ref.~\cite{davidthirteenc}.

Various approaches have been proposed in the literature for the numerical
solution  of the CH equation. Because of the high order of the equation and
its multiscale features (scale separation between interface size and the characteristic length),
most existing time-stepping schemes are implicit or semi-implicit.
Several of these schemes aim to satisfy discrete mass and energy laws in
agreement with the underlying continuum model. Discretization in space can be
achieved using finite-difference methods~\cite{furihata2001stable,kimJunseok2011},
finite element methods~\cite{barrett1999finite,elliott1992error,tucker2013},
or spectral methods~\cite{weng2017fourier}.
In addition, the computation time can be reduced by applying adaptive mesh
refinement~\cite{yue2006phase,dtAdapt} and time-step adaptation~\cite{guillen2014second}.

Among the several linear schemes for the CH equation with homogeneous Neumann
boundary conditions introduced in~\cite{guillen2013linear}, the authors have
shown by means of numerical experiments that, their second-order optimal
dissipation scheme, referred to as OD2, is the most accurate and the one
introducing the least numerical dissipation. In this work, we outline a
numerical scheme that extends and appropriately generalises OD2 as follows:
(a) it includes a non-linear wetting boundary condition; (b) it adopts an
efficient energy-based time-step adaptation strategy. In contrast with the
time-adaptation scheme introduced in~\cite{guillen2014second}, where the time
step is adapted to limit numerical dissipation, we base the time-step
adaptation directly on the variation of free energy. With this method we are
able to solve the CH system efficiently and systematically to capture wetting
phenomena in both two- and three-dimensional (2D and 3D, respectively)
settings, and in a wide range of situations, including confinement with
complex geometry, chemical and topographical heterogeneities, or both.

Like the OD2 scheme on which it was based, the time-stepping scheme we
propose is semi-implicit and linear.
We show that it is
mass-conservative and satisfies a discrete free-energy law with a numerical
dissipation term of order 2 in time. Space discretization is achieved using a
finite-element method, leading to an unsymmetrical sparse linear system to
solve at each iteration.
We use a mesh refinement strategy to capture interfaces precisely,
and an adaptive time step to limit the variation of free energy at each step,
with the aim of increasing the resolution in time during fast phenomena.

To test the efficiency of the proposed numerical scheme we consider several
wetting problems as test cases. We first study relaxation towards equilibrium
in two situations: the spreading of a sessile droplet and the coalescence of
two sessile droplets on a flat, chemically homogeneous substrate. We then
consider two-component systems in complex geometries delimited by chemically
heterogeneous substrates in both 2D and 3D.

In \cref{sec:mathematical_model}, we introduce the CH system and the non-linear wetting boundary condition.
In \cref{sec:numerical_method_cahn-hilliard}, we outline our numerical scheme and prove the associated conservation properties.
In \cref{sec:simulations}, we present the results of several numerical experiments.
Conclusions and perspectives for future work are offered in \cref{sec:conclusions}.


\section{Phase-field model for wetting phenomena}
\label{sec:mathematical_model}

Throughout this study, $\Omega \subset \mathbb{R}^d$ corresponds to a
$d$-dimensional domain, $\partial \Omega$ denotes its boundary with outward
unit normal vector $\textbf{n}$, $\Gamma_S$ is the solid substrate and
$\Gamma_G = \partial \Omega \setminus \Gamma_S$. The CH system we use to
describe the  dynamics of two immiscible fluids in contact with a solid
substrate, is a free-energy-based model. The starting point is the
introduction of a locally conserved field, denoted by $\phi: \Omega
\rightarrow \mathbb R$, that plays the role of an order-parameter: two
equilibrium values, say $+1$ and $-1$, represent the pure phases, and the
interface is conventionally located at the points where $\phi =
0$~\cite{cahn1961spinodal, cahn1958free}. We consider systems with a free
energy given by
\begin{align}
\label{eq:cahn-hilliard:freeEnergy}
E(\phi) &:= E_m(\phi) + E_w(\phi) \\
\label{eq:cahn-hilliard:freeEnergy_detail}
        &:= \int_{\Omega} \left( \frac{1}{\varepsilon}F_m(\phi) + \varepsilon \frac{| \grad \phi |^2}{2} \right) \,\d \Omega + \int_{\partial\Omega} F_w(\phi)\,\dsurf,
\end{align}
where the two terms, $E_m$ and $E_w$, represent the mixing and wall components
of the free energy, respectively.
Here $F_m(\phi) = \frac{1}{4}(\phi^2 - 1)^2$ and $F_w$ is taken to be a cubic polynomial, following
e.g.~Refs~\cite{davidthirteena,davidthirteenb}:
\begin{equation}
        \label{eq:cahn-hilliard:wallEnergy}
        F_w(\phi) = \frac{\sqrt{2}}{2}\cos \theta(\vect x) \left(\frac{\phi^3}{3} - \phi \right),
\end{equation}
where $\theta = \theta(\vect x)$ is the equilibrium contact angle, which can
depend on the spatial position $\vect x$.
From the expression of the free energy,
we calculate that, for a sufficiently smooth function $\psi: \Omega \rightarrow \mathbb R$:
\begin{align}
    \derivative*{1}{\alpha} E(\phi + \alpha \, \psi) \big|_{\alpha = 0}
    &= \int_{\Omega} \left( \frac{1}{\varepsilon} f_m(\phi) - \varepsilon \laplacian \phi\right) \psi \, \d \Omega
    + \int_{\partial \Omega} (f_w(\phi) + \varepsilon \grad{\phi} \cdot \vect n) \, \psi \, \dsurf,
\end{align}
with $f_m = F_m'$ and $f_w = F_w'$,
so the chemical potential is equal to
\begin{equation}
        \label{eq:cahn-hilliard:chemicalPotential}
         \mu := \frac{\delta E}{\delta \phi} = \frac{1}{\varepsilon} f_m(\phi) - \varepsilon \,\laplacian \phi,
\end{equation}
and the natural boundary condition associated with the surface energy is
\begin{equation}
\label{eq:wettingBC}
\varepsilon \grad \phi \cdot \vect{n} = - f_w(\phi) = \frac{\sqrt{2}}{2}\cos\theta(\vect{x})(1-\phi^2).
\end{equation}
We assume that the dynamics of the system is governed by the CH equation,
\begin{equation}
        \label{eq:cahn-hilliard:CH}
        \derivative{1}[\phi]{t} = \grad \cdot (b(\vect x) \, \grad \mu),
\end{equation}
where $b(\vect x)$ is a mobility parameter, assumed to be uniform hereafter.
This leads to the following mass-conservation property:
\begin{equation}
    \label{Mlaw}
    \derivative*{1}{t} M(\phi) := \derivative*{1}{t} \int_{\Omega} \phi \, \d \Omega = \int_{\partial \Omega} b \, \grad{\mu} \cdot \vect n \, \dsurf,
\end{equation}
so the mass flux at the boundary can be specified using the condition $b
\grad{\mu} \cdot \vect n = \dot m(\vect x)$, where $\dot m(\vect x)$ is the
desired mass flux. In particular, we will set $\dot m(\vect x) = 0$ at the
solid boundary, $\Gamma_S$. In summary, the equations we are solving in this
study are the following:
\begin{subequations}
    \begin{align}
        \label{eq:summary_system_phi} \derivative{1}[\phi]{t} &= \grad \cdot (b(\vect x) \, \grad \mu), \\
        \label{eq:summary_system_mu}  \mu &= \frac{1}{\varepsilon} f_m(\phi) - \varepsilon \, \laplacian \phi \quad &\text{for $x \in \Omega$, $t \in (0,T]$},\\
        \label{eq:summary_system_boundary_condition_phi} \varepsilon \grad \phi \cdot \vect n &= -f_w(\phi), \\
        \label{eq:summary_system_boundary_condition_mu}  b \, \grad \mu \cdot \vect n &= \dot m(\vect x) \quad &\text{for $x \in \partial\Omega$, $t \in (0,T]$}.
    \end{align}
\end{subequations}
In addition to the conservation of mass, \cref{eq:summary_system_phi,eq:summary_system_mu,eq:summary_system_boundary_condition_phi,eq:summary_system_boundary_condition_mu}
imply the following energy-conservation law, involving the phase field and the chemical potential:
\begin{equation}
  \label{Elaw}
  \derivative*{1}{t}E(\phi(t)) =
  - \|\sqrt{b}\,\grad \mu \|_{L^2}^2
  + \int_{\partial \Omega} \dot m \, \mu \,\dsurf.
\end{equation}
An advantage of the cubic surface energy~\eqref{eq:cahn-hilliard:wallEnergy} over other surface energy formulations
(see~\cite{huang2015wetting} for a review of wetting boundary conditions for binary fluids)
is that the well-known hyperbolic tangent profile is an equilibrium solution in more than 1 dimension.
Specifically, the function
\begin{equation}
    \label{eq:solution_easy_case}
    \phi(\vect x) = \tanh \left( \frac{\vect x \cdot \vect u}{\sqrt 2 \, \varepsilon}\right), \quad \text{where } \vect{u} = (\pm\sin\theta, \cos\theta)^T
\end{equation}
is solution to the CH equation posed in the half plane $\{y \geq 0\}$ with
the boundary condition \eqref{eq:cahn-hilliard:wallEnergy} at $\{ y = 0\}$
and constant $\theta(\vect x) = \theta$.
A schematic representation of this solution
and the corresponding fluid-fluid interface is given
in~\cref{fig:contact_angle_illustration}.
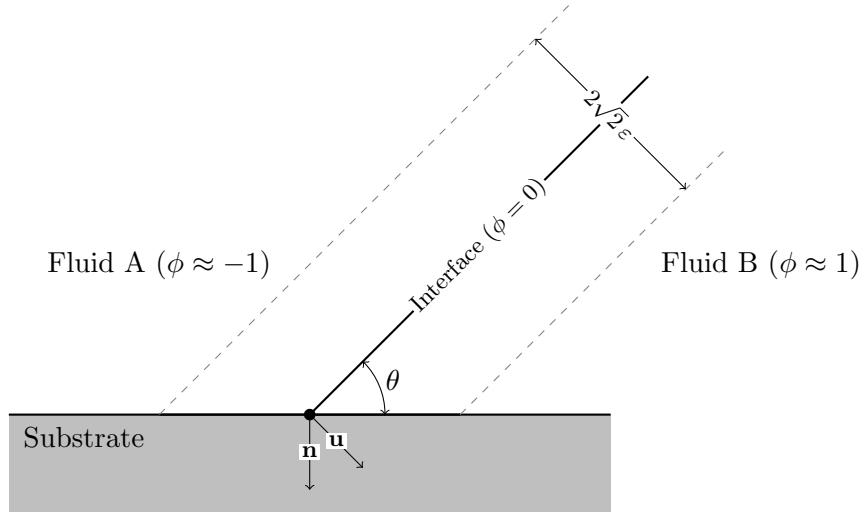
\begin{figure}[htb]
    \centering{%
        \begin{tikzpicture}
            \tikzstyle{ann} = [fill=white,font=\footnotesize,inner sep=1pt]
            \coordinate (origin) at (0,0);
            \fill[lightgray] ($ (origin) + (-4,-1.3) $) rectangle ($ (origin) + (4,0.0) $);
            \draw[thick] ($ (origin) + (-2,0) $) -- ($ (origin) + (2,0) $);
            \draw[black, thick] (origin) -- ++(4.5,4.5) node [midway, ann, sloped] {Interface ($\phi = 0$)};;
            \draw[black, thick] (-4,0) -- (4,0);
            \draw[gray, dashed] (-2,0) -- ++(5.5,5.5);
            \draw[gray, dashed] (2,0) -- ++(3.5,3.5);

            \draw[arrows=<->] ($ (4,4) + (-1, 1) $) -- ($ (4,4) + (1, -1) $) node [midway, ann, sloped] {$2\sqrt{2} \, \varepsilon$};

            \draw[arrows=->] (0,0) -- ++(0,-1) node [midway, ann] {$\vect n$};
            \draw[arrows=->] (0,0) -- ( $1/sqrt(2)*(1,-1)$ ) node[midway, ann] {$\vect u$};

            \filldraw[black] (0,0) circle (2pt);
            \node (substrate) at (-3, -0.3) {Substrate};
            \node (fluid1) at (-2,2) {Fluid A ($\phi \approx -1$)};
            \node (fluid2) at (6,2) {Fluid B ($\phi \approx 1$)};
            \draw
                (1,0) coordinate (a) -- (0,0) coordinate (b)
                -- (1,1) coordinate (c)
                pic["$\theta$", draw=black, <->, angle eccentricity=1.2, angle radius=1cm]
                {angle=a--b--c};
        \end{tikzpicture}
    }
    \caption{Schematic of the profile geometry of a fluid-fluid interface intersecting a solid boundary and
    illustration of the stationary solution~\eqref{eq:solution_easy_case}.}
    \label{fig:contact_angle_illustration}
\end{figure}

A drawback of the cubic wall energy~\eqref{eq:cahn-hilliard:wallEnergy} is that the conservation of energy
no longer seems to imply stability bounds for the solution,
making it impossible to use the tools traditionally employed (see e.g. \cite{elliott1996cahn}) to
prove the well-posedness of the system.
Indeed, an application of the trace inequality gives only that,
under appropriate regularity assumptions on $\phi$:
\begin{subequations}
    \begin{align}
        \label{eq:trace_with_cubic_wetting}
        \norm{\phi^3}_{\lp{1}{\partial \Omega}} &\leq C \, (\norm{\phi^3}_{\lp{1}{\Omega}} + \norm{\grad (\phi^3)}_{\lp{1}{\Omega}}) \\
                                                &= C \, (\norm{\phi^3}_{\lp{1}{\Omega}} + 3\norm{\phi^2 \, \grad \phi}_{\lp{1}{\Omega}}) \\
                                                &\leq C \, (\norm{\phi^3}_{\lp{1}{\Omega}} + \frac{3}{2 \alpha}\norm{\phi^4}_{\lp{1}{\Omega}} + \frac{3\alpha}{2}\norm{\grad \phi}_{\lp{2}{\Omega}}^2) \quad \forall \alpha > 0,
    \end{align}
\end{subequations}
where we used H\"older's inequality and Young's inequality with a parameter.
Therefore, the wall energy cannot be controlled by the mixing energy for arbitrary domains.
This issue can be remedied by a simple modification of the wall energy outside of the physical range $[-1; 1]$;
instead of~\eqref{eq:cahn-hilliard:wallEnergy}, we consider the following wall energy:
\begin{equation}
    \label{eq:modified_wall_energy}
    F_w^*(\phi) = \frac{\sqrt{2}}{2} \cos \theta(\vect x) \times
    \begin{cases}
        \left(\frac{2}{3} - (\phi + 1)^2 \right) \quad &\text{ if }\phi < - 1; \\
        \left(\frac{\phi^3 -3 \phi}{3} \right) \quad &\text{ if }\phi \in [-1, 1]; \\
        \left(- \frac{2}{3} + (\phi - 1)^2\right) \quad &\text{ if } \phi > 1, \\
    \end{cases}
\end{equation}
This function is such that $F_w^*(\phi) = F_w (\phi)$ for $\phi \in [-1, 1]$, $F_w^* \in C^2(\mathbb R)$, and $(F_w^*)''$ is absolutely continuous,
which makes it possible to prove the second order convergence of our time-stepping scheme, see \cref{sec:numerical_method_cahn-hilliard}.
Another possibility would have been to choose constant values for $F_w^*$ outside of the interval $[-1; 1]$,
but this would have lead to $F_w^*$ being only $C^1(\mathbb R)$,
making it more difficult to show second order convergence theoretically.
The weak formulation of
\cref{eq:summary_system_phi,eq:summary_system_mu,eq:summary_system_boundary_condition_phi,eq:summary_system_boundary_condition_mu}
with the modified wall energy~\eqref{eq:modified_wall_energy}
is as follows: find $(\phi, \mu)$ such that
\begin{align}
    \label{eq:spaces_for_weak_formulation}
    \phi \in \lp{\infty}{0,T; H^1(\Omega)}, \quad \derivative{1}[\phi]{t} \in \lp{2}{0,T;(H^1(\Omega))'}, \quad \mu \in \lp{2}{0,T;H^{1}(\Omega)},
\end{align}
and the following variational formulation is satisfied:
\begin{subequations}
    \begin{align}
        \label{eq:variational_equations_for_weak_formulation_phi}
        & \dup{\textstyle \derivative{1}[\phi]{t}}{\psi} + \ip{b \, \grad \mu}{\grad \psi} = \ip{\dot m}{\psi}_{\partial \Omega} \, \quad & \forall \psi \in H^1(\Omega) \text{ and a.e. } t,\\
        \label{eq:variational_equations_for_weak_formulation_mu}
        & \ip{\mu}{\nu} = \energyB\, \ip{\grad \phi}{\grad \nu} + \energyA \ip{f_m(\phi)}{\nu}  + \ip{f_w^*(\phi)}{\nu}_{\partial \Omega}  \quad & \forall \nu \in H^1(\Omega) \text{ and a.e. } t,
    \end{align}
\end{subequations}
with $f_w^* := (F_w^*)'$ and where $\langle\cdot,\cdot\rangle$, $(\cdot,\cdot)$ and $(\cdot, \cdot)_{\partial \Omega}$ denote respectively
the duality pairing between $(H^1(\Omega))'$ and $H^1(\Omega)$,
the standard inner product in $\lp{2}{\Omega}$,
and the standard inner product in $\lp{2}{\partial \Omega}$.
For simplicity of notations,
the symbols $F_w$, $f_w$ and $E$ will refer in the rest of this paper to
$F_w^*$, $f_w^*$, and $E_m + \int_{\partial \Omega} F_w^* \, \dsurf$, respectively.

\subsection{Well-posedness of the model}
\label{sub:well_posedness_of_the_model}
We can show the following existence result
for the weak formulation of the Cahn-Hilliard system with the modified boundary condition presented above,
under appropriate regularity assumptions for the initial condition and the mass flux $\dot m$.
\begin{theorem}
    \label{theorem:well-posedness_modified_boundary_condition}
    Assume that $\phi_0 \in H^1(\Omega)$ and $\dot m \in C([0,T];\lp{2}{\partial\Omega})$.
    Then there exists a pair of functions $(\phi,\mu)$, with
    \begin{enumerate}
        \item $\phi \in L^{\infty}(0,T; H^1(\Omega)) \bigcap C([0, T]; \lp{2}{\Omega})$,
        \item $\derivative{1}[\phi]{t} \in \lp{2}{0,T;(H^1(\Omega))'}$,
        \item $\phi(0) = \phi_0$,
        \item $\mu \in \lp{2}{0,T;H^1(\Omega)}$,
    \end{enumerate}
    that solve the variational formulation~\cref{eq:variational_equations_for_weak_formulation_phi,eq:variational_equations_for_weak_formulation_mu}.
\end{theorem}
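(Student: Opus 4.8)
The plan is to construct a solution by the Faedo--Galerkin method, to derive uniform a priori bounds from a discrete analogue of the energy law~\eqref{eq:energy_conservation_cahn_hilliard}, and then to pass to the limit using compactness. First I would fix an orthogonal basis $\seq{w}{i}{1}{\infty}$ of $H^1(\Omega)$ that is orthonormal in $\lp{2}{\Omega}$ and contains the constants, for instance the eigenfunctions of the Neumann Laplacian, and seek $\phi_n(t) = \sum_{i=1}^n a_i(t)\,w_i$ and $\mu_n(t) = \sum_{i=1}^n b_i(t)\,w_i$ solving the projection of~\eqref{eq:variational_equations_for_weak_formulation_phi}--\eqref{eq:variational_equations_for_weak_formulation_mu} onto $\Span{w_1,\dots,w_n}$, with $\phi_n(0)$ the $\lp{2}{\Omega}$-projection of $\phi_0$. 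This yields a system of ODEs for the coefficients whose right-hand side is locally Lipschitz, since $f_m$ is a polynomial and $f_w^*$ is globally Lipschitz (because $(F_w^*)''$ is bounded), so Cauchy--Lipschitz provides a unique local-in-time solution.

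The core of the argument is the a priori estimate. Testing the discrete $\phi$-equation with $\mu_n$, testing the discrete $\mu$-equation with the admissible test function $\partial_t \phi_n$, and combining the two while using $\ip{\mu_n}{\partial_t \phi_n} = \dup{\partial_t \phi_n}{\mu_n}$, yields the discrete energy identity
\begin{equation*}
    \frac{\d}{\d t}E(\phi_n) + \norm{\sqrt{b}\,\grad\mu_n}_{\lp{2}{\Omega}}^2 = \ip{\dot m}{\mu_n}_{\partial\Omega}.
\end{equation*}
To close this I would bound the right-hand side and extract coercive control of $\phi_n$ from $E(\phi_n)$. Splitting $\mu_n = \overline{\mu}_n + (\mu_n - \overline{\mu}_n)$ with $\overline{\mu}_n$ the spatial average, the Poincaré--Wirtinger and trace inequalities together with Young's inequality absorb $\ip{\dot m}{\mu_n - \overline{\mu}_n}_{\partial\Omega}$ into a fraction of the dissipation and a term $C\norm{\dot m}_{\lp{2}{\partial\Omega}}^2$; the average $\overline{\mu}_n$ is controlled by testing the $\mu$-equation with the constant, which expresses it through $\int_\Omega f_m(\phi_n)$ and $\int_{\partial\Omega} f_w^*(\phi_n)$. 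The decisive point, and the reason for replacing $F_w$ by $F_w^*$, is that $F_w^*$ grows only quadratically and $f_w^*$ only linearly at infinity, so that $|\int_{\partial\Omega} F_w^*(\phi_n)| \le C(1 + \norm{\phi_n}_{\lp{2}{\partial\Omega}}^2)$ and, by the trace inequality with a parameter, $\norm{\phi_n}_{\lp{2}{\partial\Omega}}^2 \le \delta\,\norm{\grad\phi_n}_{\lp{2}{\Omega}}^2 + C_\delta\,\norm{\phi_n}_{\lp{2}{\Omega}}^2$. Choosing $\delta$ small relative to $\varepsilon$ lets the boundary energy be absorbed into $\energyB\norm{\grad\phi_n}_{\lp{2}{\Omega}}^2/2$ and into the $\lp{4}{\Omega}$ control coming from $F_m$, so that $E(\phi_n)$ is bounded below and dominates $\norm{\phi_n}_{H^1(\Omega)}^2$ up to a constant. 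A Grönwall argument then gives bounds, uniform in $n$ and $t\in[0,T]$, for $\phi_n$ in $\lp{\infty}{0,T;H^1(\Omega)}$, for $\mu_n$ in $\lp{2}{0,T;H^1(\Omega)}$, and, via the $\phi$-equation, for $\partial_t \phi_n$ in $\lp{2}{0,T;(H^1(\Omega))'}$; in particular the local solution extends to all of $[0,T]$.

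Finally I would extract weak-$*$ and weak limits $\phi_n \rightharpoonup \phi$, $\mu_n \rightharpoonup \mu$ and $\partial_t \phi_n \rightharpoonup \partial_t \phi$ in the respective spaces. Since $\phi_n$ is bounded in $\lp{2}{0,T;H^1(\Omega)}$ with $\partial_t \phi_n$ bounded in $\lp{2}{0,T;(H^1(\Omega))'}$, the Aubin--Lions lemma, applied with an intermediate fractional space $H^s(\Omega)$, $\tfrac12 < s < 1$, whose trace embeds continuously into $\lp{2}{\partial\Omega}$, yields strong convergence $\phi_n \to \phi$ in $\lp{2}{0,T;\lp{2}{\Omega}}$ together with strong convergence of the traces in $\lp{2}{0,T;\lp{2}{\partial\Omega}}$. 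Combined with the uniform higher integrability from the $\lp{4}{}$ bounds, this is enough to pass to the limit in the bulk nonlinearity $f_m(\phi_n)$ and, crucially, in the boundary nonlinearity $f_w^*(\phi_n)$, whose linear growth makes it uniformly integrable on $\partial\Omega$; the linear terms pass by weak convergence. The bound on $\partial_t \phi_n$ also gives $\phi \in C([0,T];\lp{2}{\Omega})$ and $\phi_n(0)\to\phi(0)$, so that $\phi(0)=\phi_0$. I expect the main obstacle to be the a priori estimate: controlling the wall-energy contribution and the flux term $\ip{\dot m}{\mu_n}_{\partial\Omega}$ simultaneously with the bulk terms is precisely what fails for the unmodified cubic $F_w$, and it is the quadratic growth of $F_w^*$ that makes the trace-inequality absorption succeed.
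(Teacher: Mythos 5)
Your proposal follows essentially the same route as the paper's Appendix~A proof: a Faedo--Galerkin approximation in the Neumann-Laplacian eigenbasis, local existence for the coefficient ODEs, the same discrete energy identity obtained by testing with $\mu_n$ and $\partial_t\phi_n$, absorption of the quadratically growing wall energy $F_w^*$ into the mixing energy via a trace inequality with a small parameter, control of the flux term $\ip{\dot m}{\mu_n}_{\partial\Omega}$ by splitting $\mu_n$ into its spatial mean (bounded by testing the $\mu$-equation with a constant) plus a Poincar\'e-controlled fluctuation, a Gr\"onwall argument, and Aubin--Lions-type compactness to pass to the limit. The only noteworthy deviation is your treatment of the boundary nonlinearity: you obtain strong convergence of the traces via an intermediate space $H^s(\Omega)$, $\tfrac12<s<1$, and the Lipschitz continuity of $f_w^*$, whereas the paper identifies the weak-$*$ limit of $f_w(\phi^N)$ in $\lp{\infty}{0,T;\lp{2}{\partial\Omega}}$ through weak continuity of the trace operator --- your variant is, if anything, the more airtight justification of that step.
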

\begin{proof}
    See \cref{sec:proof_of_well_posedness_theorem}.
\end{proof}


\section{Numerical method}
\label{sec:numerical_method_cahn-hilliard}

In this section we introduce a new time-stepping scheme to solve the CH
equation (\ref{eq:cahn-hilliard:CH}) with the non-linear wetting boundary
condition \eqref{eq:wettingBC}, which is  a generalisation of the optimal
dissipation scheme of order 2, OD2, developed in~\cite{guillen2013linear}. We
decided to extend this particular scheme because the authors
of~\cite{guillen2013linear} showed that, among all the linear schemes they
proposed, it is the most accurate and the least dissipative. And in selected
test cases, they showed that for a large enough time step, it is the only
scheme that leads to the correct equilibrium solution. We refer to our scheme
as OD2-W, with W denoting wetting, and show that it leads to a consistent
discrete energy law.

We also develop a new adaptive time-stepping strategy which, combined with
adaptation in space, leads to a fully adaptive finite-element method. An
excellent introduction to the finite-element method and corresponding mixed
formulations can be found in Ref.~\cite{Ciarlet} and to mesh generation
and adaptive refinement in Ref.~\cite{Frey}.

\subsection{OD2-W scheme}


In this section, we assume for simplicity that $\dot m = 0$ and that $\theta$ is uniform on $\partial \Omega$.
We denote by $\Delta t$ the time step,  and by $\phi^n$ and $\mu^{n+1/\alpha}$
the numerical approximations of $\phi$ and $\mu$ at times $t^n$ and $t^n + \frac{1}{\alpha}\Delta t$, respectively.
To define a discretization in time of the CH system appropriate for wetting phenomena,
we follow the approach proposed in~\cite{guillen2013linear} to design an optimal dissipation scheme,
and consider the following generic implicit-explicit numerical scheme:
given $\phi^n \in H^1(\Omega)$, find $(\phi^{n+1}, \mu^{n+\frac{1}{\alpha}}) \in H^1(\Omega) \times H^1(\Omega)$ such that,
$\forall (\psi,\nu) \in H^1(\Omega) \times H^1(\Omega)$,
\begin{subequations}
    \label{VF}
    \begin{align}
        \label{VF_phi}
        & \ip{\delta_t \phi^{n+1}}{ \psi} + \ip{b \, \grad \mu^{n+\frac{1}{\alpha}}}{\grad \psi} = 0  \quad \quad & \forall \psi \in H^1(\Omega),\\
        \label{VF_mu}
        & \ip{\mu^{n + \frac{1}{\alpha}}}{ \nu} = \energyB\,\ip{\grad \phi^{n + \frac{1}{\alpha} + \beta}}{ \grad \nu} + \energyA \ip{\hat f_m(\phi^n,\phi^{n+1})}{\nu} + \ip{\hat f_w(\phi^n, \phi^{n+1})}{\nu}_{\partial\Omega} \quad & \forall \nu \in H^1(\Omega).
    \end{align}
\end{subequations}
In these expressions, $\hat f_m, \hat f_w$ are functions to be specified,
linear in their second argument.
The parameter $\alpha \in \{1,2\}$ determines the accuracy of the numerical scheme,
and the parameter $\beta \in [0, 1 - 1/\alpha]$ controls the numerical diffusion.
The function $\phi^{n + \frac{1}{\alpha} + \beta}$ is defined by linear interpolation
between $\phi^n$ and $\phi^{n+1}$,
\begin{equation}
    \label{eq:definition_of_phi_interpolation}
    \phi^{n + \frac{1}{\alpha} + \beta} := \left(1 - \frac{1}{\alpha} - \beta\right) \, \phi^n + \left(\frac{1}{\alpha} + \beta\right) \, \phi^{n+1},
\end{equation}
and $\delta_t \phi^{n+1}$ is the approximation of the time derivative of $\phi$ given by
\begin{equation}
    \label{eq:od1:convect}
    \frac{\phi^{n+1} - \phi^n}{\Delta t}.
\end{equation}
In most numerical experiments presented in this paper, we consider the case $(\alpha,\beta) = (2,0)$ (OD2-W),
but we note that other usual choices include $(\alpha,\beta) = (1,0)$ (OD1-W) and $(\alpha,\beta) = (2, \mathcal O(\Delta t))$ (OD2mod-W).
By taking $\psi = \mu^{n+\frac{1}{\alpha}}$ and $\nu = \delta_t \phi^{n+1}$ in \eqref{VF},
we obtain
\begin{equation}
  \label{eq:energy_law_of_the_numerical_scheme}
    \frac{E(\phi^{n+1}) - E(\phi^n)}{\Delta t} = - \|\sqrt{b} \, \grad \mu^{n+\frac{1}{\alpha}} \|_{\lp{2}{\Omega}}^2 - ND(\phi^n, \phi^{n+1}), \quad n = 0, 1, \ldots,
\end{equation}
where $ND(\phi^{n},\phi^{n+1})$,
representing the non-physical numerical dissipation introduced by the time-stepping scheme,
can be broken down in three parts:
\begin{equation}
    ND(\phi^n, \phi^{n+1}) = \varepsilon ND_{\textrm{philic}}(\phi^n,\phi^{n+1}) + \frac{1}{\varepsilon} ND_{\textrm{phobic}}(\phi^n,\phi^{n+1}) + ND_{\textrm{wall}}(\phi^n,\phi^{n+1}),
\end{equation}
with
\begin{equation}
    \label{eq:numerical_dissipation}
    \begin{aligned}
        ND_{\textrm{philic}}(\phi^n,\phi^{n+1}) &= \Delta t \int_{\Omega} \, \left[\frac{1}{\alpha} - \frac{1}{2} + \beta \right] \, \abs{\grad \delta_t \phi^{n+1}}^2  \, \d \Omega, \\
        ND_{\textrm{phobic}}(\phi^n,\phi^{n+1}) &= \int_{\Omega} \left( \hat f_m(\phi^n,\phi^{n+1})\, \delta_t  \phi^{n+1} - \frac{1}{\Delta t}(F_m(\phi^{n+1}) - F_m(\phi^n)) \right) \, \d\Omega, \\
            ND_{\textrm{wall}}(\phi^n,\phi^{n+1})   &= \int_{\partial \Omega} \left( \hat f_w(\phi^n, \phi^{n+1}) \, \delta_t \phi^{n+1} - \frac{1}{\Delta t} (F_w(\phi^{n+1}) -  F_w(\phi^{n})) \right) \, \dsurf.
    \end{aligned}
\end{equation}
Notice that the philic dissipation is always nonnegative, with
$ND_{\textrm{philic}}(\cdot, \cdot) = 0$ if $(\alpha,\beta) = (2,0)$ (OD2-W),
$ND_{\textrm{philic}}(\cdot, \cdot) = \mathcal O (\Delta t^2)$ if $(\alpha,\beta) = (2,\mathcal O(\Delta t))$ (OD2mod-W),
and $ND_{\textrm{philic}}(\cdot, \cdot) = \mathcal O (\Delta t)$ if $(\alpha,\beta) = (1,0)$ (OD1-W).
The two other terms can be expanded using Taylor's formula,
taking into account that $F_m$ is a polynomial of degree 4
and using the integral form of the remainder:
\begin{subequations}
    \begin{align}
        \nonumber
        ND_{\textrm{phobic}}(\phi^n,\phi^{n+1}) &= \int_{\Omega}\delta_t\phi^{n+1}\Big( \hat f_m(\phi^n,\phi^{n+1}) - f_m(\phi^n) - \frac{1}{2} \, \Delta t\, f_m'(\phi^n) \, \delta_t \phi^{n+1} \\
            \label{eq:expression_of_nd_phobic}
        &- \frac{1}{6} \, \Delta t^2\, f_m''(\phi^n) \, (\delta_t\phi^{n+1})^2 - \frac{1}{24} \, \Delta t^3\, f_m'''(\phi^n) \, (\delta_t\phi^{n+1})^3\Big) \, \d\Omega, \\
            \nonumber
            ND_{\textrm{wall}}(\phi^n,\phi^{n+1}) &= \int_{\partial\Omega}\delta_t\phi^{n+1}\Big(\hat f_w(\phi^n,\phi^{n+1}) - f_w(\phi^n) - \frac{1}{2} \, \Delta t\, f_w'(\phi^n) \, \delta_t \phi^{n+1} \Big) \\
            \label{eq:expression_of_nd_wall}
            &- \frac{1}{2\, \Delta t} \, \int_{\phi^n}^{\phi^{n+1}}f_w''(\phi)\,(\phi - \phi^n)^2 \, \d \phi \, \dsurf.
    \end{align}
\end{subequations}
This suggests the following choices for the functions $\hat f_m$ and $\hat f_w$:
\begin{subequations}
    \begin{align}
        \label{eq:definiton_hat_fm}
        \hat f_m(\phi^n, \phi^{n+1})
        &= f_m(\phi^n) + \frac{1}{2} \, \Delta t\, f_m'(\phi^n) \, \delta_t \phi^{n+1}
        =  \left[\frac{3}{2} (\phi^n)^2 \phi^{n+1} - \frac{1}{2} (\phi^n)^3 - \frac{\phi^{n} + \phi^{n+1}}{2}\right],  \\
        \nonumber
        \hat f_w(\phi^n, \phi^{n+1})
        &= f_w(\phi^n) + \frac{1}{2} \, \Delta t\, f_w'(\phi^n) \, \delta_t \phi^{n+1} \\
        \label{eq:definiton_hat_fw}
         &= - \frac{\sqrt{2}}{2}\cos(\theta) \times \begin{cases}
             2 + \phi^n + \phi^{n+1}
             \quad &\text{ if }\phi^n < - 1; \\
             1 - \phi^n \phi^{n+1}
             \quad &\text{ if }\phi^n \in [-1, 1] \\
             2 - \phi^n - \phi^{n+1}
             \quad &\text{ if } \phi^n > 1, \\
         \end{cases}\\
        &= - \frac{\sqrt{2}}{2}\cos(\theta) \times \big( 1 + (1 -\phi^n) \wedge 0 + (1 + \phi^n) \wedge 0 - (-1 \vee \phi^n \wedge 1)  \, \phi^{n+1} \big),
    \end{align}
\end{subequations}
where the last expression is convenient for programming purposes.
We note that
this methodology to derive a second-order scheme can be applied \emph{mutatis mutandis} when using the unmodified wall energy~\eqref{eq:cahn-hilliard:wallEnergy},
although we haven't been able to prove that the weak formulation is well-posed in that case.
Doing so leads to $\hat f_w(\phi^n, \phi^{n+1}) = -(\sqrt{2}/2) \, \cos{\theta} \, (1 - \phi^n \, \phi^{n+1})$,
which coincides with~\eqref{eq:definiton_hat_fw} when $\phi^n \in [-1, 1]$.
In either case, we have the following property:
\begin{property}
    Assume that $\alpha = 2$ and $\beta = 0$.
    Then the numerical dissipation term in~\cref{eq:energy_law_of_the_numerical_scheme} is such that
    \begin{equation}
        |ND(\phi(t^n),\phi(t^{n+1}))| \leq C \Delta t^2,
    \end{equation}
    with $C := \left(C_1 \, \|\phi\|_{C([0,T],L^{\infty}(\Omega))} \, \|\partial_t \phi\|_{C([0,T],L^3(\Omega))} + C_2 \, \| \partial_t\phi\, \|_{C([0,T],L^3(\partial \Omega))} \right)$,
    provided that all the terms in the definition of $C$ are well-defined.
\end{property}
\begin{proof}
In \cite{guillen2013linear}, the authors show that:
\begin{equation}
    \begin{aligned}
        &ND_{\textrm{philic}}(\cdot,\cdot) = ND_{\textrm{philic}}(\phi^n,\phi^{n+1}) = ND_{\textrm{philic}}(\phi(t^n),\phi(t^{n+1})) = 0; \\
        & \abs{ND_{\textrm{phobic}}(\phi(t^n),\phi(t^{n+1}))} \leq \Delta t^2 \, \left(C_1 \, \|\phi\|_{C([0,T],L^{\infty}(\Omega))} \, \|\partial_t \phi\|^3_{C([0,T],L^3(\Omega))}\right).
    \end{aligned}
\end{equation}
For the wall term, we obtain from \cref{eq:expression_of_nd_wall,eq:definiton_hat_fw}:
\begin{equation}
    \begin{aligned}
        \abs{ND_{\textrm{wall}}(\phi(t_n),\phi(t_{n+1})} &\leq C_2 \, \Delta t^2 \int_{\partial\Omega} \abs{\delta_t\phi^{n+1}}^3  \dsurf \\
                                              &\leq C_2 \, \Delta t^2 \, \| \partial_t\phi \|^3_{C([0,T],L^3(\partial \Omega))}.
    \end{aligned}
\end{equation}
\end{proof}


In addition to the energy law~\eqref{eq:energy_law_of_the_numerical_scheme},
the numerical scheme \eqref{VF} satisfies a discrete version of the conservation law \eqref{Mlaw} presented in \cref{sec:mathematical_model},
which can be seen by choosing $\psi = 1$ in \cref{VF_phi}.

\begin{property}
    The numerical solution satisfies the following mass conservation law:
\begin{equation}
    \int_{\Omega} \phi^{n} \, \d\Omega = \int_{\Omega} \phi^0 \, \d\Omega
\quad \text{ for } n = 0, 1, 2, \dots
\end{equation}
\end{property}

\subsubsection{Space discretization and adaptive mesh refinement}
Our approach for mesh adaptation is based on a method proposed in \cite{meshAdaptation},
and implemented through the FreeFem++ functions \emph{adaptmesh} (in 2D) and \emph{mshmet} (in 3D).
The idea of the method is to define a metric on the computational domain based on the solution at the current time step,
and to use for the next time step a mesh that is uniform in that metric.
The metric we consider corresponds to the following metric tensor,
depending only on the phase field $\phi$:
\begin{equation}
    G(\vect x) = R(\vect x)\, \text{diag}(\tilde \lambda_i(\vect x))\, R(\vect x)^T, \quad
    \tilde{\lambda}_i(\vect x) = \min \left( \max \left( \frac{1}{\gamma} |\lambda_i(\vect x)|,\frac{1}{h^2_{\mathrm{max}}}\right),\frac{1}{h^2_{\mathrm{min}}}\right),
\end{equation}
where $(\lambda_i(\vect x))_{i = 1}^d$ are the eigenvalues of the Hessian of $\phi$ at $\vect x$,
$R(\vect x)$ is the matrix containing the associated orthonormal eigenvectors,
and $\gamma > 0$ is a parameter controlling the interpolation error.
A standard algorithm of Delaunay type is used to generate a mesh that is equilateral and uniform with characteristic length 1 in that metric.
This mesh definition ensures that the interpolation error of the phase field is roughly equi-distributed over the parts of the domain
where $ h_{\max}^{-2} \leq \frac{1}{\gamma} \max_{i = 1\rightarrow d} \abs{\lambda_i} \leq h_{\min}^{-2}$.

In most of the simulations presented in the next section,
we set $h_{\min}$ to a value lower than or equal to $\varepsilon/5$,
to ensure that enough mesh points are available for the discretization of the interface region in its normal direction,
and $h_{\max}$ to a value small enough that a good approximation of the chemical potential is possible.
For 3D simulations, however,
choosing $h_{\min} \leq \varepsilon / 5$ when $\varepsilon$ is of the order of $0.01$ leads to a prohibitive computational cost;
in these cases we have thus used a less precise value, as specified in the relevant sections.

For a given mesh $\mbox{$\cal{T}$} = \bigcup_{i=1}^{N_T} T_i$,
we use the standard finite element space
\begin{equation}
    \begin{aligned}
        & V_h = \{\phi \in  C(\Omega): \phi_{|T_i} \in P_\rho \text{~for~} i = 1, \dots, N_T \}, \\
    \end{aligned}
\end{equation}
with $P_\rho$ the space of polynomials of degree $\rho$.
In the numerical experiments below,
we used both quadratic elements ($\rho = 2$) and linear ones ($\rho = 1$).
Space discretization is achieved by replacing $H^1(\Omega)$ by $V_h$ in the
variational formulation~\eqref{VF}, leading to a sparse unsymmetric linear system at each iteration.

\subsubsection{Time step adaptation}

Here we assume that $\dot m = 0$ in the boundary condition~\eqref{eq:summary_system_boundary_condition_mu}.
From~\cref{Mlaw,Elaw}, this implies that $M(\phi)$ is constant in time and $E(\phi)$ decreases.
Numerical exploration suggests that large free-energy variations are
usually caused by topological changes of interfaces, corresponding to physical phenomena such as the coalescence of droplets.
Since capturing such phenomena precisely is crucial to the accuracy of the solution,
we propose an adaptive strategy aimed at limiting the variation of
free energy at each time step.
We adapt the time step based on the dissipation of free energy,
\[
    \Delta E^{n+1} := - \Delta t^{n+1} \, b \, \|\grad \mu^{n+\frac{1}{\alpha}}\|_{L^2}^2,
\]
which is equal to $E(\phi^{n+1}) - E(\phi^n)$ up to numerical dissipation.
Here $\Delta t^{n+1} := t^{n+1} - t^n$.
Five parameters enter in our time-adaptation scheme:
\begin{itemize}
    \item $\Delta t_{\min}, \Delta t_{\max}$: the time steps below which we stop refining and beyond which we stop coarsening, respectively.
    \item $\Delta E_{\min}$: the variation of free energy below which we increase the time step at the next iteration.
    \item $\Delta E_{\max}$: the variation of free energy beyond which we refine the time step and recalculate the numerical solution.
    \item $f > 1$: the factor by which the time step is multiplied or divided at each adaptation.
\end{itemize}

\begin{algorithm}[H]
    \KwData{$\Delta t_{\min}, \Delta t_{\max}, \Delta E_{\min}, \Delta E_{\max}, f, \phi^n, \Delta t^{n+1}$}
  Compute a solution $(\phi^*,\mu^*)$ of (\ref{VF}) using time step $\Delta t^{n+1}$ \;
  Compute $\abs{\Delta^* E} :=  \Delta t^{n+1} \, b \, \|\grad \mu^{*}  \|_{L^2}^2$ \;
  \eIf{$(\abs{\Delta^* E}  > \Delta E_{\max} \land \Delta t^{n} > \Delta t_{\min}) \lor (E(\phi^*) - E(\phi^n) > \Delta E_{\max}/100)$}{
      Set $\Delta t^{n+1} = \frac{\Delta t^{n+1}}{f}$ and go back to 1\;
    }{
      $\phi^{n+1} = \phi^*$ \;
      \If {$(\abs{\Delta^* E} <  \Delta E_{\min} \land \Delta t^{n+1}< \Delta t_{\max})$}
      {$\Delta t^{n+2} = f \Delta t^{n+1}$ \;}
    }
    $n = n + 1$ and go back to 1.
    \caption{Time step adaptation}
    \label{algorithm:timeStepAlgo}
\end{algorithm}
The condition $(E(\phi^*) - E(\phi^n)) > \Delta E_{\max}/100$ serves to guarantee that the method does not blow up.
The choice of a nonzero right-hand side is motivated by the fact that,
when the system is close to equilibrium,
it can happen that $E(\phi^*) > E(\phi^n)$.
This is because,
in contrast with the sign of $ND_{\textrm{philic}}(\phi^n, \phi^{n+1})$,
which is always positive or zero according to~\cref{eq:numerical_dissipation},
the signs of $ND_{\textrm{phobic}}(\phi^n, \phi^{n+1})$ and $ND_{\textrm{wall}}(\phi^n, \phi^{n+1})$ are in general unknown.

In the numerical experiments presented in \cref{sec:simulations},
we chose $\Delta t_{\min} = 0$.
Since the numerical dissipation term scales as $\Delta t^2$,
the inequality $E(\phi^{n+1}) \leq E(\phi^n) + \Delta E_{\max}/100$ will always hold for $\Delta t$ small enough,
so the refinement process is guaranteed to terminate at each iteration.

\section{Numerical results}
\label{sec:simulations}

The new numerical method is applied on a number of test cases. For the
implementation, we have used \emph{FreeFem++}~\cite{hecht2012new} for the
implementation of the finite-element method and 2D mesh adaptation,
\emph{umfpack}~\cite{davisUMFPACK} for the linear solver,
\emph{mshmet}~\cite{mshmet} and \emph{tetgen}~\cite{si2015tetgen} for the
mesh adaptation in 3D, and \emph{gmsh}~\cite{geuzaine2009gmsh} for the
description of the geometry, post-processing and 3D visualisation.
In \cref{sub:equilibrium_contact_angle} we check that the numerical scheme leads
to the correct equilibrium solution in the simple case of a droplet spreading
on a philic or phobic substrate. In \cref{sub:convergence_of_the_method} we
study the convergence of the method with respect to the time step and the
mesh size, when a uniform mesh and a constant time step are used. In
\cref{sub:convergence_of_the_method} we illustrate the time-adaptation scheme
in the case of two droplets coalescing on a substrate. Finally,
\cref{sub:wetting_in_complex_geometries_and_with_heterogeneous_substrates}
demonstrates the ability of the numerical scheme to scrutinise wetting
phenomena in more complicated geometries, and in the presence of
heterogeneous substrates.
The code used for the simulations is available online, see Ref.~\cite{gitcode}.

\subsection{Equilibrium contact angle}
\label{sub:equilibrium_contact_angle}

We consider a 2D sessile droplet on a flat substrate where we impose the
no-flux condition and the wetting condition~\eqref{eq:wettingBC}
with the modified wall energy~\eqref{eq:modified_wall_energy} and uniform contact angle $\theta$:
\begin{equation}
    \grad \mu \cdot \vect n = 0, \quad \varepsilon \grad \phi \cdot \vect{n} = - f_w(\phi)
    \label{eq:test1_boundary_condition}
\end{equation}
Our aim in this section is to check that our method is able to accurately capture the imposed contact angle, $\theta$.
\Cref{fig:contact_angle_test} shows the equilibrium position of a droplet for different values of $\theta$,
for $b = 1$ and $\varepsilon = 5 \times 10^{-3}$.
In all cases we used the scheme OD2-W with adaptation in space using the parameters $h_{\max} = 10 \, h_{\min} = 0.01$,
and we computed the contact angle of the $\phi = 0$ isoline at the substrate.
A very good agreement is achieved between the imposed equilibrium contact angle and the observed numerical one.

\begin{figure}
    \centering {%
        \begin{minipage}[b]{.33\linewidth}
            \centering
            \includegraphics[width=\textwidth]{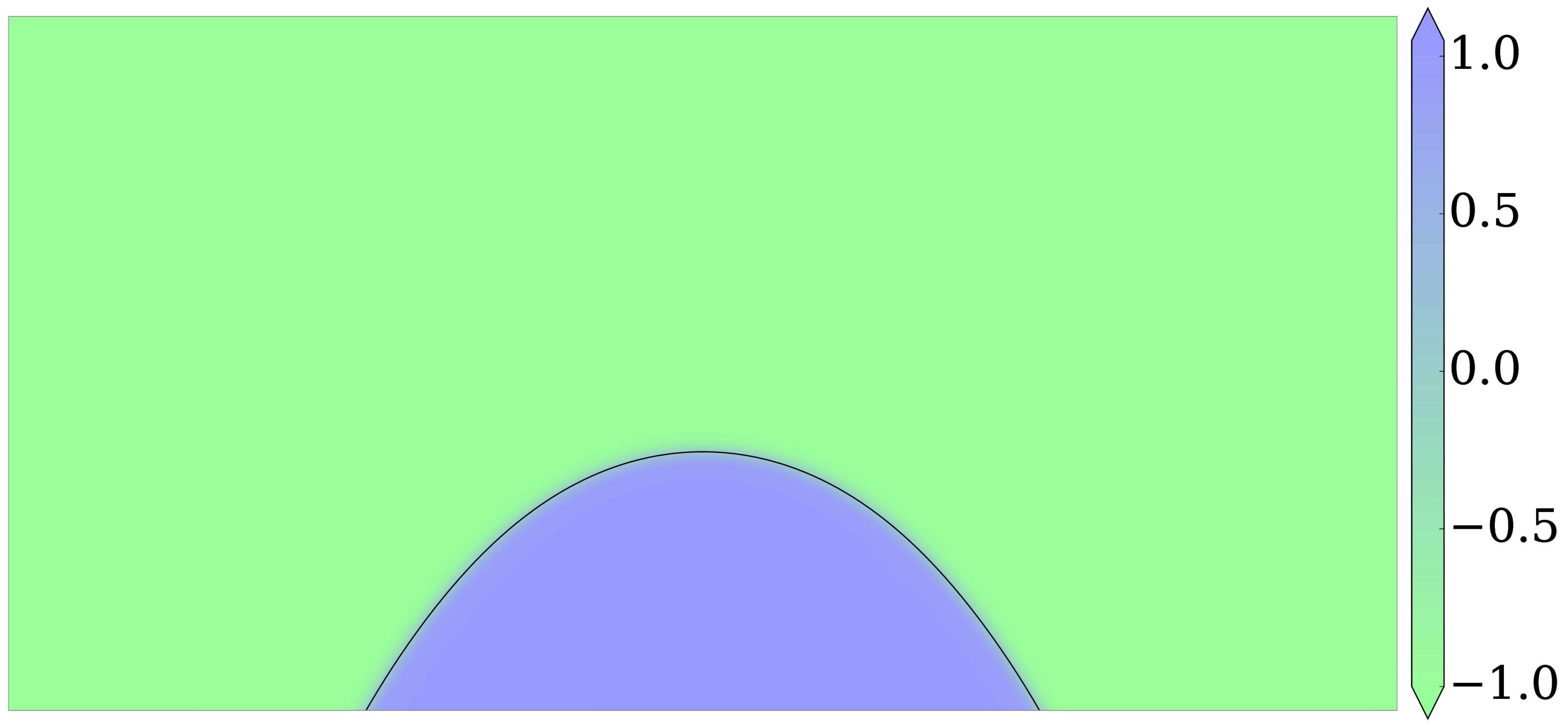}
            \subcaption{$\theta = \pi/3$, $\theta^*/\theta = 0.991$.}
        \end{minipage}%
        \begin{minipage}[b]{.33\linewidth}
            \centering
            \includegraphics[width=\textwidth]{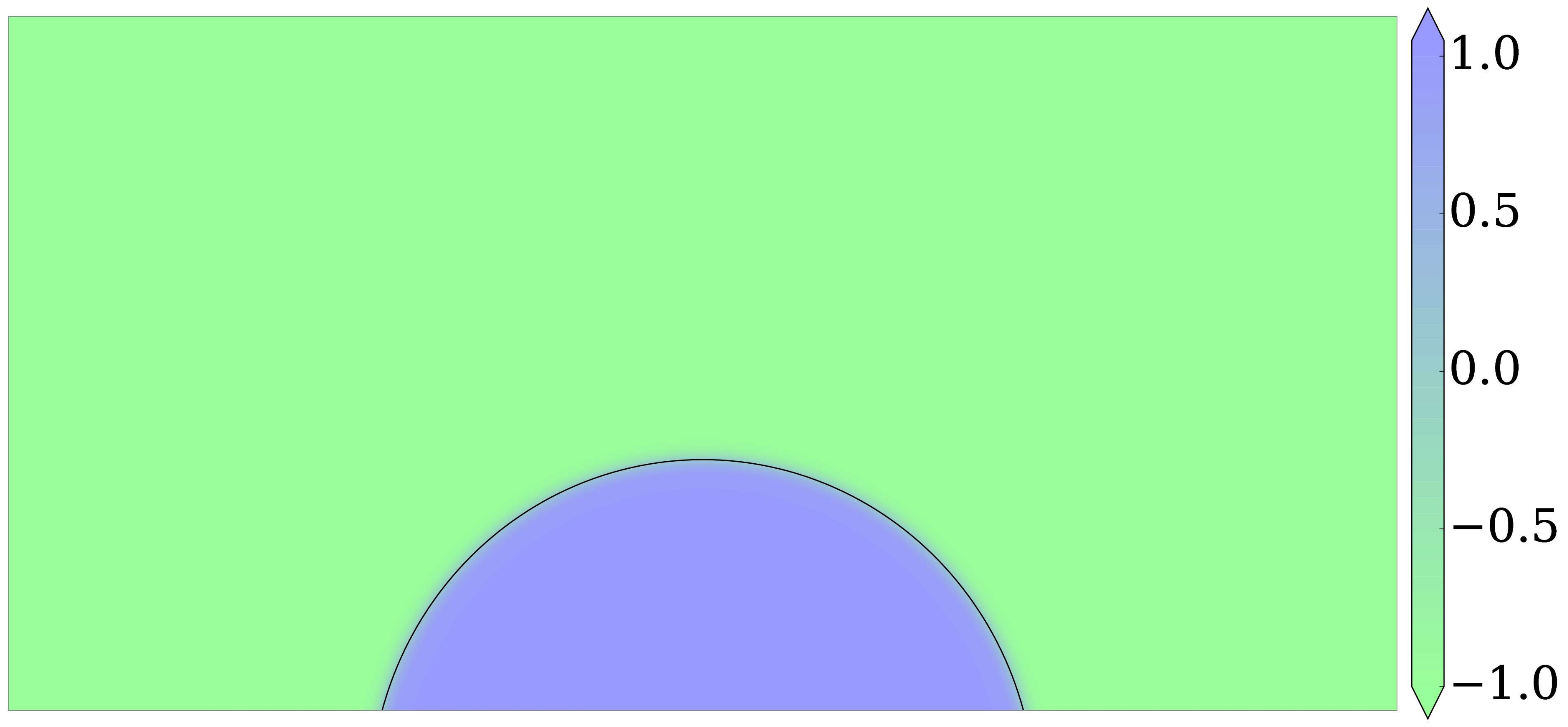}
            \subcaption{$\theta = 5 \pi /12$, $\theta^*/\theta = 0.985$.}
        \end{minipage}%
        \begin{minipage}[b]{.33\linewidth}
            \centering
            \includegraphics[width=\textwidth]{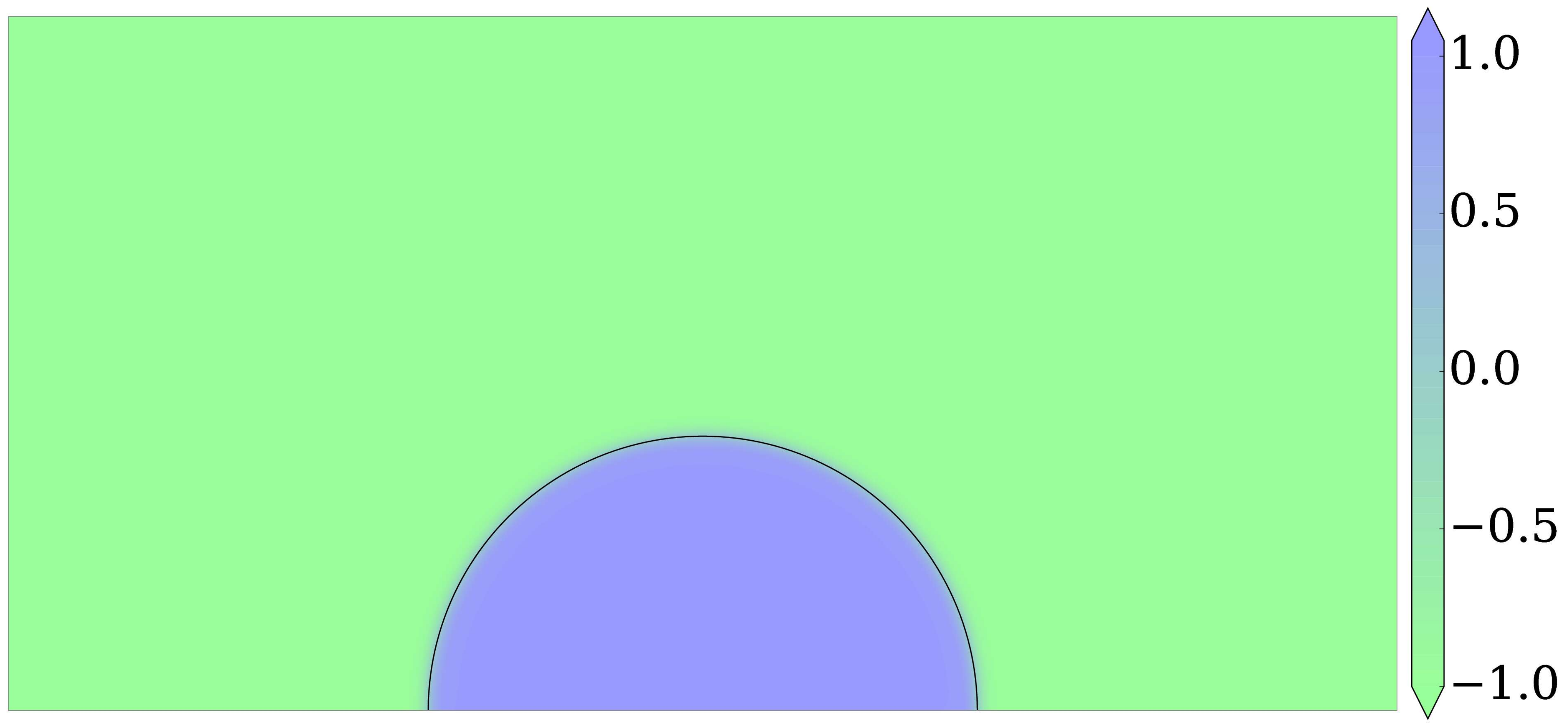}
            \subcaption{$\theta = \pi/2$, $\theta^*/\theta = 0.980$.}
        \end{minipage}%

        \begin{minipage}[b]{.33\linewidth}
            \centering
            \includegraphics[width=\textwidth]{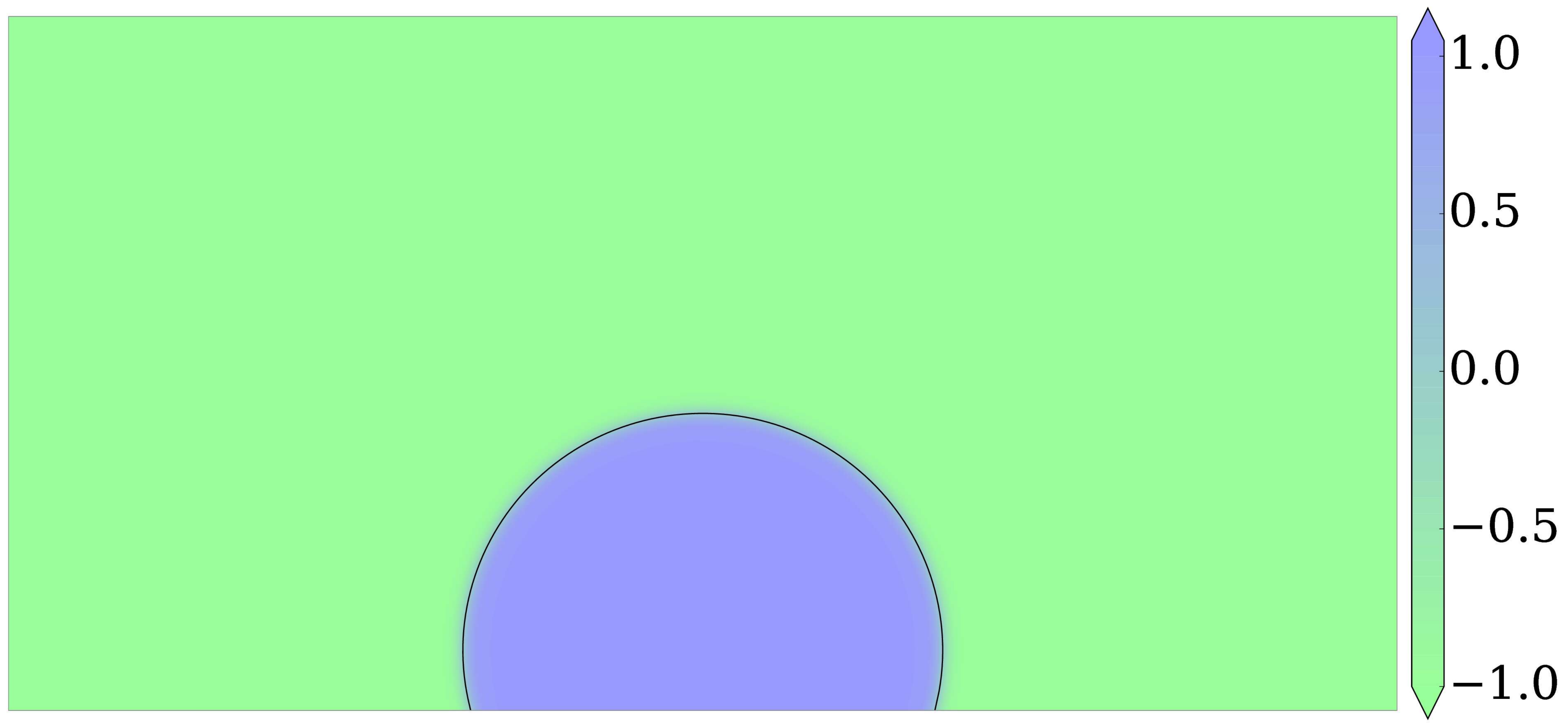}
            \subcaption{$\theta = 7\pi/12$, $\theta^*/\theta = 0.979$.}
        \end{minipage}%
        \begin{minipage}[b]{.33\linewidth}
            \centering
            \includegraphics[width=\textwidth]{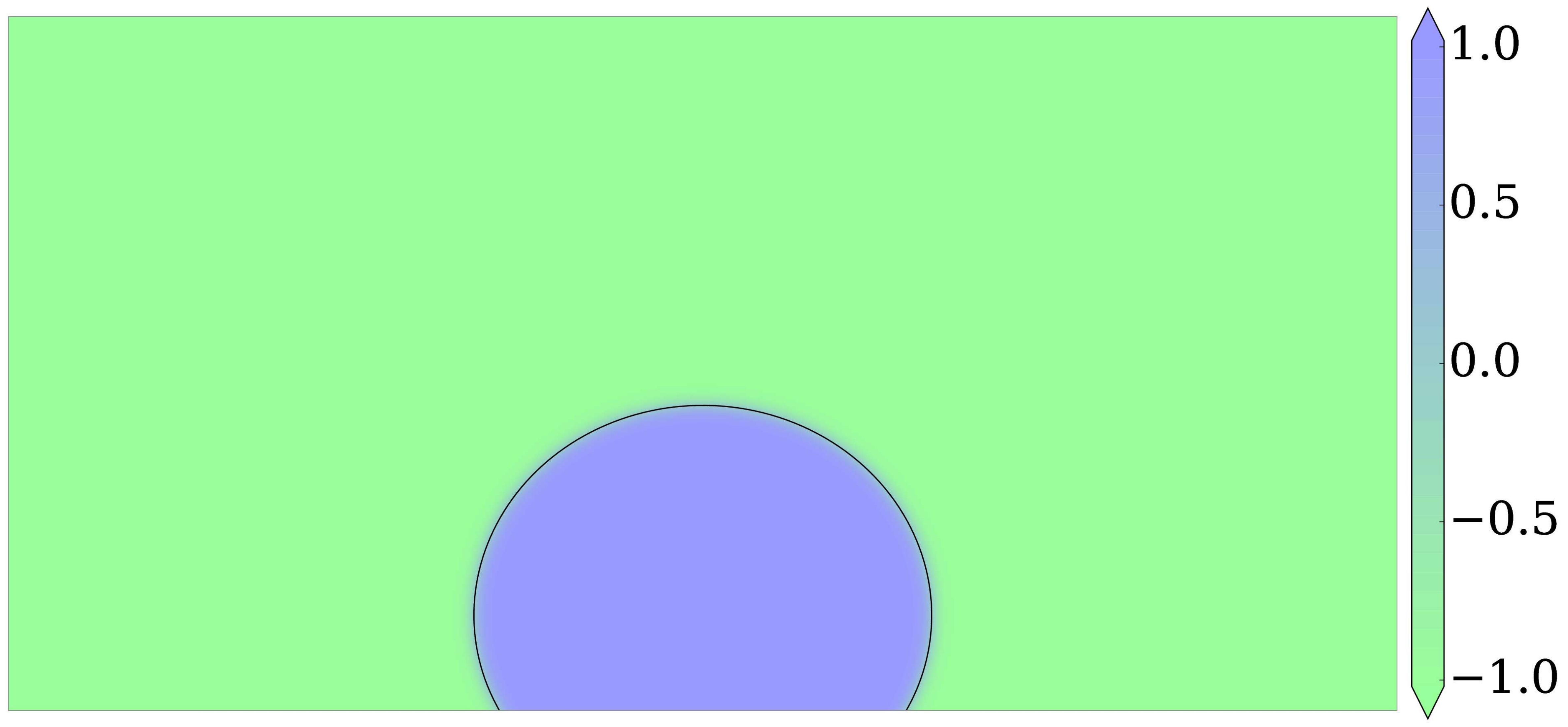}
            \subcaption{$\theta = 2\pi/3$, $\theta^*/\theta = 0.978$.}
        \end{minipage}%

        \caption{%
            Equilibrium position of a droplet on substrates with different wetting properties.
            In all cases, $\theta$ denotes the angle appearing in boundary condition~\eqref{eq:test1_boundary_condition}
            and $\theta^*$ denotes the angle calculated numerically.
            Blue corresponds to $\phi=1$ and green to $\phi=-1$.%
        }\label{fig:contact_angle_test}
    }
\end{figure}

\subsection{Convergence of the method}
\label{sub:convergence_of_the_method} Here, we study the convergence of the
method when both time step and mesh size decrease. The problem we considered
to that purpose is the coalescence of two adjacent sessile droplets as they
spread on a flat substrate.
For the simulation,
we used the initial condition
\begin{equation}
    \label{eq:initial_condition_test_convergence}
    \phi(x,0) = 1 -\tanh\left(\frac{\sqrt{(x-x_1)^2 + y^2} - r}{\sqrt 2 \varepsilon}\right) - \tanh\left(\frac{\sqrt{(x-x_2)^2 + y^2} - r}{\sqrt 2 \varepsilon}\right),
\end{equation}
in the domain $[0,2] \times [0, 0.5]$, with $x_1 = 0.65$, $x_2 = 1.35$, $r = 0.25$,
and at the boundary we imposed a uniform contact angle, $\theta = \pi/4$,
using the wall energy~\eqref{eq:modified_wall_energy}.
Only linear elements were used.

For the convergence as $h \to 0$, we solved the problem numerically for
several values of $h$, without mesh adaptation and for $\varepsilon = 0.1$,
so that enough data points could be generated at a reasonable numerical cost.
Since the exact solution to the CH equation in this case is not known
analytically, we calculated the error by comparison of the numerical
solutions to the solution obtained with the smallest value of $h$. Results
are presented in \cref{fig:convergence_mesh_size_without_adaptation}. As we
can see, the observed convergence rate is almost equal to 2, which is the
optimal rate in the case of linear basis functions.

\begin{figure}
    \centering {%
        \begin{minipage}[b]{.5\linewidth}
            \includegraphics[width=\textwidth]{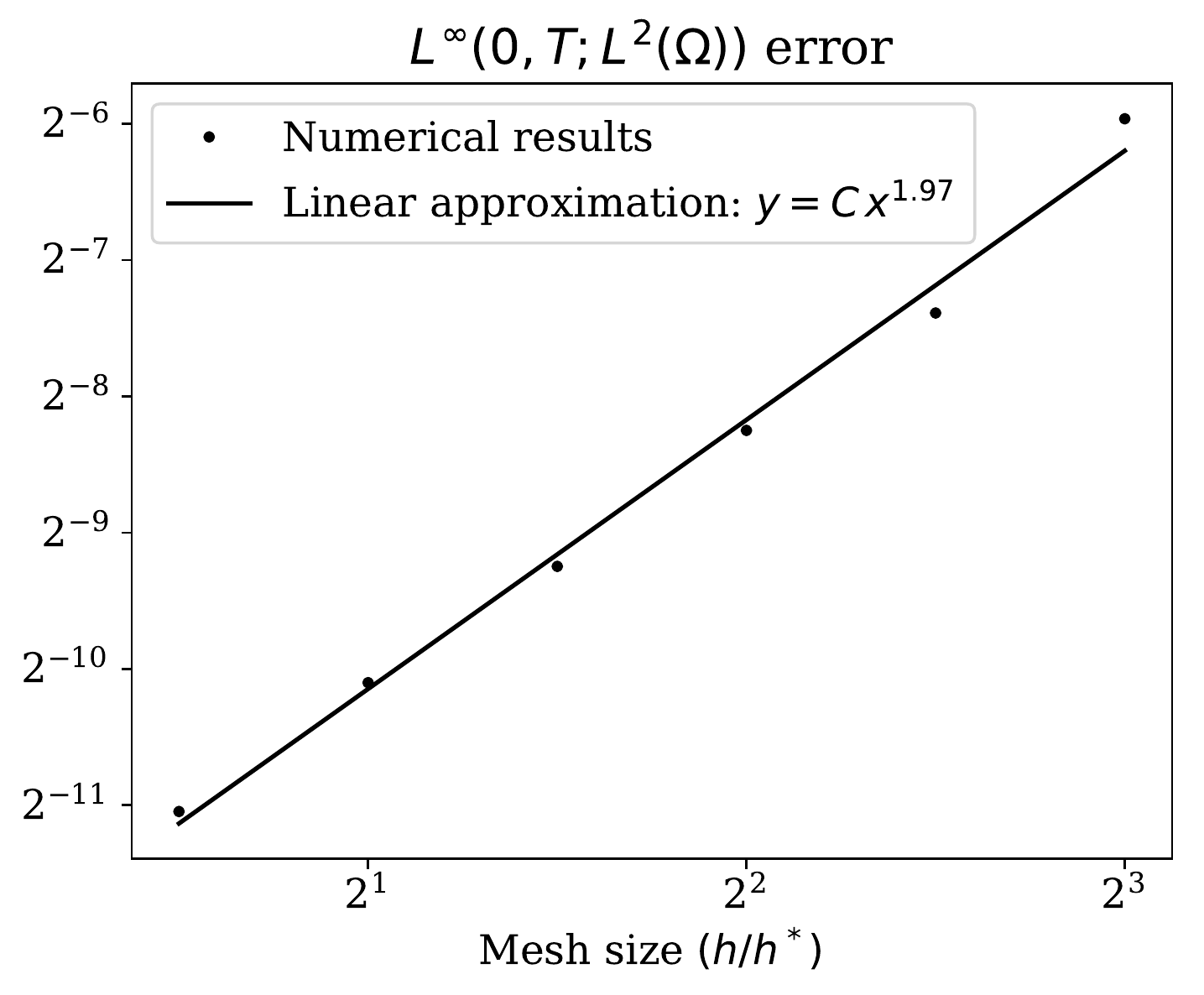}
        \end{minipage}%
        \caption{Convergence of OD2-W with respect to the mesh size, without mesh adaptation.
        In this case, $h$ corresponds to the uniform mesh size, given as input to the mesh generator of \emph{gmsh}, and $h^* = 0.01$.}
        \label{fig:convergence_mesh_size_without_adaptation}
    }
\end{figure}

Now we address the convergence with respect to the time step. For this case,
we used the parameters $\varepsilon = 0.1$, $b=10^4$, and the minimum time
step we considered was $\Delta t^* := 0.00665.$ In
\cref{fig:convergence_time_step_without_adaptation}, we present convergence
curves for OD1-W, OD2-W, and OD2mod-W. We note that the convergence rates are
close to the expected ones, and that the use of OD2 gives significantly more
accurate results than the other two methods. In
\cref{fig:convergence_time_numerical_dissipation}, the total numerical
dissipation produced by the numerical schemes is presented. Here too,
numerical results agree with the theoretical results of
\cref{sec:numerical_method_cahn-hilliard}.

\begin{figure}
    \centering {%
        \begin{minipage}[b]{.32\linewidth}
            \includegraphics[width=\textwidth]{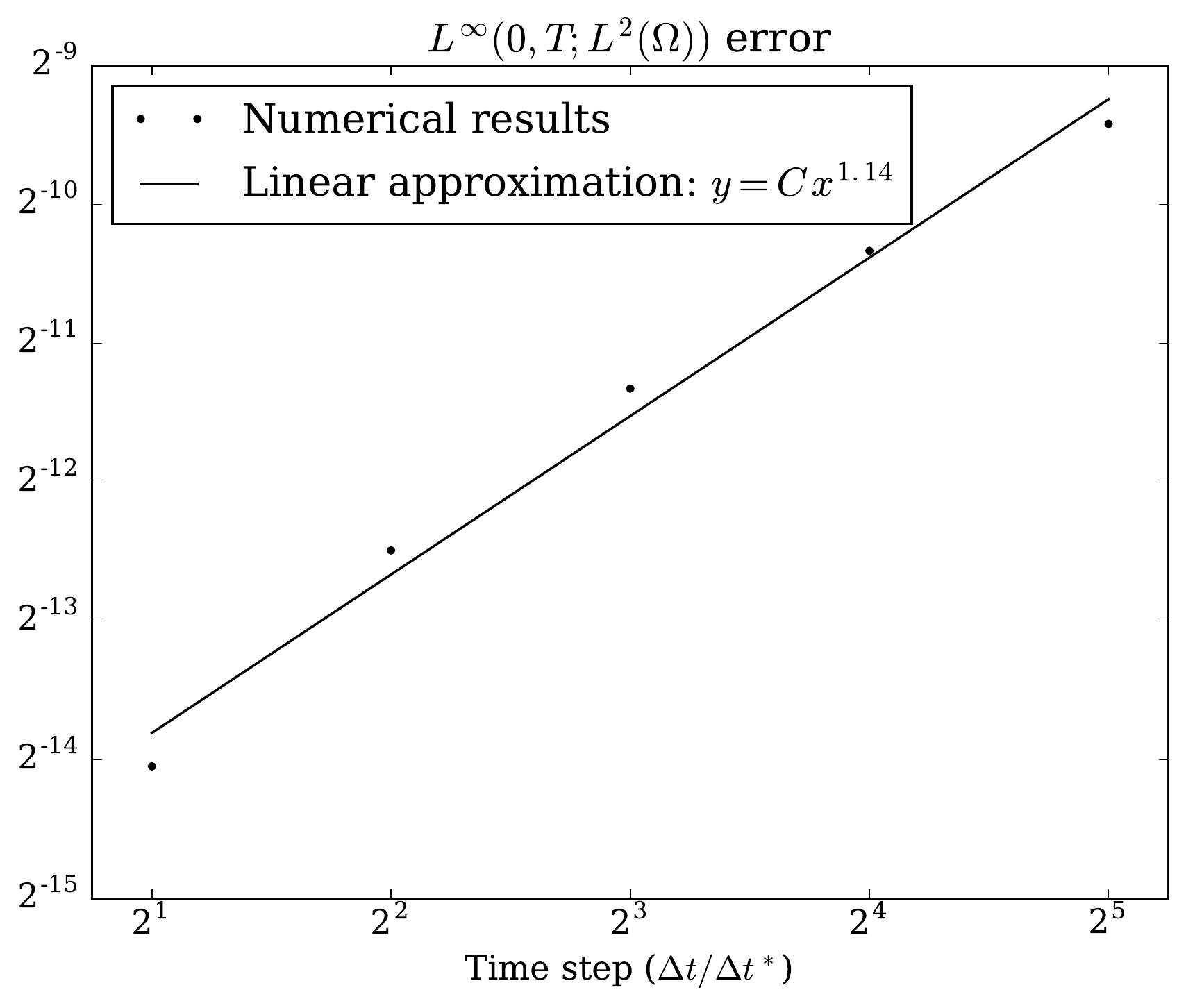}
            \subcaption{OD1-W}
        \end{minipage}%
        \begin{minipage}[b]{.32\linewidth}
            \includegraphics[width=\textwidth]{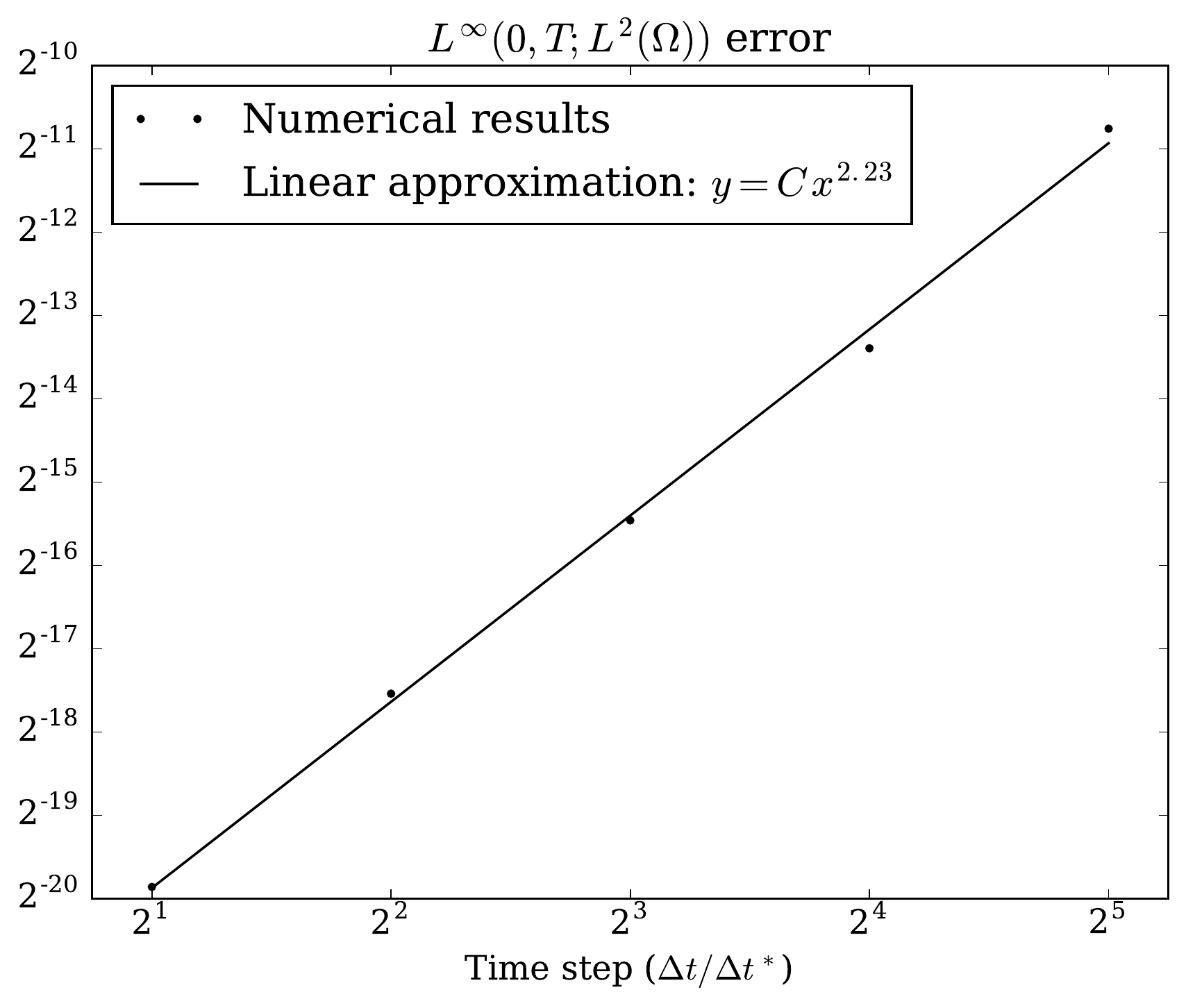}
            \subcaption{OD2-W}
        \end{minipage}%
        \begin{minipage}[b]{.32\linewidth}
            \includegraphics[width=\textwidth]{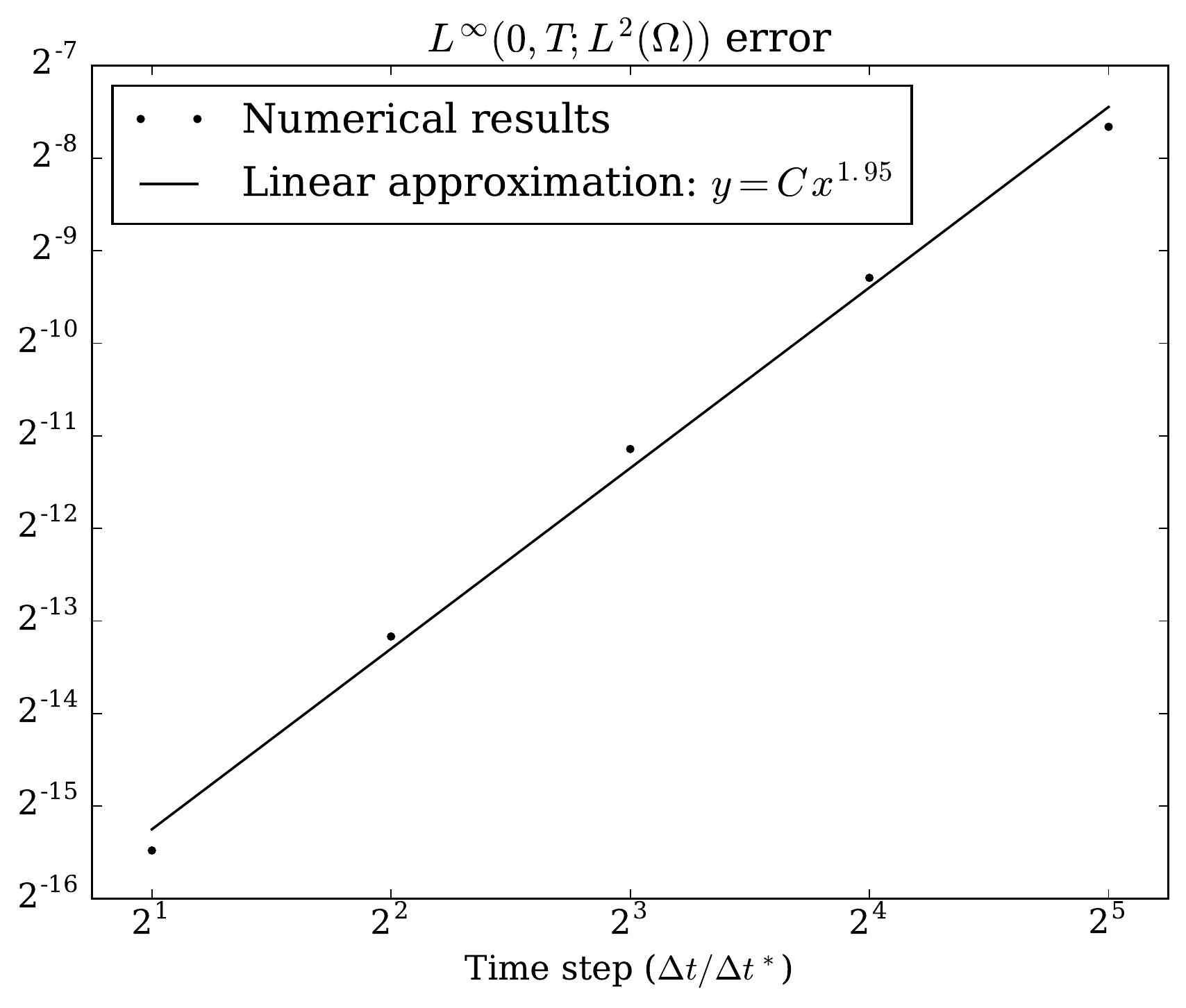}
            \subcaption{OD2mod-W with $\beta = 10 \, \Delta t$}
        \end{minipage}%
        \caption{%
            Convergence of the numerical method with respect to the time step, without mesh adaptation.
            In the case of OD1-W, the rate of convergence is close to the expected value of $1$.
            In the other two cases, the rate of convergence is close to the expected value of $2$.
        }
        \label{fig:convergence_time_step_without_adaptation}
    }
\end{figure}

\begin{figure}
    \centering {%
        \begin{minipage}[b]{.32\linewidth}
            \includegraphics[width=\textwidth]{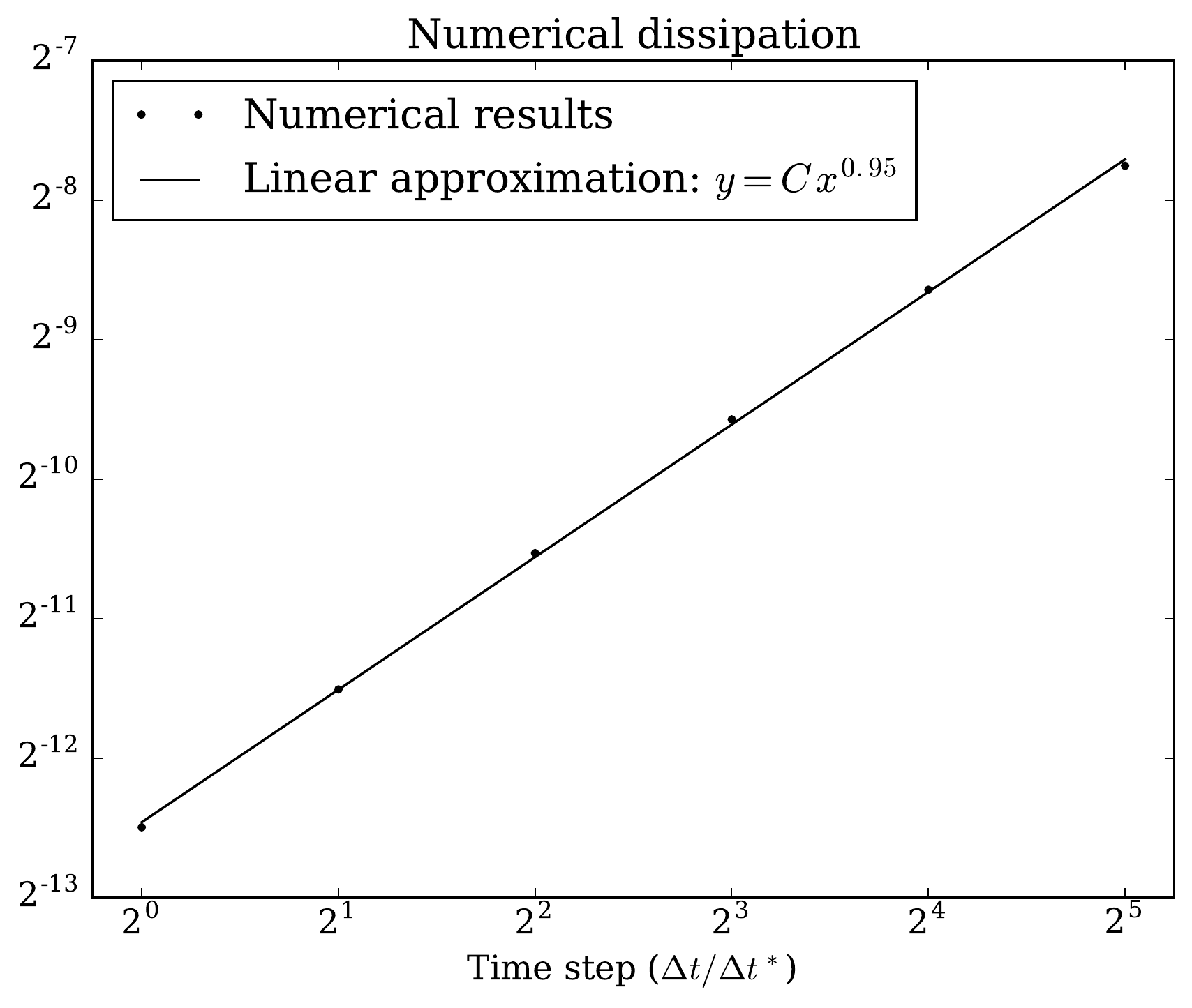}
            \subcaption{OD1-W}
        \end{minipage}%
        \begin{minipage}[b]{.32\linewidth}
            \includegraphics[width=\textwidth]{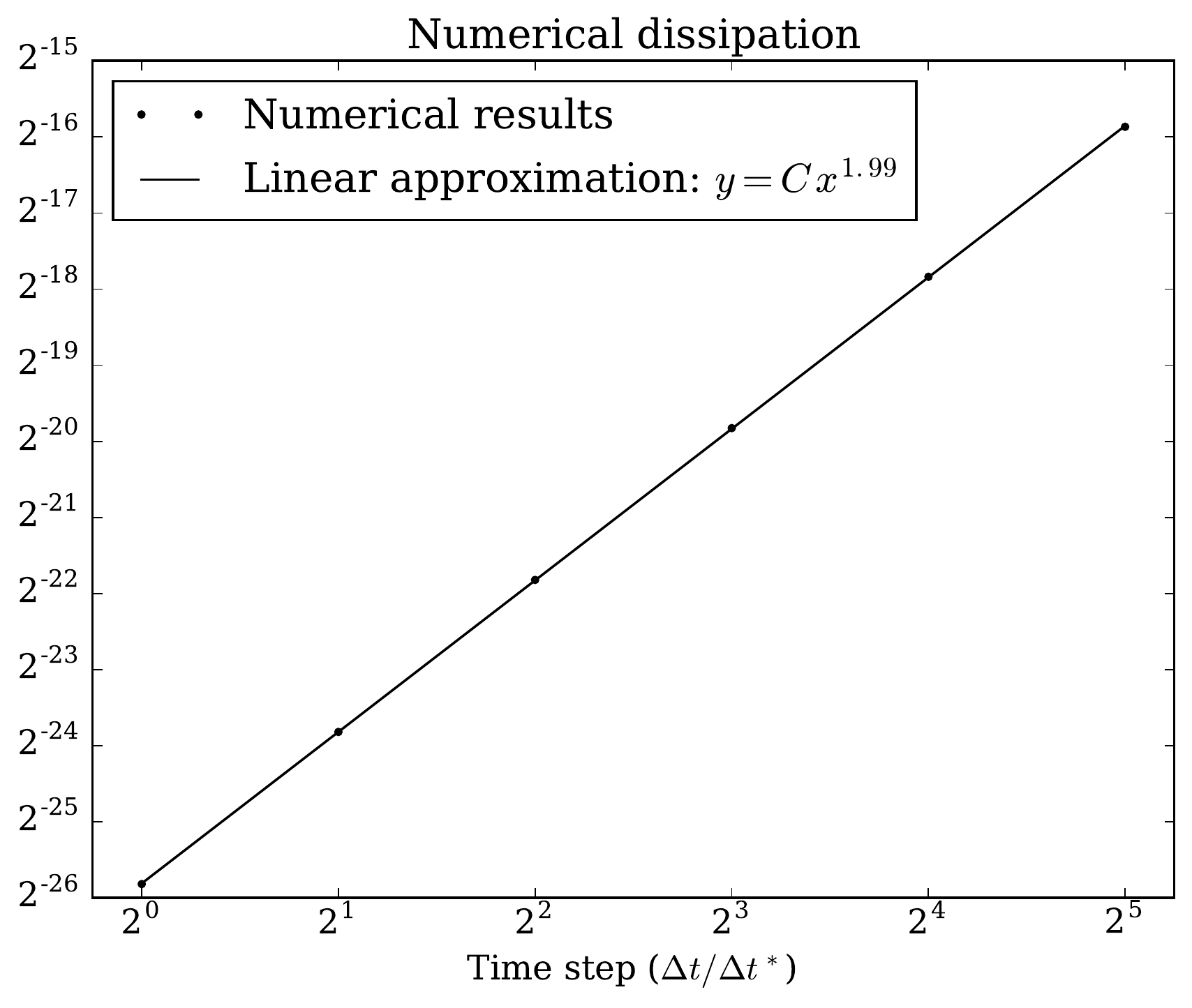}
            \subcaption{OD2-W}
        \end{minipage}%
        \begin{minipage}[b]{.32\linewidth}
            \includegraphics[width=\textwidth]{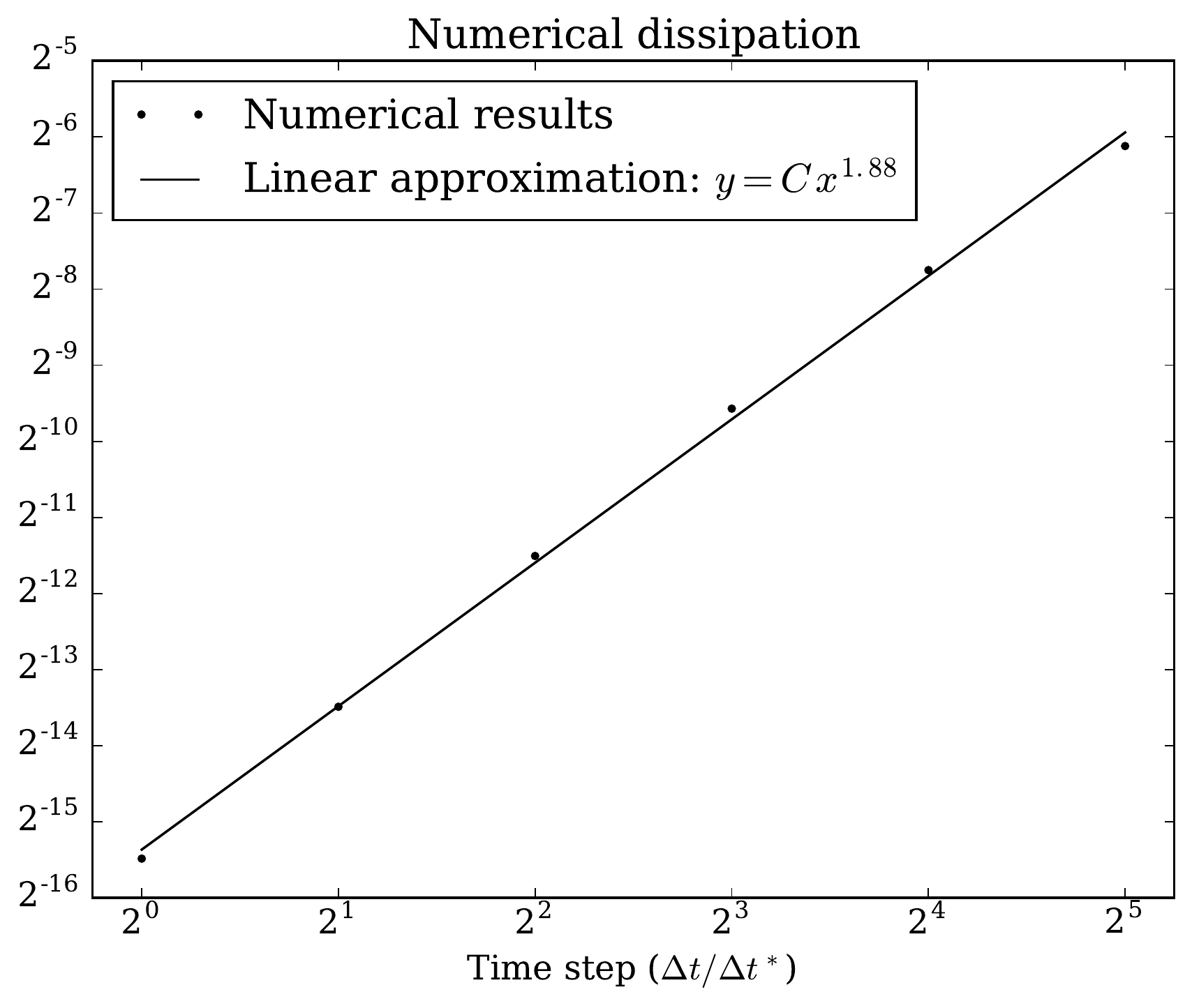}
            \subcaption{OD2mod-W with $\beta = 10 \, \Delta t$}
        \end{minipage}%
        \caption{%
            Total numerical dissipation generated by the numerical schemes in the simulation used to produce~\cref{fig:convergence_time_step_without_adaptation}.
            OD2-W is by far the scheme producing the least numerical dissipation, even for relatively large time steps.
            OD2mod-W, on the other hand, introduces significant numerical dissipation for large time steps,
            owing to the large value of $\beta$ that was chosen for the simulation,
            but is much less dissipative than OD1-W for small time steps.
        }
        \label{fig:convergence_time_numerical_dissipation}
    }
\end{figure}

\subsection{Time-adaptation scheme}%
\label{sub:time_adaptation_scheme}

In this section, we examine the performance of the adaptive time-stepping scheme in the case of two droplets evolving on a chemically homogeneous substrate.
We start from the situation where $\phi = -1$ everywhere except in two half-circles,
of radius $r = 0.25$ and centred at $(0.65,0)$ and $(1.35,0)$, where $\phi = 1$.
We used the following parameters:
$b=10^{-4}$,
$\varepsilon=0.01$,
$f = \sqrt{2}$,
$\Delta t_0 = 0.02$,
$\Delta t_{\min} = 0$,
$\Delta t_{\max} = 16\Delta t_0$,
$\Delta E_{\min} = 0.0001$,
$\Delta E_{\max} = 0.0002$,
$h_{\max}=0.05$
$h_{\min}=0.001$,
and for $\theta$ we considered three values: $\pi/4$, $\pi/2$, $3\pi/4$.

Snapshots of the phase field and of the chemical potential at different times of the simulation are presented
in~\cref{fig:coalescence_pio4,fig:coalescence_3pio4}
for the case $\theta = \pi/4$ and $\theta = 3\pi/4$, respectively.
The case $\theta = \pi/2$ is less interesting because,
in view of the initial condition,
the droplets remain essentially motionless throughout the simulation;
we do not present snapshots of the solution in that case.

\begin{figure}
  \begin{center}
    \includegraphics[width=0.4\textwidth]{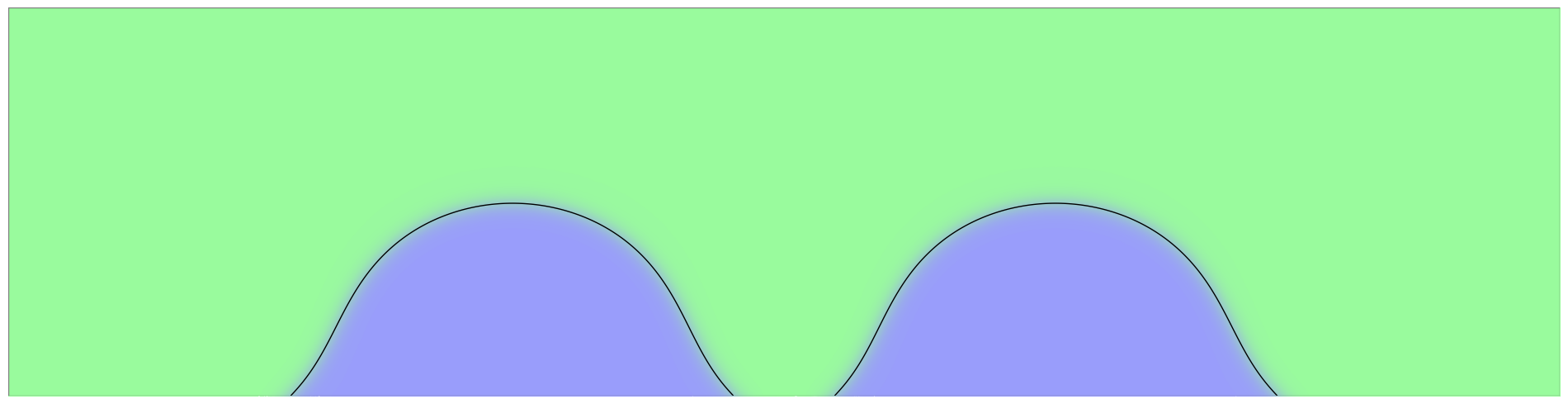} \includegraphics[width=0.4\textwidth]{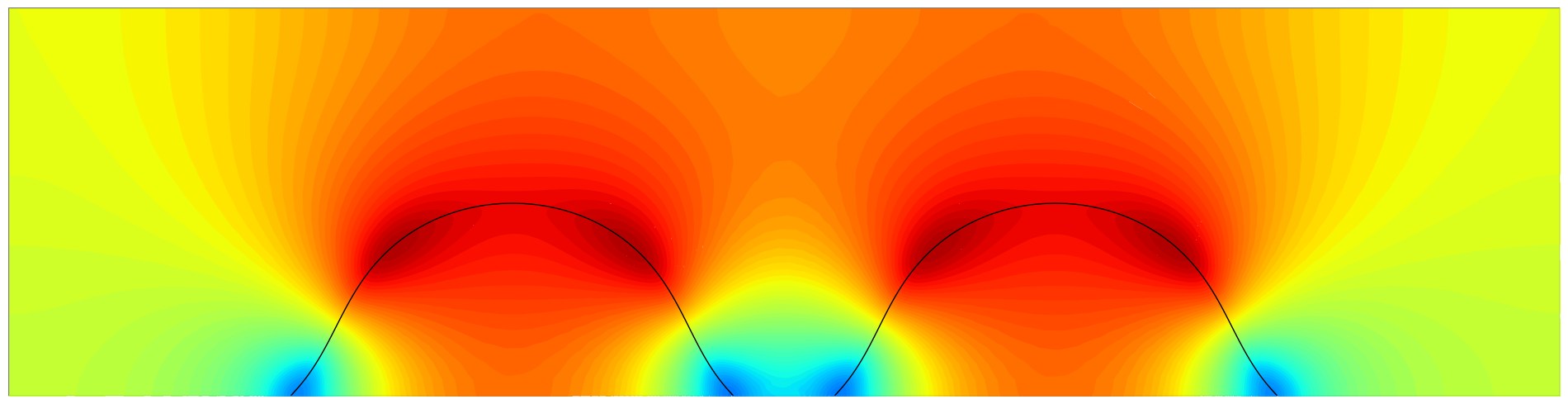}
    \includegraphics[width=0.4\textwidth]{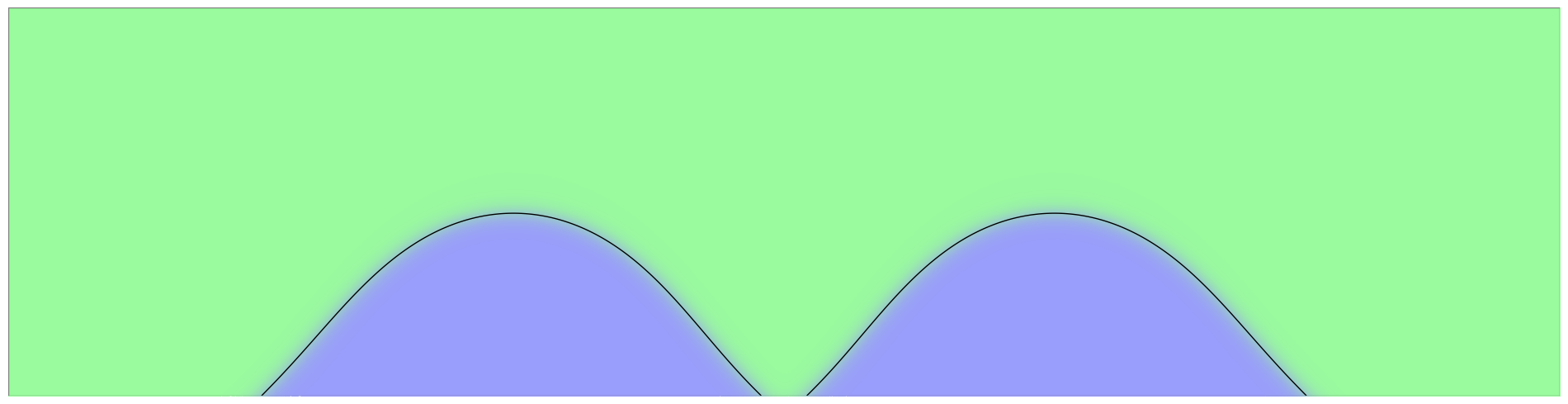} \includegraphics[width=0.4\textwidth]{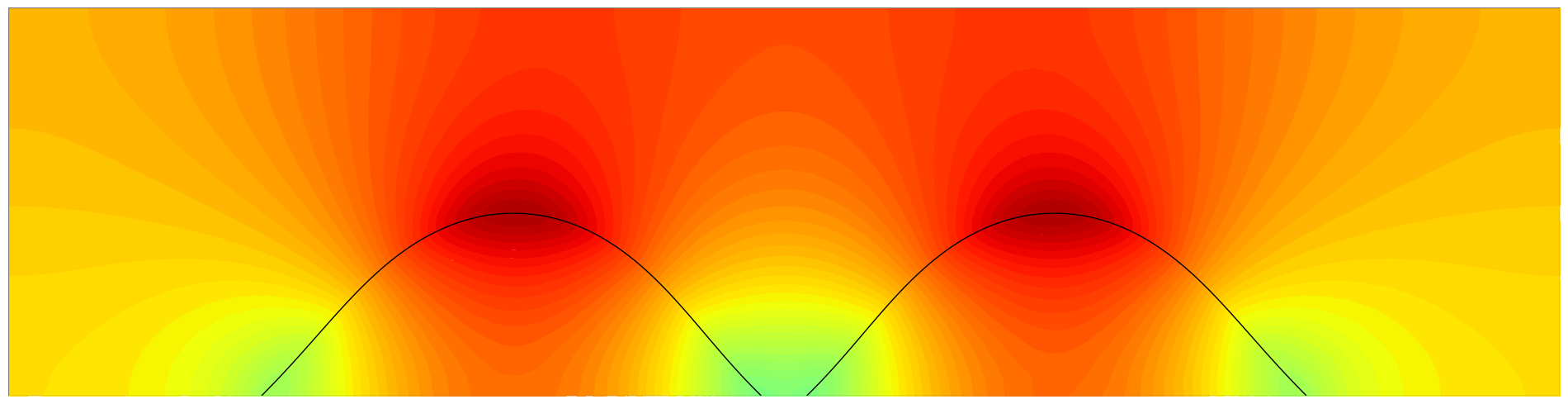}
    \includegraphics[width=0.4\textwidth]{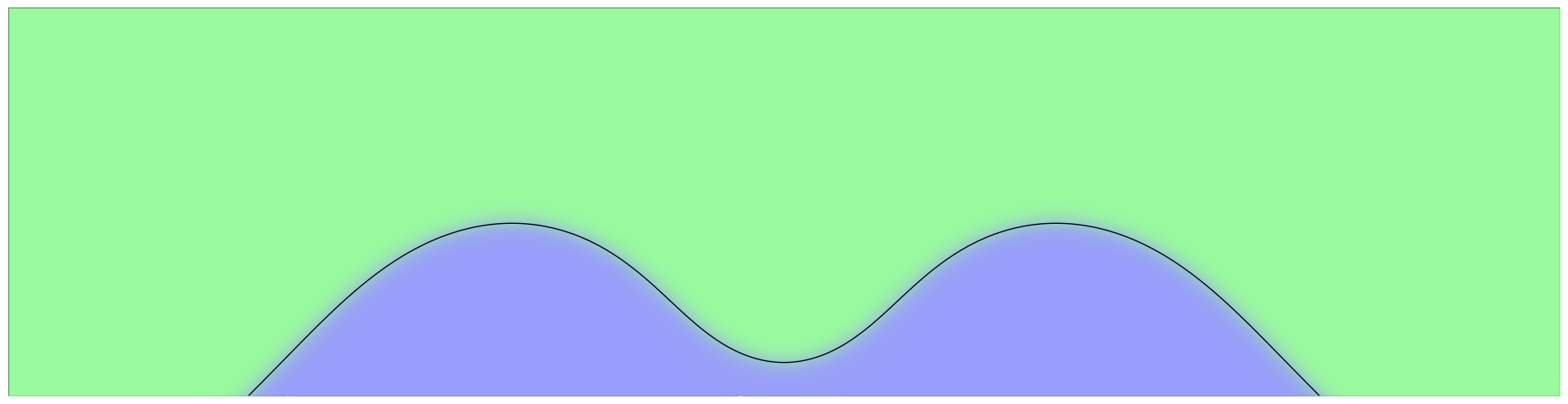} \includegraphics[width=0.4\textwidth]{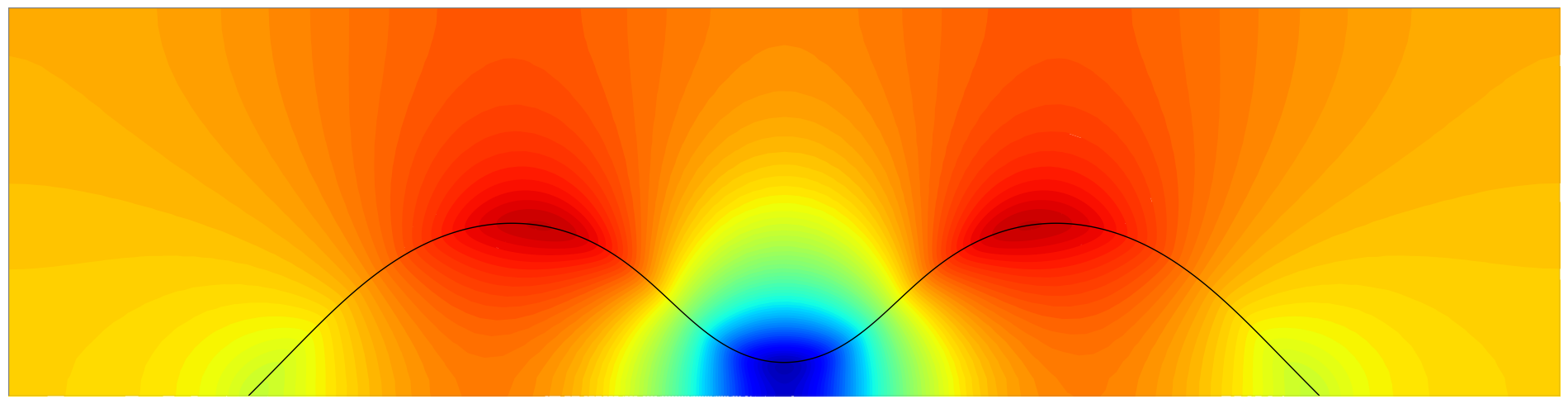}
    \includegraphics[width=0.4\textwidth]{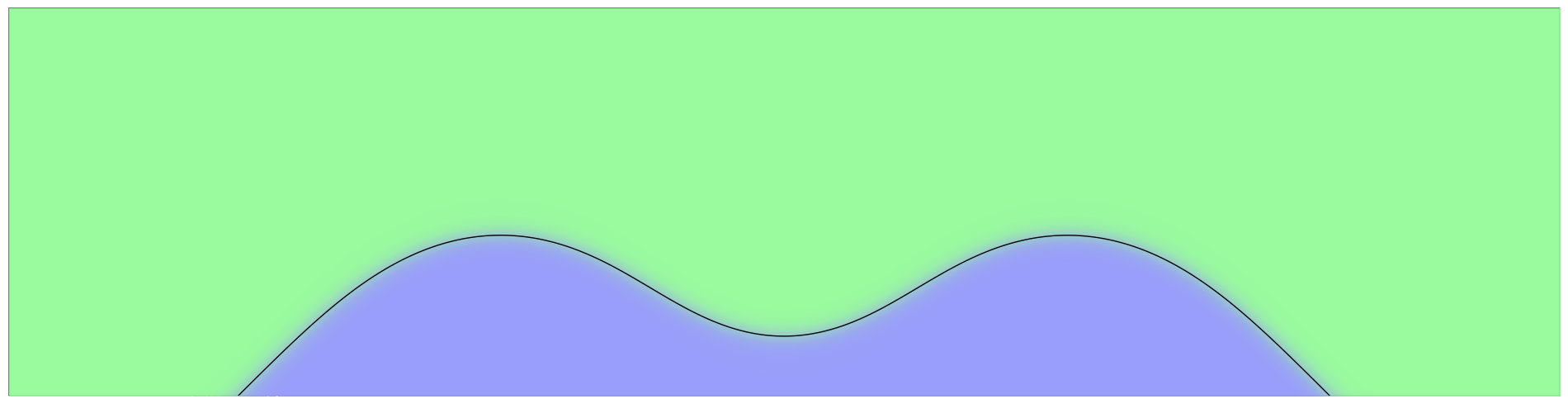} \includegraphics[width=0.4\textwidth]{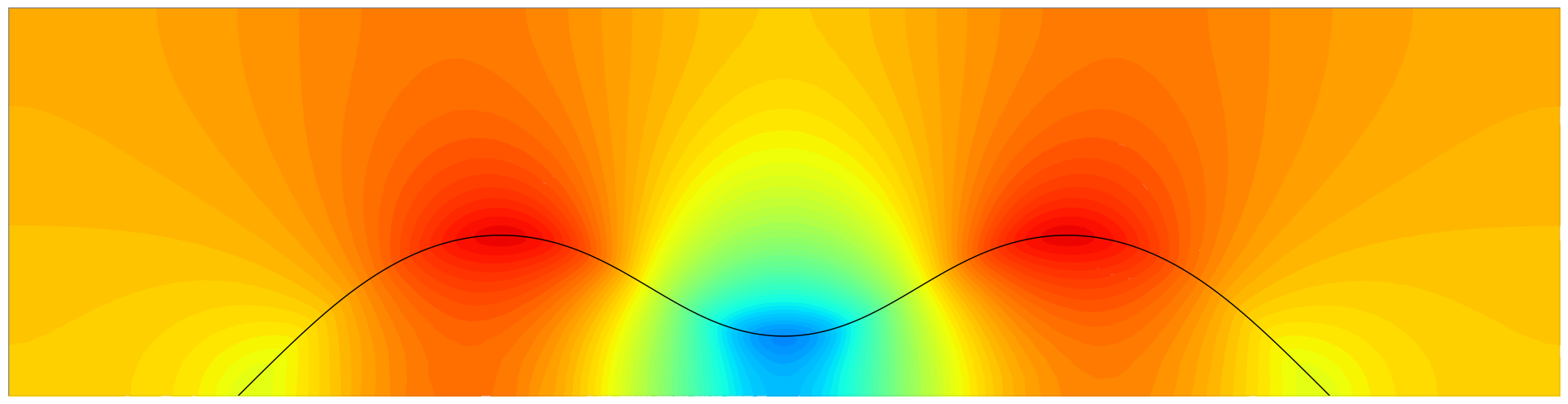}
    \includegraphics[width=0.4\textwidth]{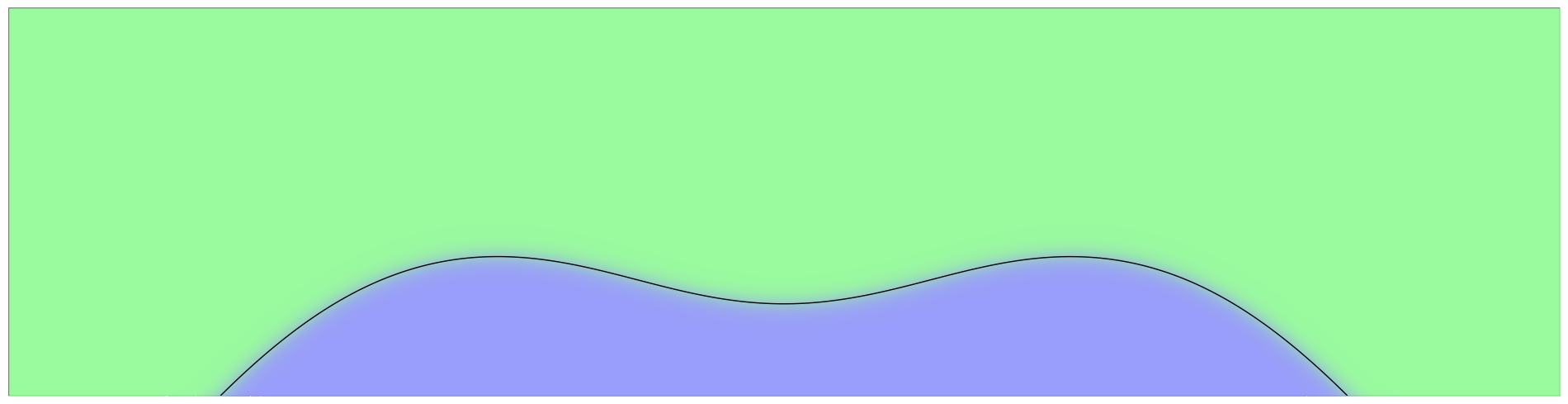} \includegraphics[width=0.4\textwidth]{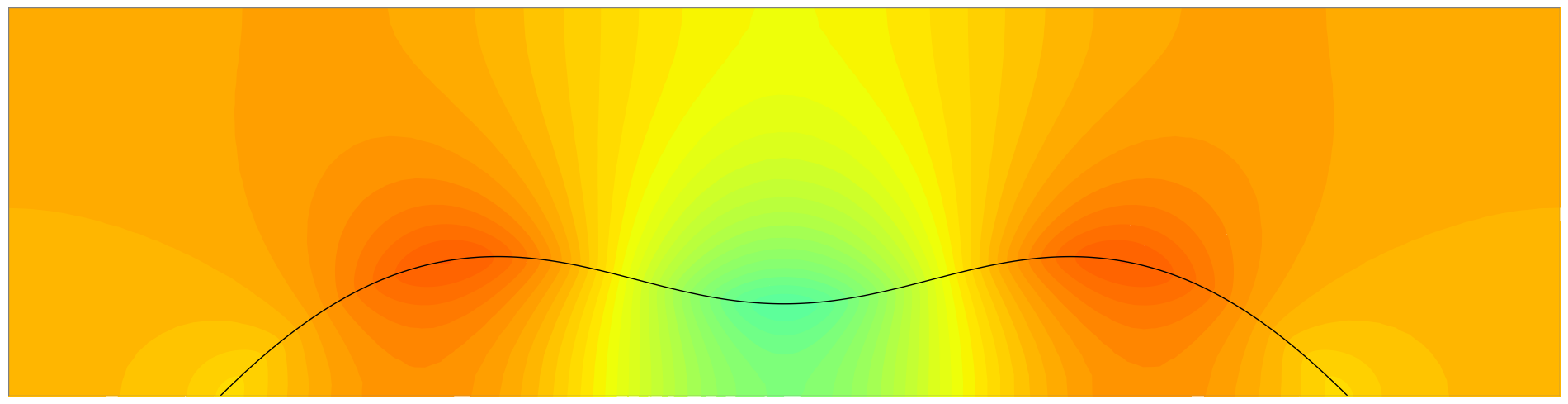}
    \includegraphics[width=0.4\textwidth]{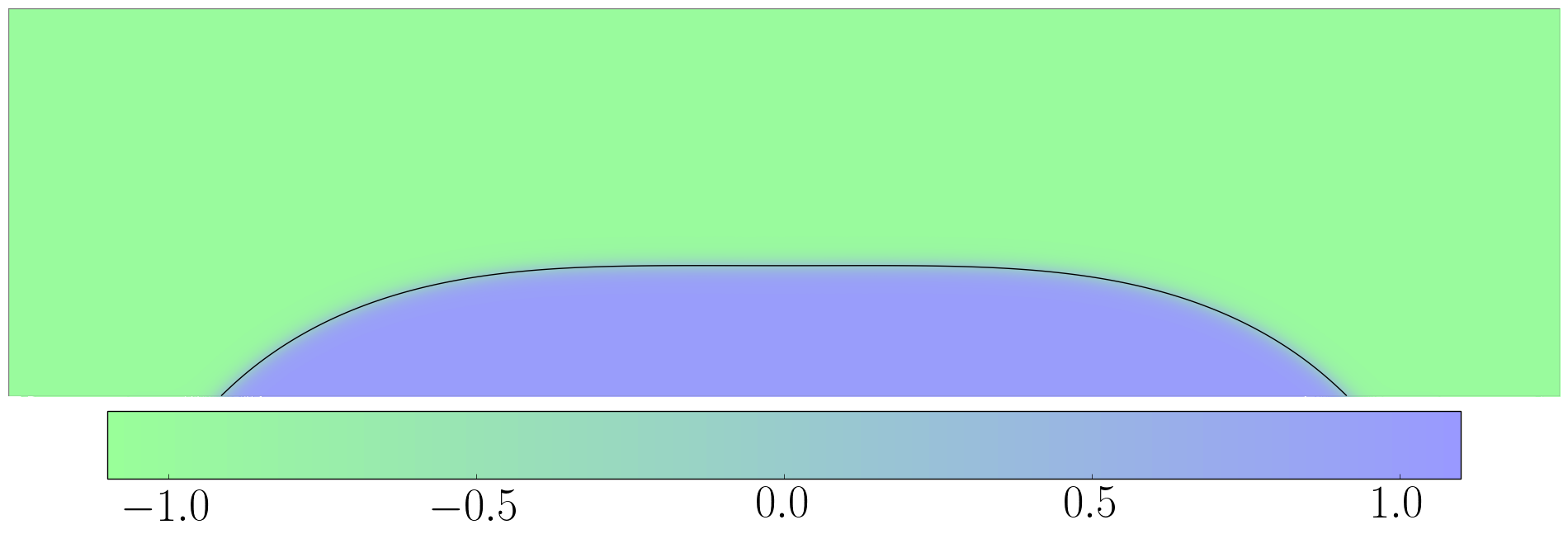} \includegraphics[width=0.4\textwidth]{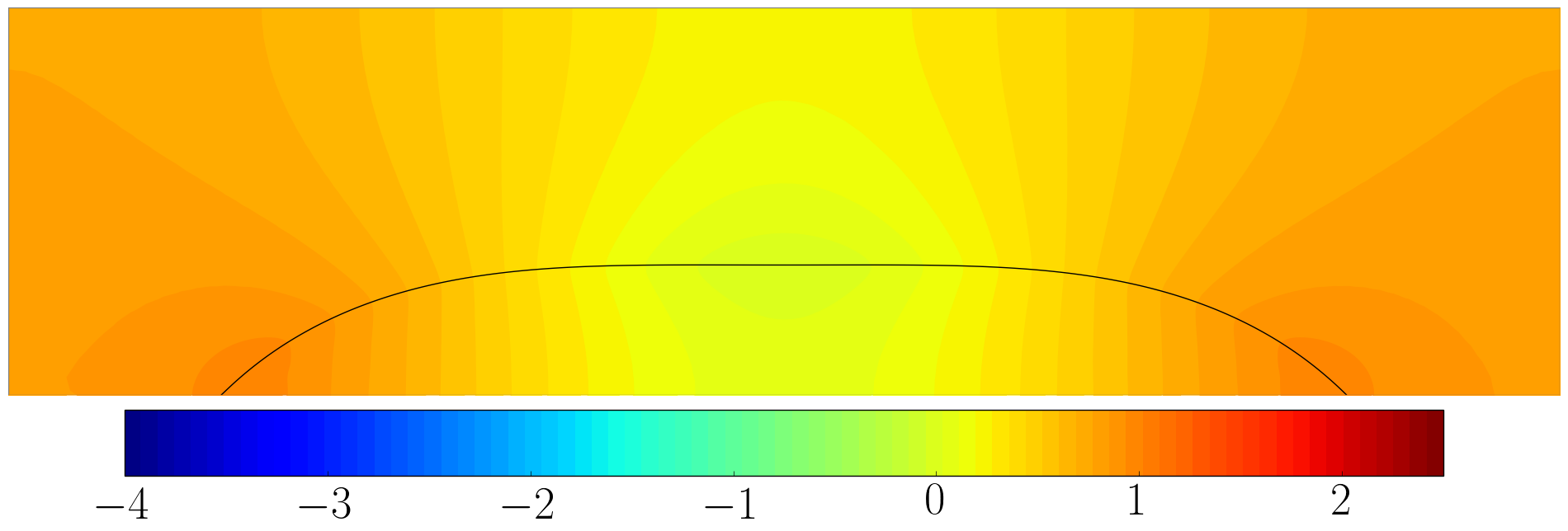}
  \end{center}
  \caption{Phase field and chemical potential during the coalescence of two sessile droplets on a hydrophilic substrate for $\theta = \pi/4$.
      The snapshots correspond to iterations 500, 1000, 1500, 2000, 2500, and 3000, which correspond to times 2.26, 19.55, 34.13, 49.03, 83.20 and 273.64.
      Blue color represents phase $\phi=1$ and green phase $\phi=-1$.}%
      \label{fig:coalescence_pio4}
\end{figure}

\begin{figure}
  \centering
     \centering
      \includegraphics[width=0.49\textwidth]{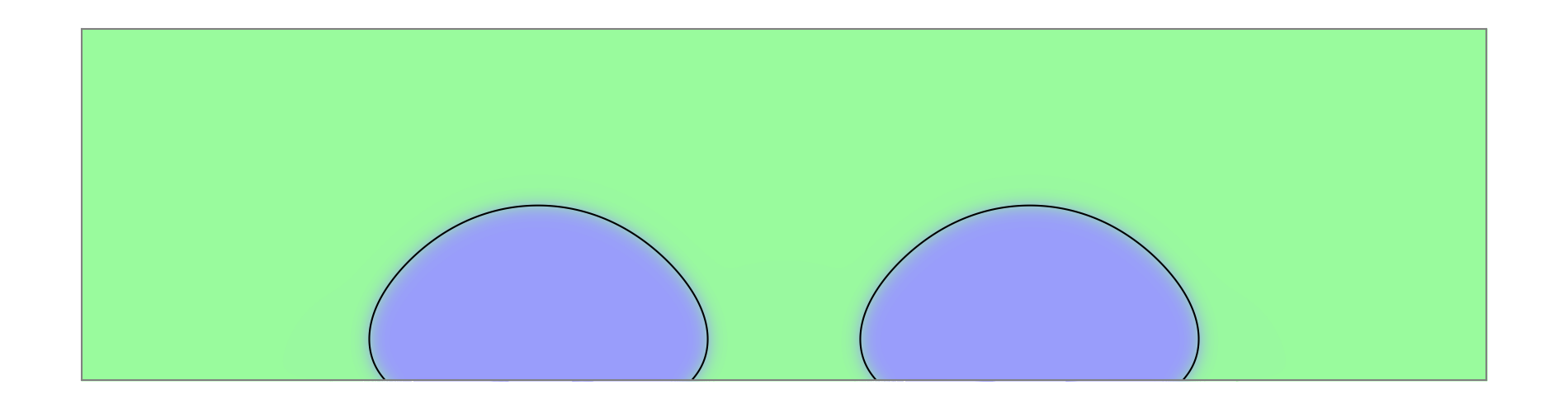} \includegraphics[width=0.49\textwidth]{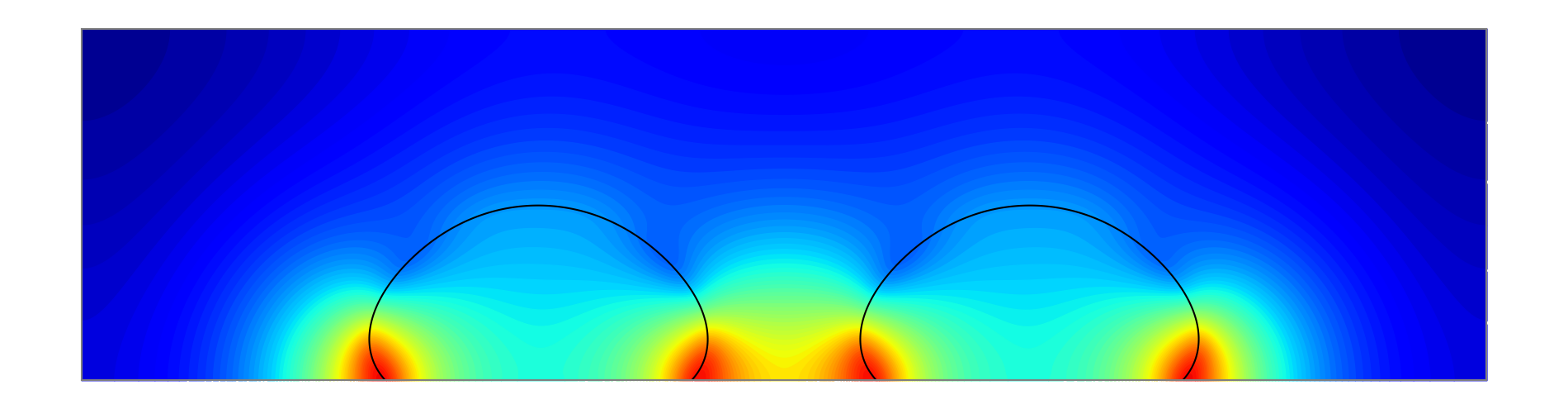}

      \includegraphics[width=0.49\textwidth]{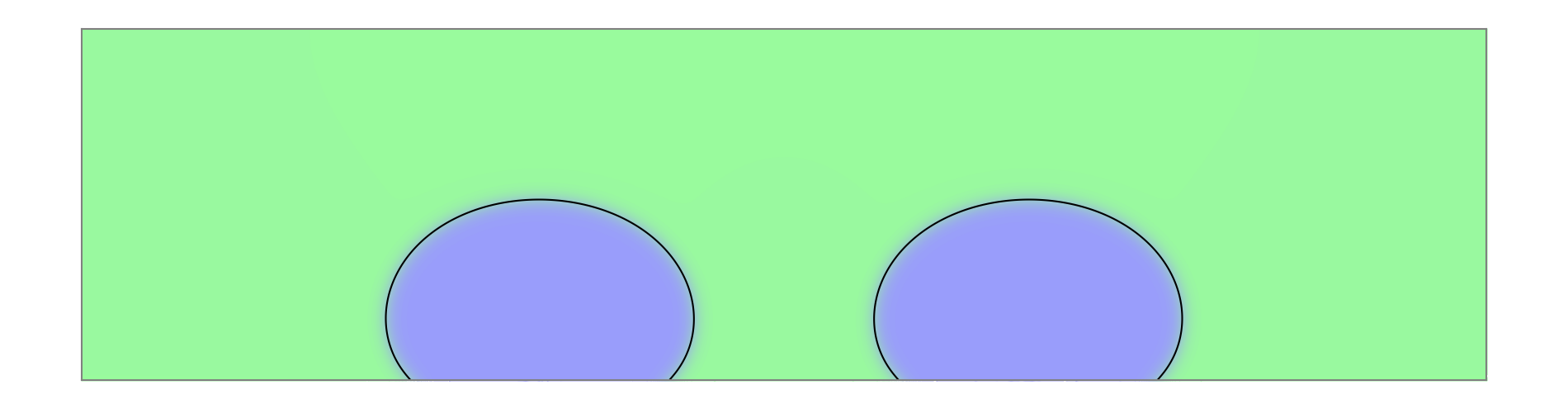} \includegraphics[width=0.49\textwidth]{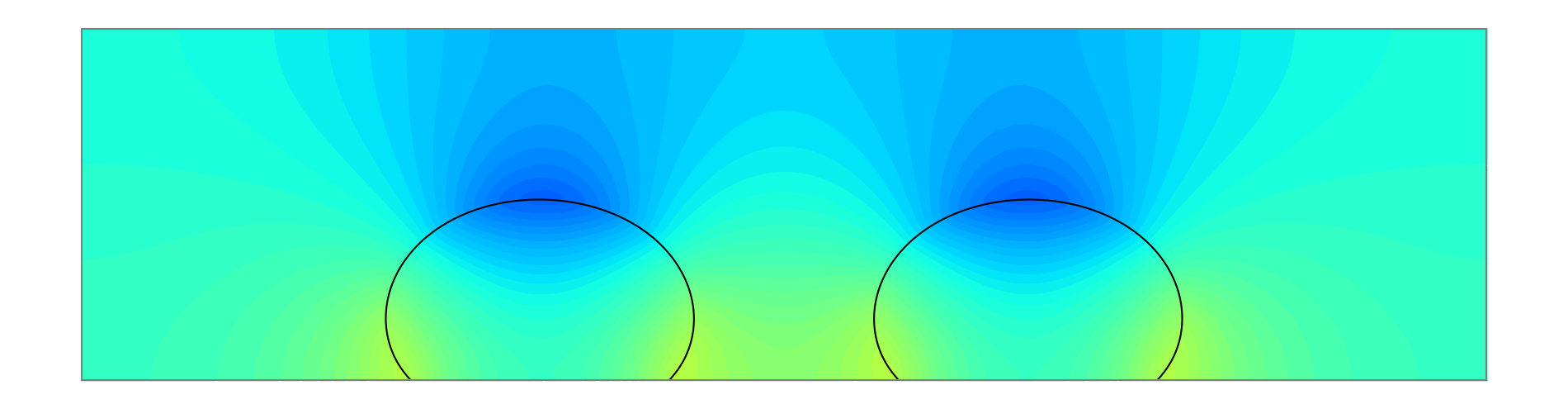}

      \includegraphics[width=0.49\textwidth]{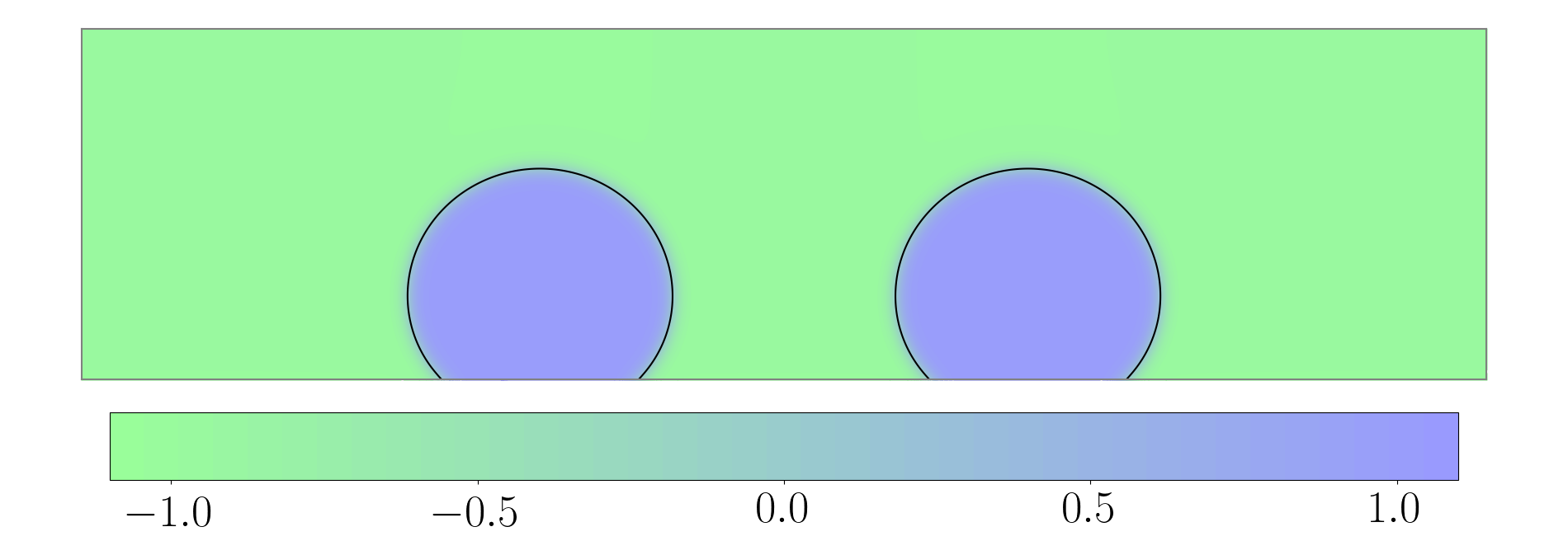} \includegraphics[width=0.49\textwidth]{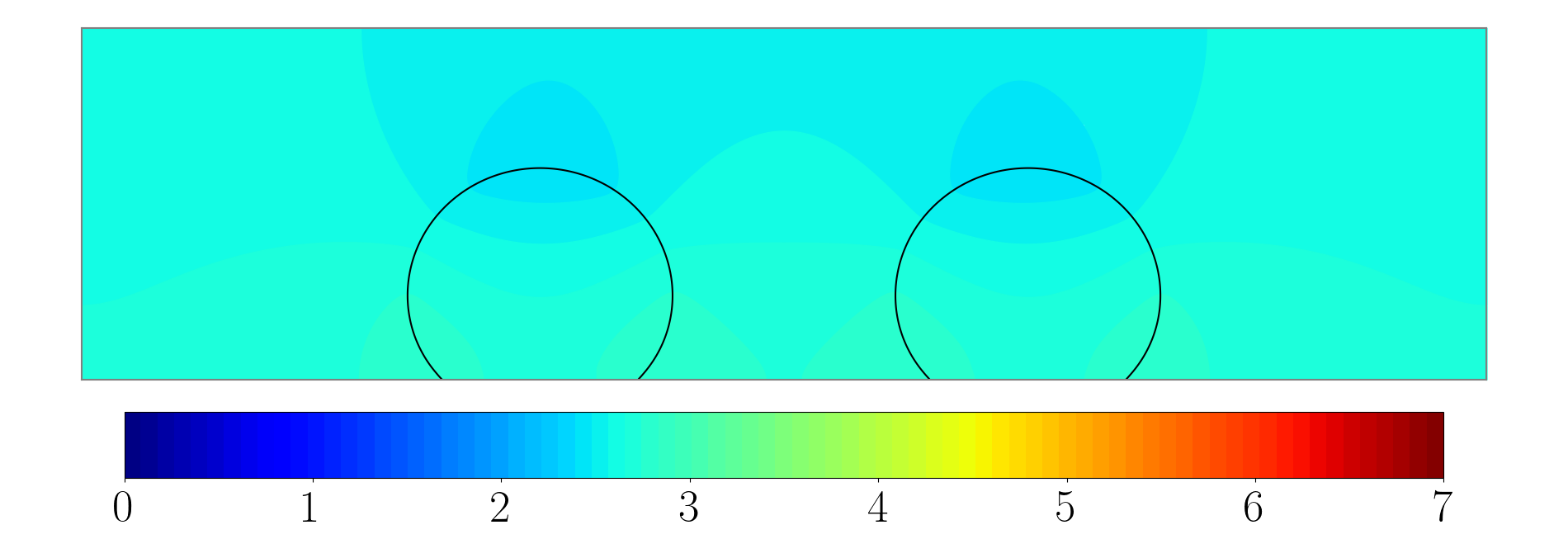}
  \caption{%
      Phase field and chemical potential when the contact angle is set to $3\pi/4$.
      The snapshots correspond to iterations 500, 1000, 1500, which correspond to times 1.47, 17.32, and 115.71.
  }%
  \label{fig:coalescence_3pio4}
\end{figure}

The evolution of the time step, of the number of recalculations,
and of the free energies is displayed in \cref{fig:coalescence_data}.
In all three cases,
the time step is refined several times at the first iteration,
to accommodate for the discontinuity of the initial condition.
Since the initial angle between the interface and the substrate is equal to $\pi/2$,
the number of recalculations performed at the first iteration is higher for $\theta = \pi/4, 3\pi/4$
than for $\theta = \pi/2$.
After the initial refinement,
the time step steadily increases to its maximum allowed value
for $\theta = \pi/2$ and $\theta = 3 \pi/4$,
but when $\theta = \pi/4$ a second refinement occurs to capture the coalescence of the droplets.

In this latter case,
we observe,
simultaneous with the second refinement of the time step,
an increase in the rate of dissipation of free energy.
After the formation of a new stable interface,
the total free energy continues to decrease,
but more slowly,
as a new droplet,
formed by the merging of the two original droplets, moves towards its equilibrium position.
We clearly identify the
coalescence time by looking at the singularity in the curve corresponding to the mixing energy.
This energy increases before coalescence, as the interfaces are being
stretched, and it decreases steadily after. The wall energy, on the other
hand, decreases at first and increases in the later stage of the simulation.
As prescribed by \cref{algorithm:timeStepAlgo},
the time step detects the variations of free energy;
it decreases when the rate of variation of the total free energy increases,
and conversely.

For comparison purposes,
we also included in~\cref{fig:coalescence:total_free_energy,fig:coalescence:interior_free_energy,fig:coalescence:wall_free_energy} data corresponding to the
case where a fixed time step is used for the simulations presented in this section.
There does not currently exist any result with conditions on the time step that ensure the stability of OD2,
and we haven't been able to show stability results for OD2-W either.
In practice, we observed that
the time step required to ensure stability of OD2-W with the set of parameters we use in this test case
would lead to a very high computational cost.
We point out that,
contrary to what we expected,
the time step required to achieve stable integration in time with the modified wall energy~\eqref{eq:modified_wall_energy},
which we use in this paper,
seems to be generally smaller than with the cubic formulation~\eqref{eq:cahn-hilliard:wallEnergy}.
To keep the computational cost at a reasonable level,
we carried out the simulations with a fixed time step using the method OD1-W,
the greater stability of which enabled us to choose $\Delta t = 0.02$.
In \cref{fig:coalescence:total_free_energy,fig:coalescence:interior_free_energy,fig:coalescence:wall_free_energy},
we see that, for the same contact angle,
the curves corresponding to a fixed and an adaptive time step are almost undistinguishable.
The agreement is also very good at the level of the phase field and chemical potential,
although we do not present snapshots of the solutions obtained with a fixed time step.

The CPU times corresponding to the three contact angles considered
are presented in \cref{table:cpu_times_time_adaptation}.
Without adaptation, the simulations take significantly longer to run,
which is consistent with the fact that more iterations (20000) were necessary to reach the final time.
In addition, among the simulations that used an adaptive time-step,
the difference between the CPU times is also significant,
with the case $\theta = \pi/4$ taking more than twice as long as the case $\theta = \pi/2$.
\begin{table}[ht!]
    \centering
    \begin{tabular}{|l|l|l|}
        \hline
        Contact angle & Adaptive time step & Fixed time step \\
        \hline
        $\pi/4$ & 44:15:17 & 130:16:16 \\ \hline
        $\pi/2$ & 21:38:38 & 128:35:20 \\ \hline
        $3\pi/4$ & 31:12:18 & 122:33:28 \\ \hline
    \end{tabular}
    \caption{%
        CPU times (hh:mm:ss) using an Intel i7-3770 processor for the simulations presented in \cref{sub:time_adaptation_scheme} (two droplets on a substrate),
        with or without time-step adaptation.
        The method OD2-W was used for the simulations with an adaptive time step,
        and the method OD1-W was used for the simulations with a fixed time step.
        In both cases, an adaptive mesh was used, with the parameter $h_{\min}$ equal to $\varepsilon/10 = 0.001$.
    \label{table:cpu_times_time_adaptation}%
    }
\end{table}

\begin{figure}
  \centering
      \begin{minipage}[b]{.48\linewidth}
          \centering
          \includegraphics[width=\textwidth]{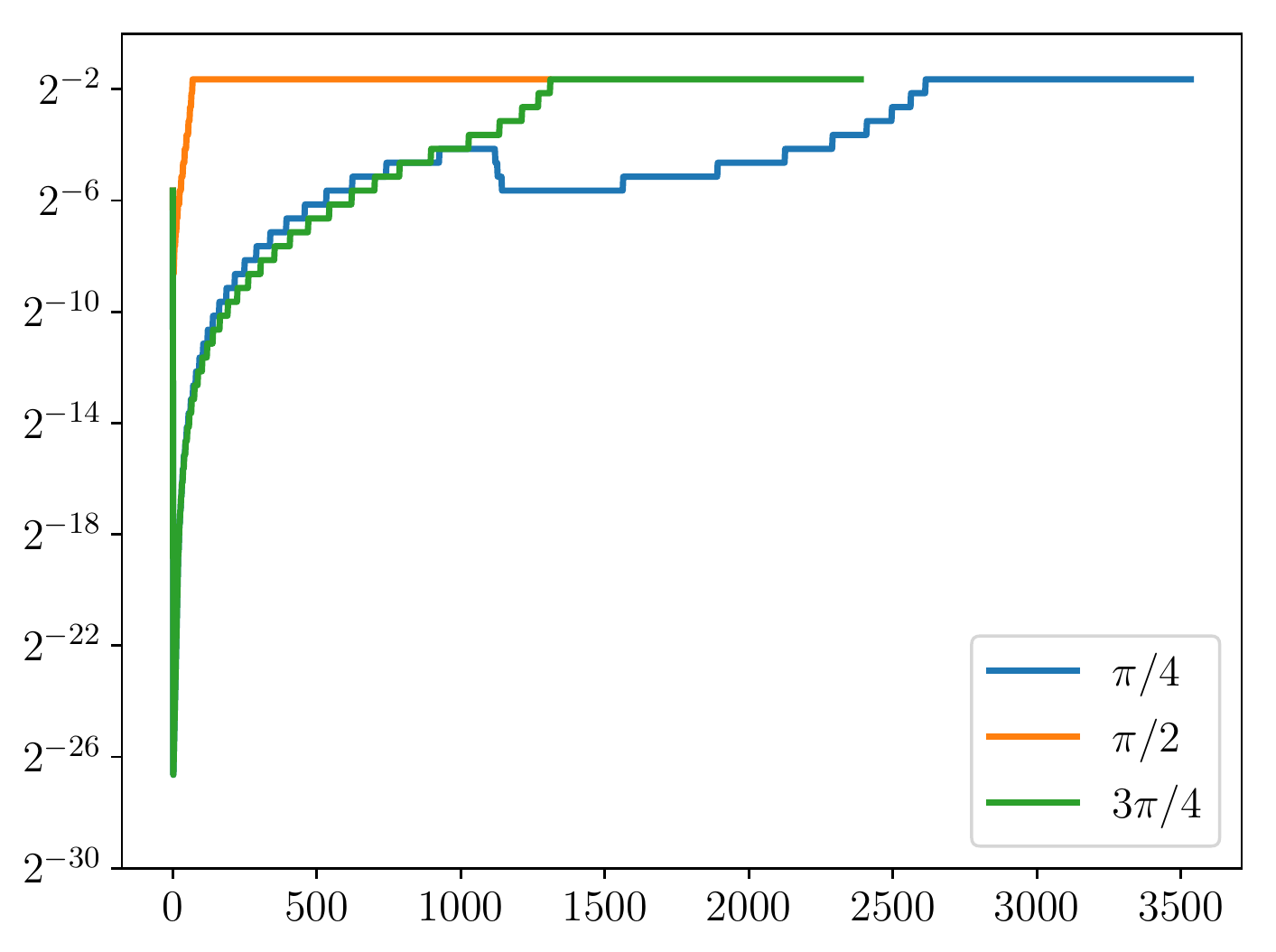}
          \subcaption{Time step vs iteration.}
      \end{minipage}%
      \begin{minipage}[b]{.48\linewidth}
          \centering
          \includegraphics[width=\textwidth]{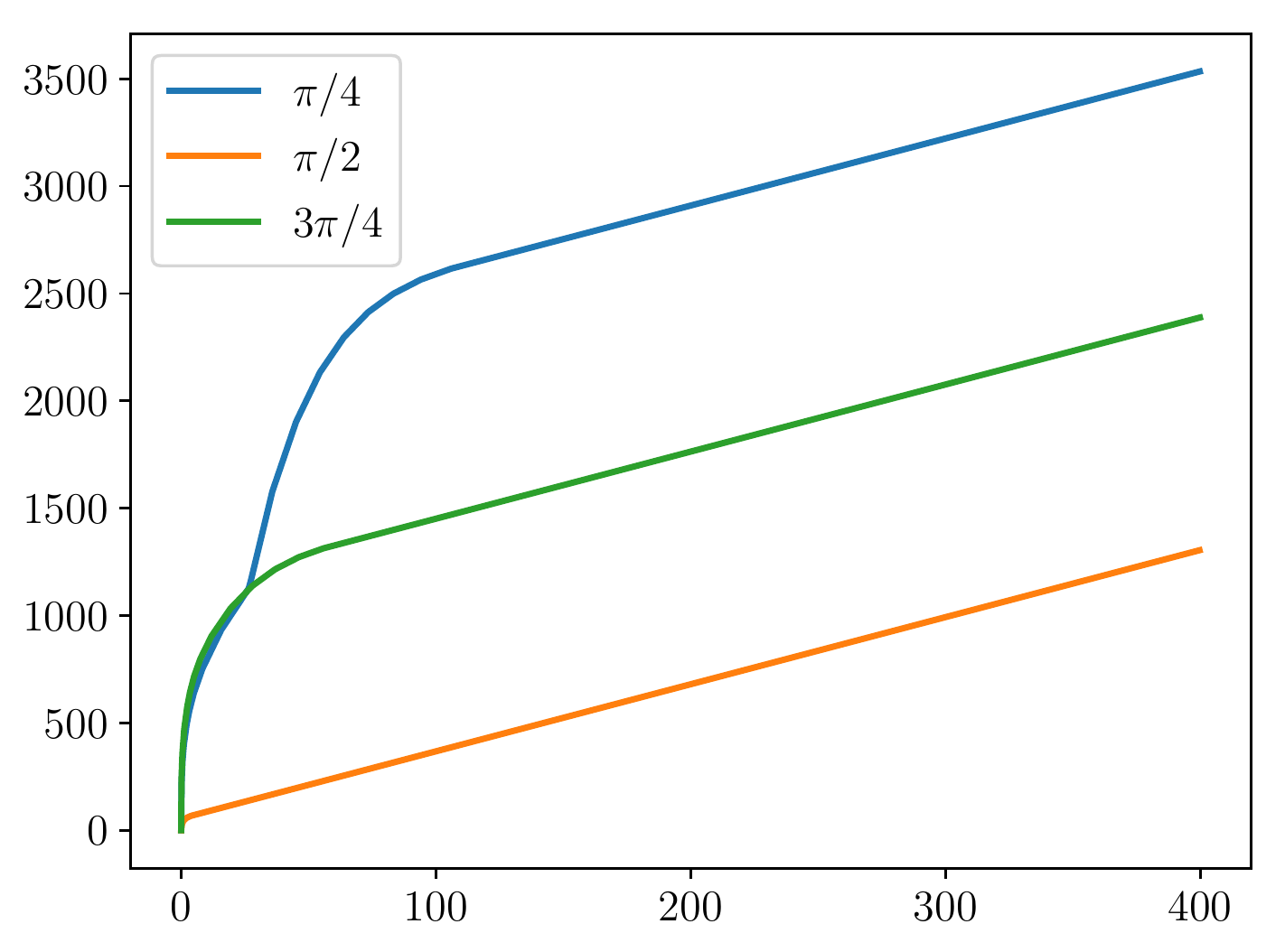}
          \subcaption{Iteration vs time.}
      \end{minipage}%

      \begin{minipage}[b]{.48\linewidth}
          \centering
          \includegraphics[width=\textwidth]{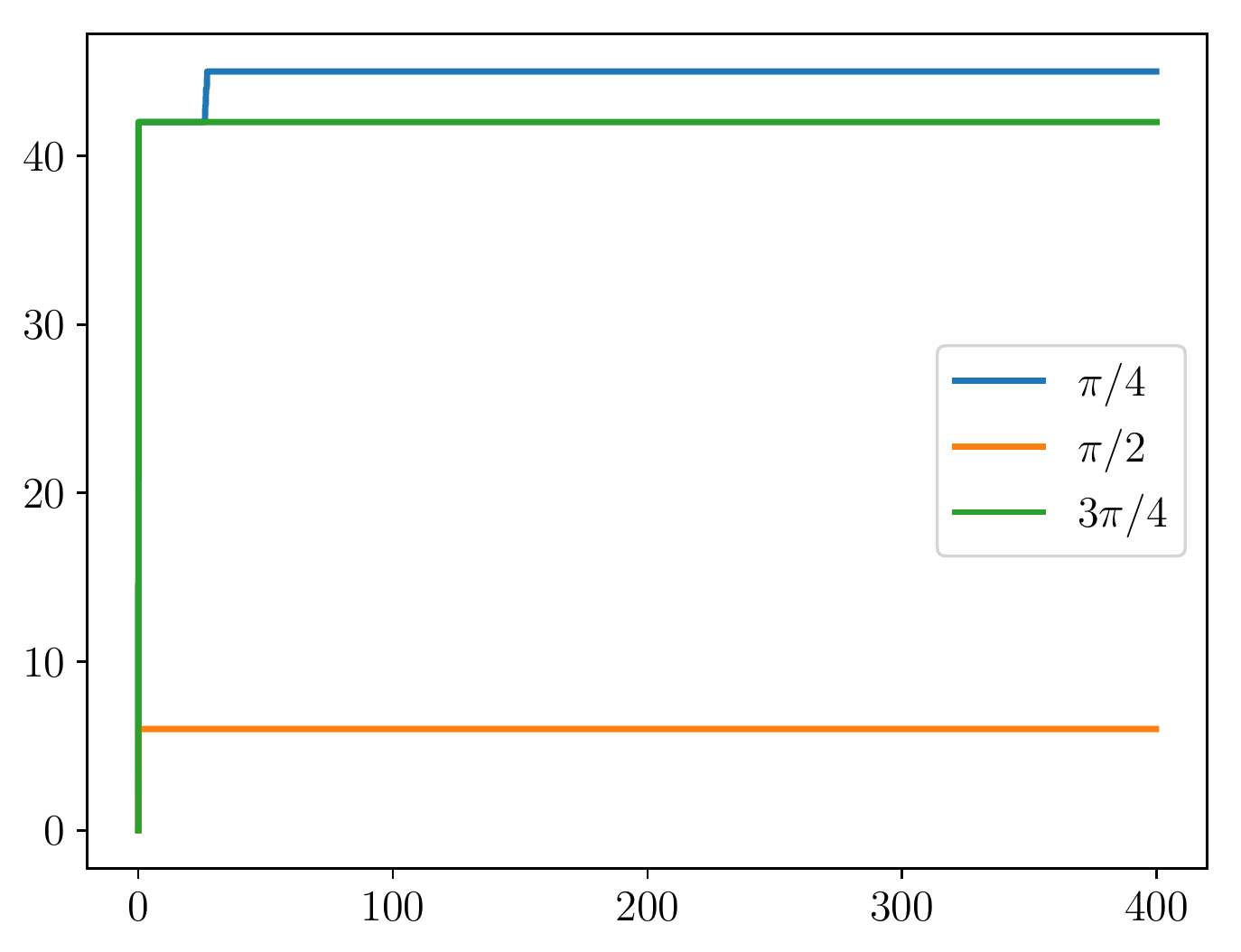}
          \subcaption{\# Recalculations vs time.}
      \end{minipage}%
      \begin{minipage}[b]{.48\linewidth}
          \centering%
          \includegraphics[width=\textwidth]{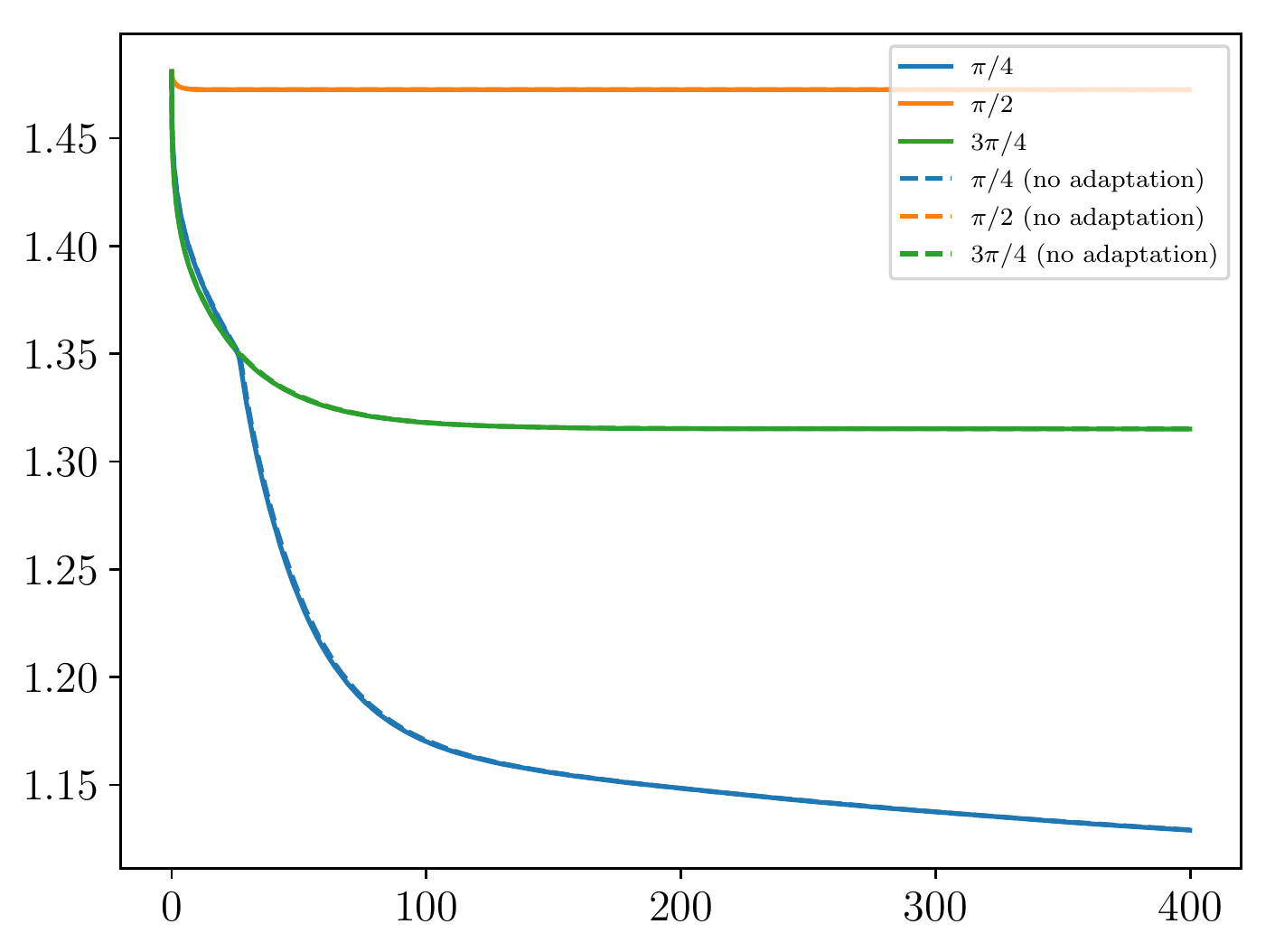}%
          \subcaption{Total free energy vs time.}%
          \label{fig:coalescence:total_free_energy}%
      \end{minipage}%

      \begin{minipage}[b]{.48\linewidth}
          \centering%
          \includegraphics[width=\textwidth]{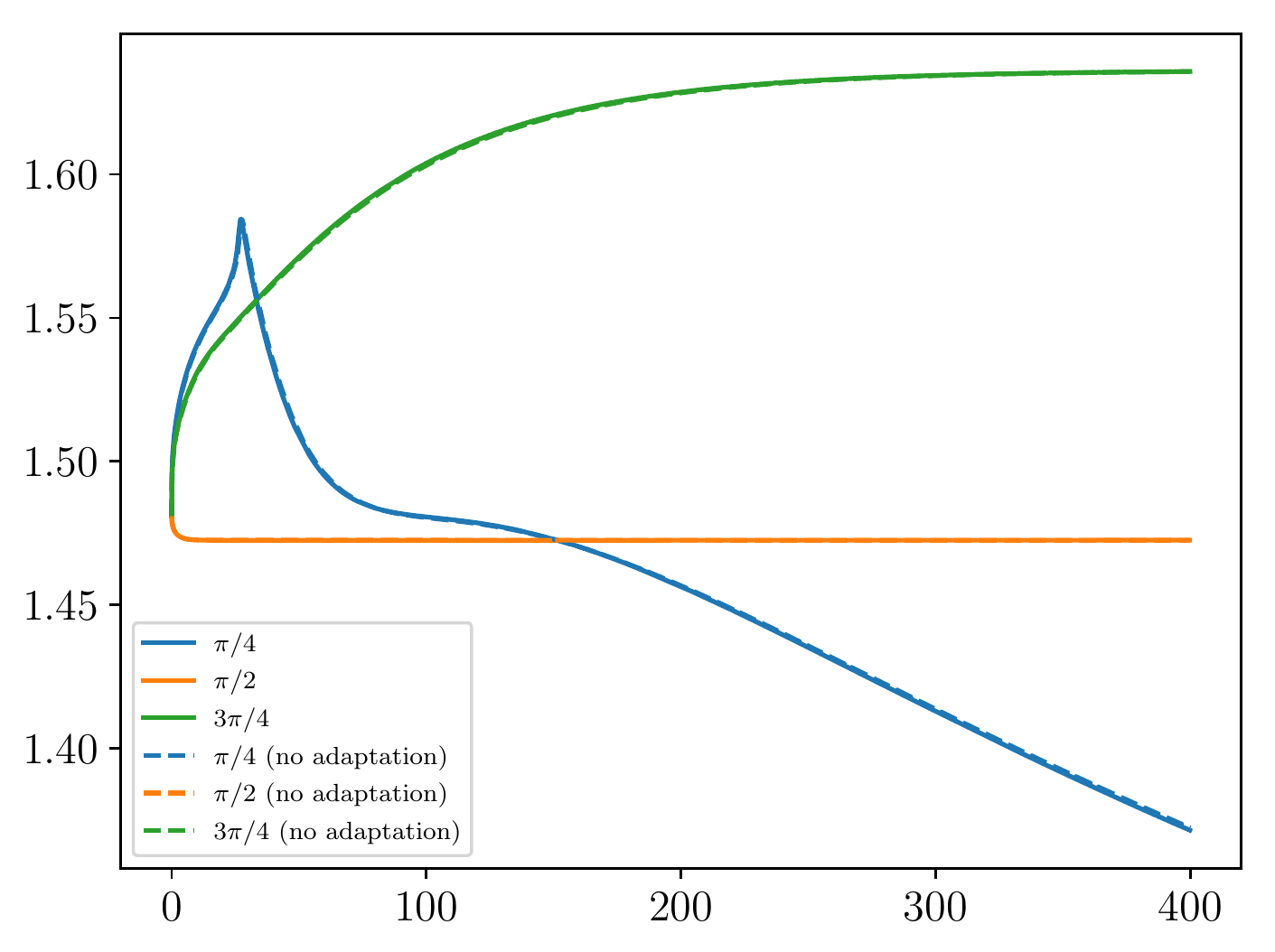}%
          \subcaption{Wall free energy vs time.}%
          \label{fig:coalescence:wall_free_energy}%
      \end{minipage}%
      \begin{minipage}[b]{.48\linewidth}
        \centering%
        \includegraphics[width=\textwidth]{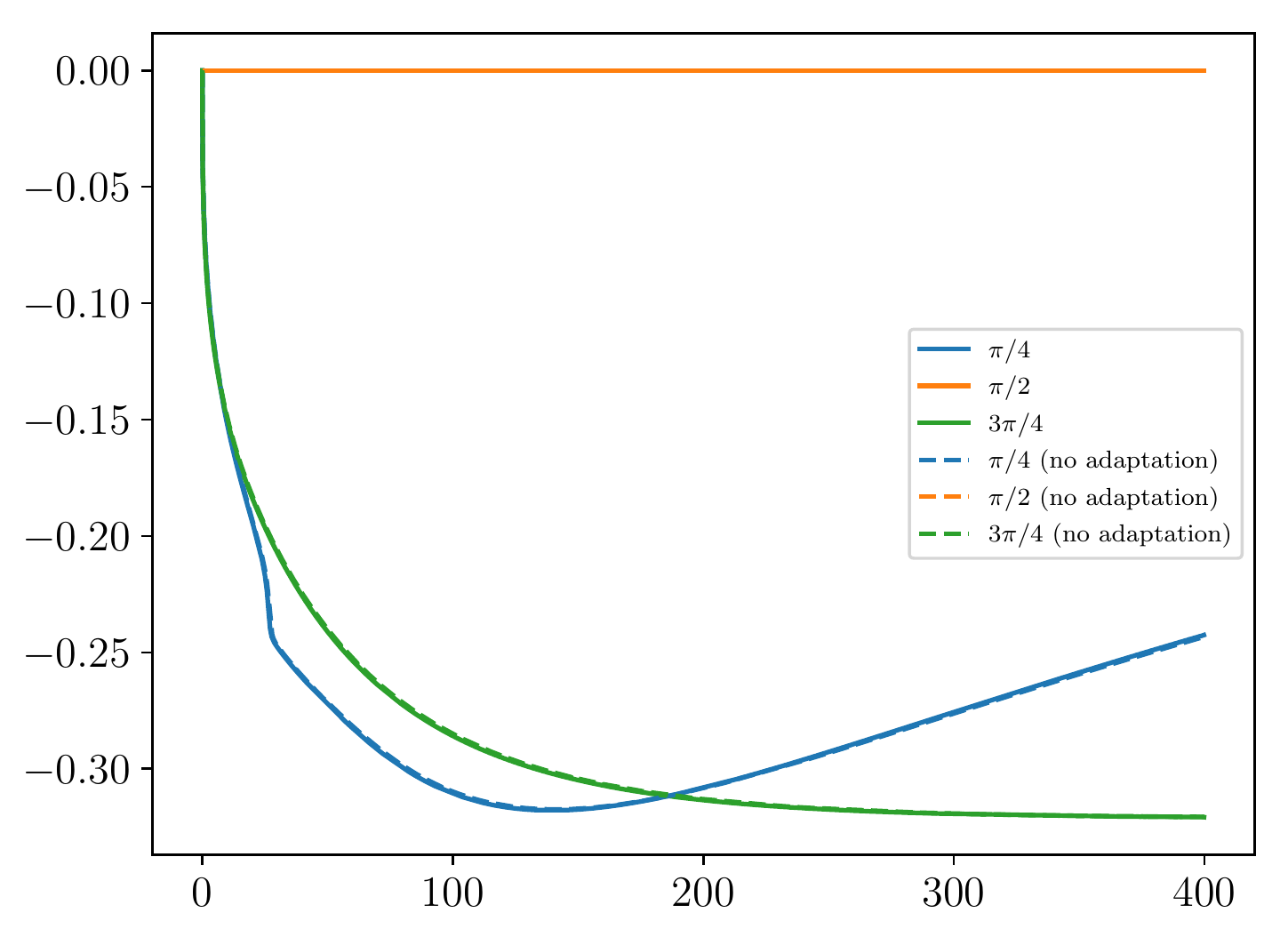}%
        \subcaption{Mixing free energy vs time.}%
        \label{fig:coalescence:interior_free_energy}%
    \end{minipage}

    \caption{%
        Simulation data for the numerical experiments presented in \cref{sub:time_adaptation_scheme} (two droplets on a substrate),
        when using the adaptive time-stepping scheme~\ref{algorithm:timeStepAlgo}.
        As expected, the total free energy decreases with time for all three values of the contact angle.
        In the case $\theta = \pi/4$,
        we note a peak in the mixing component of the free energy
        and a refinement of the time step at the coalescence time.
    }
    \label{fig:coalescence_data}
\end{figure}

\subsection{Wetting in complex geometries and with heterogeneous substrates}
\label{sub:wetting_in_complex_geometries_and_with_heterogeneous_substrates}

We now present the results of numerical experiments in more complicated and realistic settings,
in both 2D and 3D systems.

\subsubsection{3D droplet on a chemically heterogeneous substrate}%
\label{ssub:3d_droplet_on_a_chemically_heterogeneous_substrate}

We study the dynamics of a 3D sessile droplet on a flat substrate with chemical
heterogeneities, i.e.~the contact angle has a spatial dependence now, say
$\theta = \theta(x,y)$. This situation typically arises in electro-wetting
settings \cite{Lippmann}. It is widely accepted that the droplet shape can be
controlled using patterned substrates, e.g.~Ref~\cite{Raj2011, Zhang}, that
may also be modelled efficiently using a space varying contact
angle~\cite{Raj2011}. We consider chemical heterogeneities on the substrate
of the form
\begin{equation}
  \theta(x,y) = \theta_0 + a \cos(f_x \pi x) \cos(f_y \pi y),
  \label{chemTheta}
\end{equation}
with $\theta_0 = \frac{\pi}{2}$ the mean contact angle, $a=\frac{\pi}{6}$ the
amplitude, and $f_x=f_y=4$ the frequencies in $x$ and $y$ directions,
respectively. As initial condition we take a droplet of base radius $r_0 =
0.24$ centered at $\vect{x_0} = (0.5,0.5,0)$. The initial values of the phase
field are given as
\begin{equation}
    \phi_0(\vect{x}) = -\tanh \left(\frac{\|\vect{x} - \vect{x_0}\| - r_0}{\sqrt{2}\varepsilon} \right).
  \label{IC3D}
\end{equation}

Results are displayed in \cref{wellsIso}. The droplet, initially spherical,
spreads on the hydrophilic regions of the substrate, and retracts from the
hydrophobic patches. While we do not present any quantitative analysis of the
error in this case, we note that the wetting behaviour agrees qualitatively
with what one might expect intuitively from our understanding of wetting
phenomena. While it progresses towards equilibrium, the droplet adopts a
diamond-like shape.

For this test case, we used the method OD2-W with adaptation in space and time.
The parameters used were the following:
$b = 10^4$, $\varepsilon = 0.02$, $h_{\max} = 10 \, h_{\min} = 0.1$,
$\Delta t_0 = 0.0016$, $\Delta t_{\min} = 0$, $\Delta t_{\max} = 16 \, \Delta t_0$, $f = \sqrt{2}$, $\Delta E_{\max} = 10 \, \Delta E_{\min} = 0.0001$.
With these parameters,
the time step was refined only at the beginning of the simulation,
which is consistent with the absence of coalescence events in this case.
There were 24 recalculations at the first time step,
corresponding to a refinement of the time step by a factor $f^{24} = 4096$.

\begin{figure}
  \begin{center}
    \includegraphics[width=0.45\textwidth]{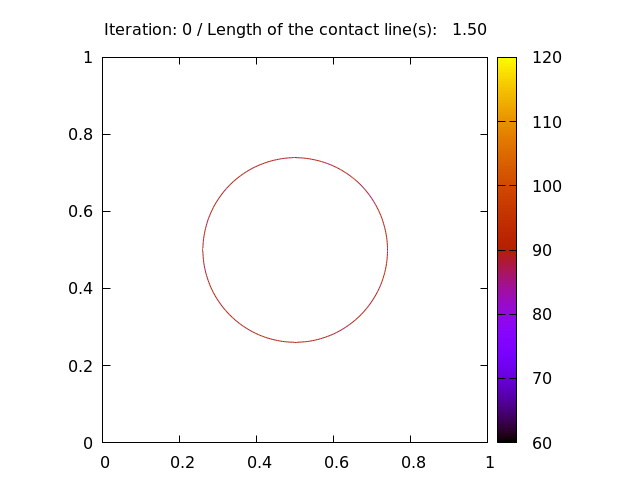}
    \includegraphics[width=0.45\textwidth]{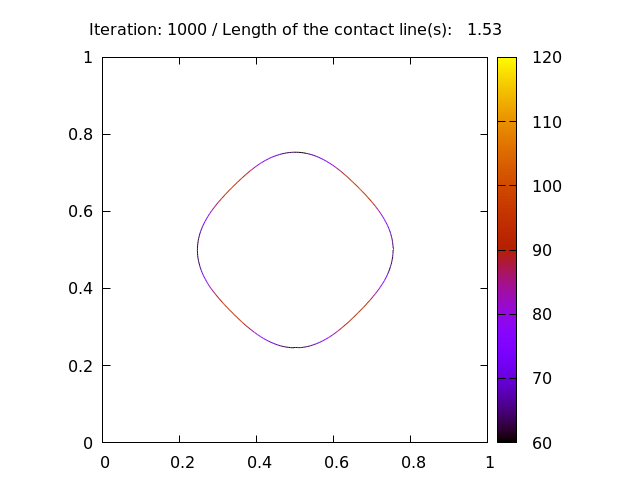}\\
    \includegraphics[width=0.45\textwidth]{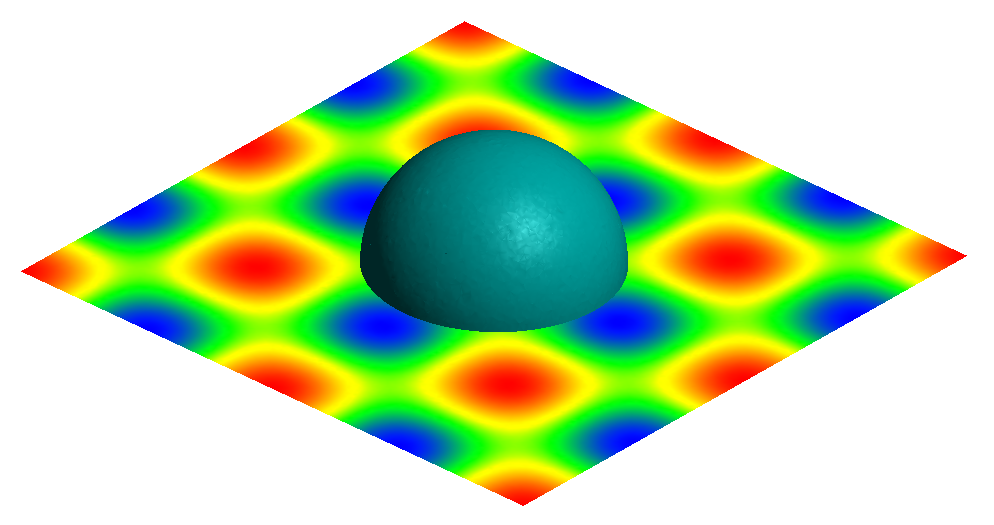}
    \includegraphics[width=0.45\textwidth]{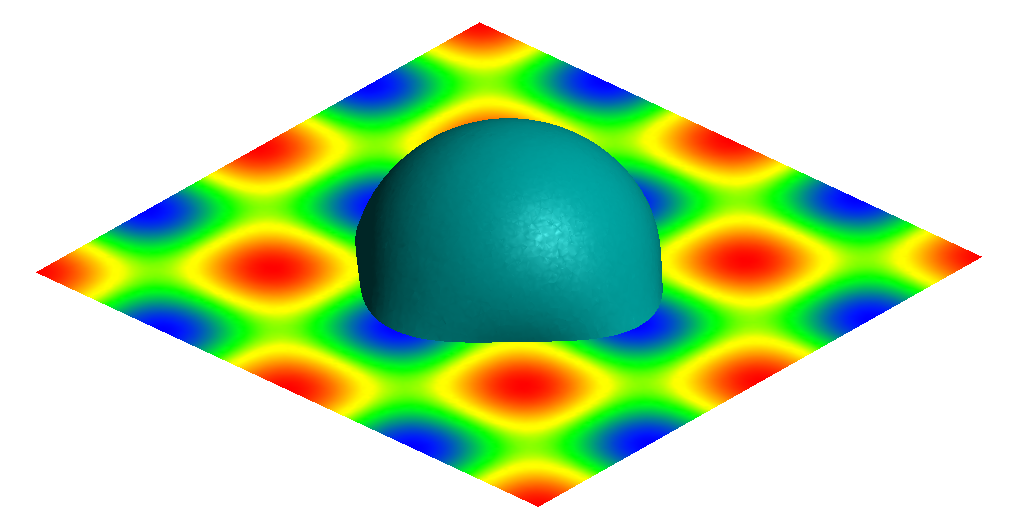}
    \includegraphics[height=0.25\textwidth]{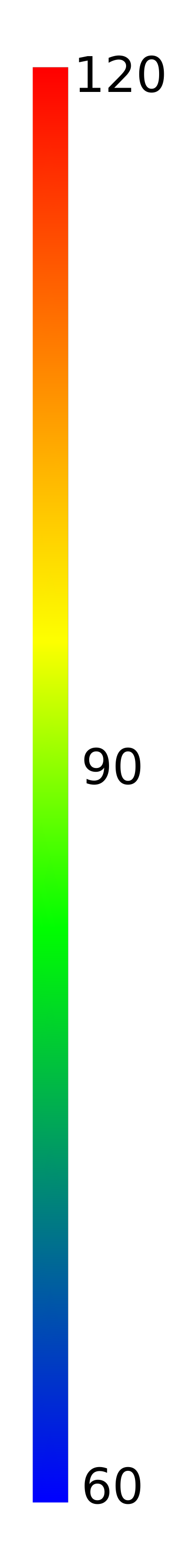}\\
  \end{center}
  \caption{Evolution of the contact line (top) and the isosurface $\phi=0$ (bottom) of the phase field,
      on a chemically heterogeneous substrate with a contact angle defined by \eqref{chemTheta}.
      The balance of the wall component and the mixing component of the free energy determines the motion of the drop.
      The field represented in the top figures is the value of the contact angle at the triple line.
      In the bottom figures, the field represented on the plane representing the substrate is the imposed contact angle.
      Interestingly, the heterogeneities of the substrate cause the length of the contact line to increase.}
  \label{wellsIso}
\end{figure}
\subsubsection{Diffusion in a 3D porous medium}

Here we consider a binary fluid in a model porous medium consisting of a cube
filled with spheres. The cube has edges of length 1, and the spheres have
radius $0.1$ and are located at positions $(1.5, 1.5, 1.5) + 2 \Delta(i,j,k)$
with $\Delta= 1/7$ and $i,j,k \in \{0,1,2\}$. We take all the substrates to
be neutral, i.e.~$\theta = \frac{\pi}{2}$, and the initial condition is the
same as used before, defined by \cref{IC3D}. In addition, we include an
inflow boundary condition at the bottom of the cube to represent a pore where
liquid can be pumped in. The radius of this pore is  $0.1$ and is located at
$(0.5, 0.5,0)$. This boundary condition can be incorporated by imposing
\begin{align}
    \label{eq:test_case_spheres_boundary_condition_chem_pot}
    \grad \mu \cdot \vect{n} &= -10, \quad \phi =1,
\end{align}
which models the situation when the component $\phi=1$ is pumped into the domain.
Under these conditions, we study how the flow is affected by the geometry of the domain.
Our results are depicted in \cref{porousIso,porousMacro}.

The imposed contact angle at the spheres is $\pi/2$, forcing the isosurface
to stay normal to the spheres as long as these are not completely covered.
Because of the boundary
condition~\eqref{eq:test_case_spheres_boundary_condition_chem_pot}, the mass
increases linearly, and the free energy increases, in agreement with
\cref{Mlaw,Elaw}. This case study demonstrates the ability of our method to
easily tackle complex geometries.
The parameters used for this test case are the same as in \cref{ssub:3d_droplet_on_a_chemically_heterogeneous_substrate},
except that we employed the fixed time step $\Delta t = 0.001$.

\begin{figure}
  \begin{center}
    \includegraphics[width=0.32\textwidth]{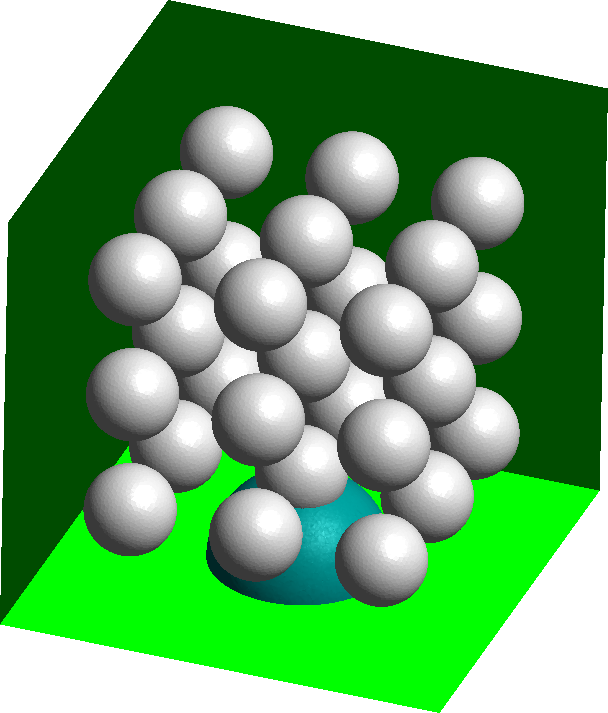}
    \includegraphics[width=0.32\textwidth]{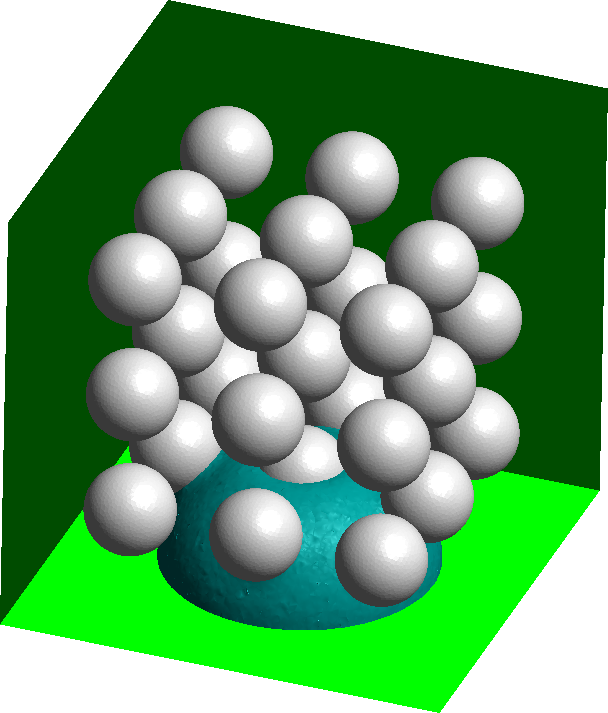}
    \includegraphics[width=0.32\textwidth]{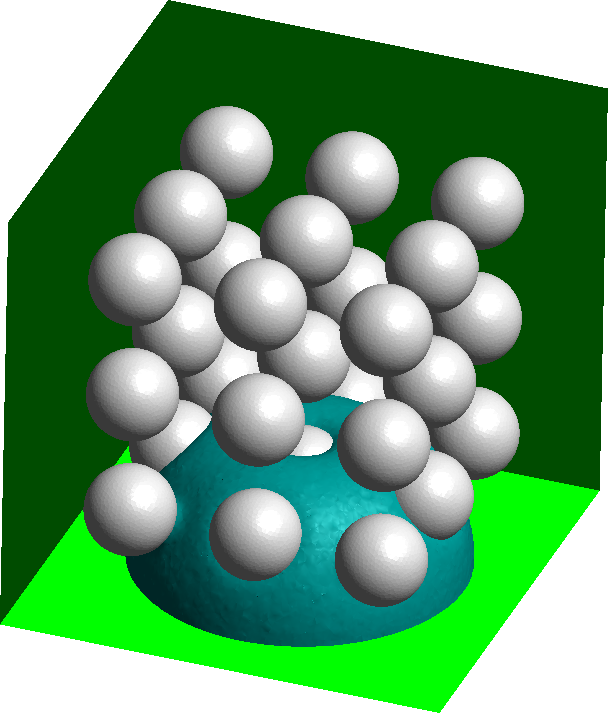}
    \includegraphics[width=0.32\textwidth]{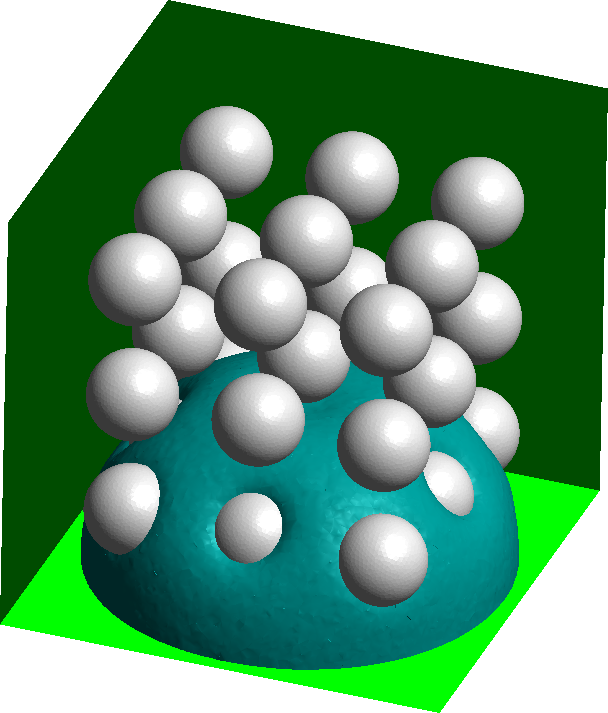}
    \includegraphics[width=0.32\textwidth]{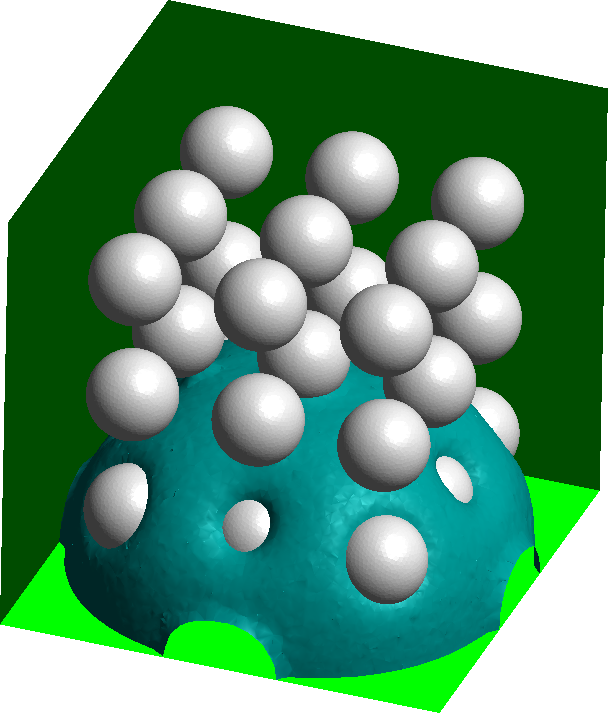}
  \end{center}
  \caption{Evolution of the isosurface $\phi=0$ of the phase field when a constant flux is imposed at the bottom boundary;
    The pictures correspond to iterations 0, 200, 400, 800 and 1000.
    Note that, because of the neutral boundary condition imposed at the spheres,
    the isosurface tends to stay normal to them as long as they are not completely covered.
  }
  \label{porousIso}
\end{figure}

\begin{figure}
  \centering {%
        \begin{minipage}[b]{.5\linewidth}
            \includegraphics[width=\textwidth]{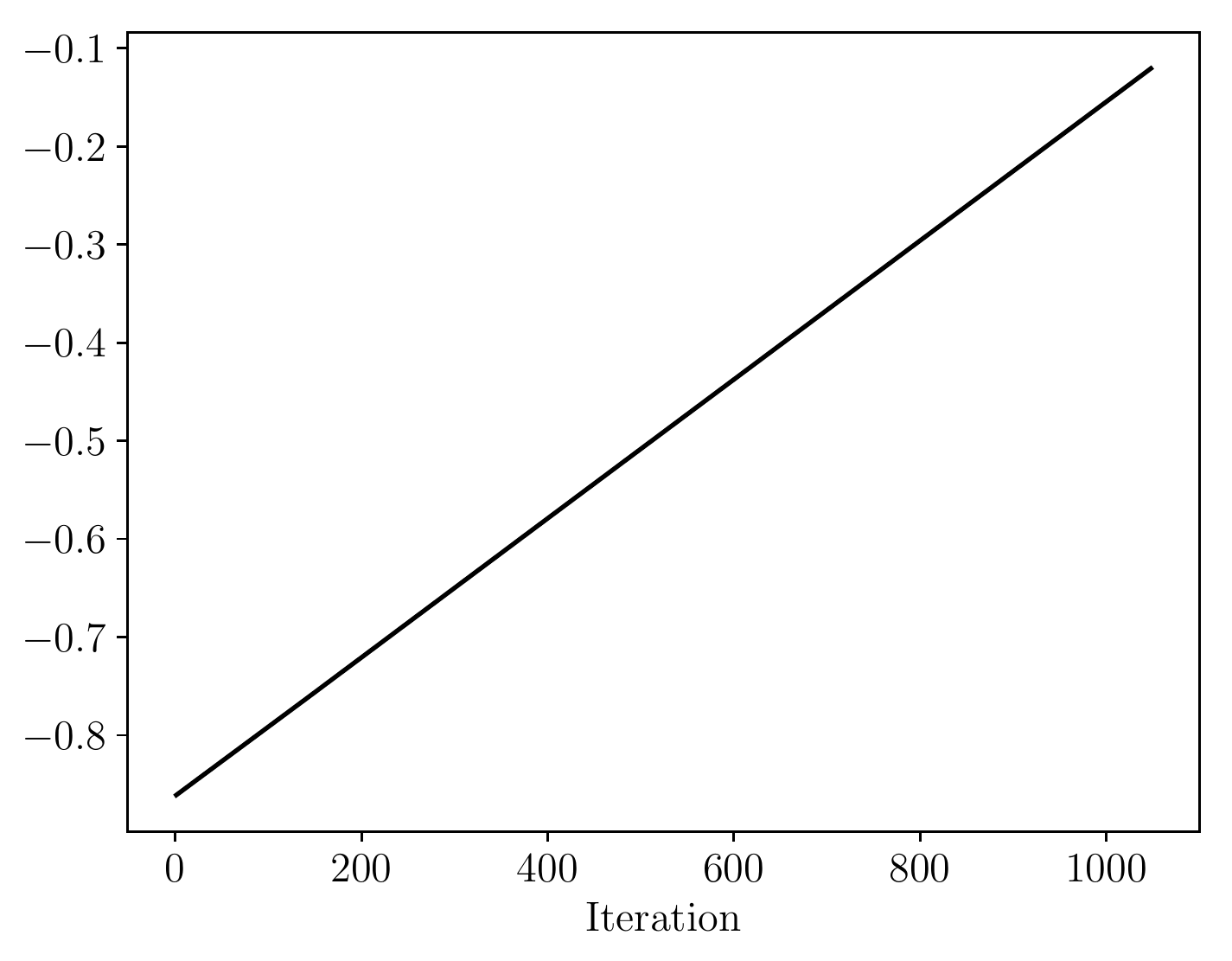}
            \subcaption{Mass, \cref{Mlaw}.}
        \end{minipage}%
        \begin{minipage}[b]{.49\linewidth}
            \includegraphics[width=\textwidth]{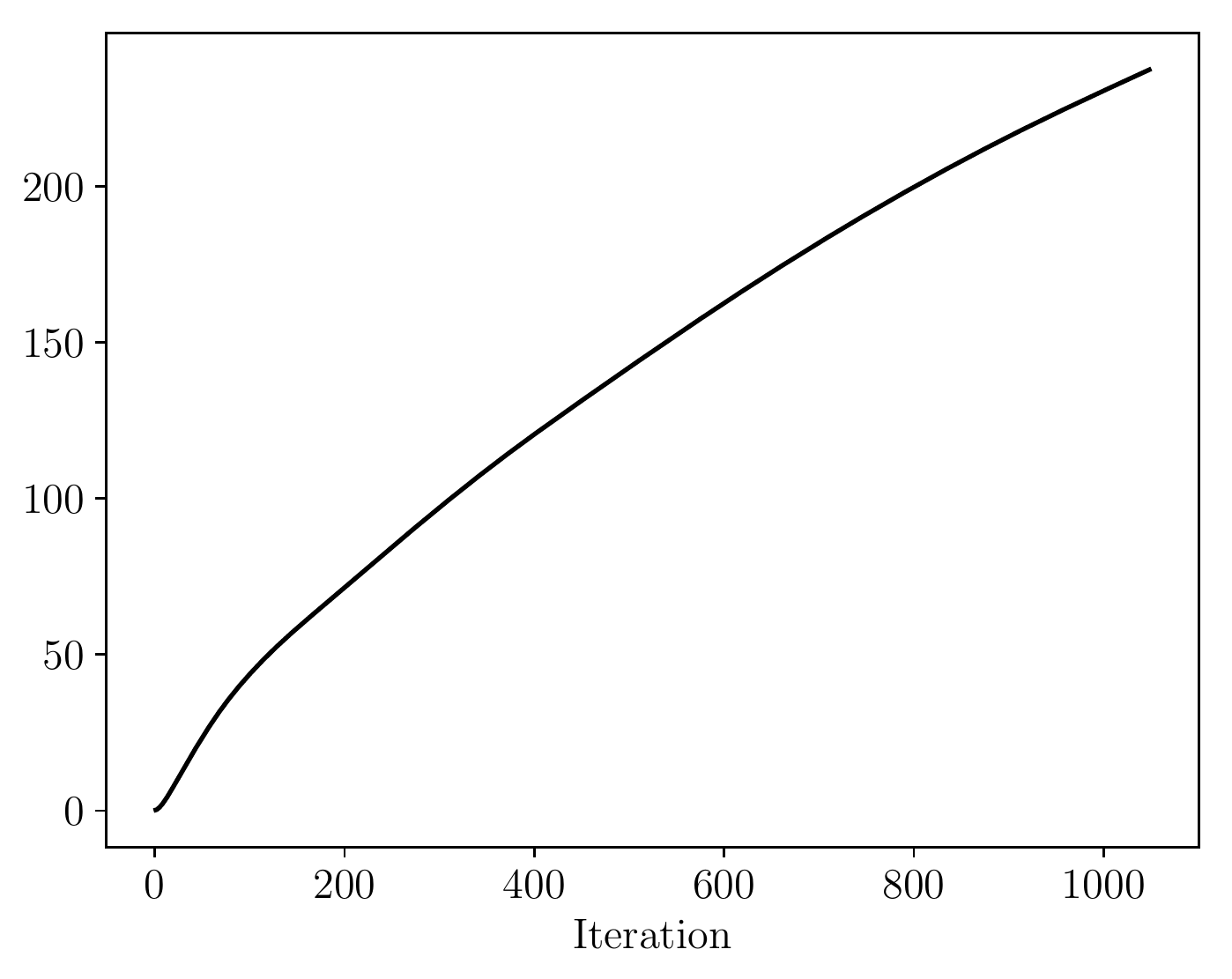}
            \subcaption{Free energy, \cref{Elaw}.}
        \end{minipage}%
        \caption{%
            Evolution of $M(\phi)$ and $E_m(\phi)$ as a function of time in the case of a 3D flow through a porous medium (Figure \ref{porousIso}).
            In this case, the mass increases linearly because we impose a constant mass inflow at the pore.
            The free energy increases as well, because the size of the interface grows,
            in agreement with both the mass and energy laws \eqref{Mlaw} and \eqref{Elaw}.
        }
        \label{porousMacro}
    }
\end{figure}

\subsubsection{Nucleation processes with complex boundaries}%
\label{ssub:nucleation_processes_with_complex_boundaries}

The last problem we study is the process of phase separation in a domain with
complex boundary characterized by different length scales. Specifically, we
consider a domain defined by the coastline of the two islands that form the
United Kingdom and Ireland. Starting from a satellite black and white
picture, we extracted the isolines that define the contour of the different
islands, which we passed to the FreeFem++ mesh generator to obtain a
triangular mesh (for this, we based our code on a FreeFem++ example for the
Leman lake).
At the boundary we consider the contact angles $\theta = \pi/4, \pi/2, 3 \pi/4$,
and we assume that the phase field is initially set to a random
value at each grid point, drawn from a random normal distribution with
variance $0.1$.
A fixed mesh was used for this simulation,
and the parameters used were $b = 1000$, $\varepsilon = 0.02$, $\Delta E_{\min} =
0.02$, $\Delta E_{\max} = 0.04$, $f=\sqrt{2}$, $\Delta t_{\min} = 0$, $\Delta t_{\max} = 1$.

The evolution of the phase field and of the chemical potential in the case $\theta = \pi/4$,
obtained with the adaptive time-stepping scheme~\ref{algorithm:timeStepAlgo},
is presented in \cref{fig:nucleation_iso}.
For each of the contact angles considered,
we also ran a simulation with the fixed time step $\Delta t = 0.01$,
using the method OD1-W instead of OD2-W
to benefit from the stabilizing effect introduced by the philic numerical dissipation of OD1-W.
We note in particular that OD2-W is unstable for the selected value of $\Delta t$,
with oscillations appearing in the energy curves from the first iterations,
and that the time step would have to be reduced significantly to ensure stability.
The final configurations (time 500) are presented in~\cref{fig:nucleation:final_solution} for the three contact angles considered.
We observe that the final configurations
are different depending on whether or not an adaptive time step is used,
which can be attributed to the high sensitivity of the solution to perturbations of the initial condition chosen for this test case;
the areas where separation of the phases first occurs is influenced by numerical errors in the early stages of the simulation.

Simulation data are presented in \cref{fig:nucleationMacro}.
With an adaptive time-stepping scheme,
it appears from \cref{fig:nucleationMacro} (a) that, overall,
the time step increases steadily as the frequency of coalescence events decreases.
At specific times,
the time step decreases slightly in order to accurately capture the evolution.
As expected,
the total free energy has a roughly constant negative slope when plotted against the iteration number.
Here too, we observe a small discrepancy between the fixed and adaptive cases,
which is consistent with differences observed at the final time in~\cref{fig:nucleation:final_solution}.

The CPU times corresponding to the simulations presented in this section are displayed in~\ref{table:cpu_times_uk}.
For the parameters selected,
the adaptive time-stepping scheme leads to a lower computational cost.
\begin{table}[hb]%
    \centering
    \begin{tabular}{|l|l|l|}
        \hline
        Contact angle & Adaptive time step & Fixed  time step \\
        \hline
        $\pi/4$ & 36:13:15 & 65:48:44  \\ \hline
        $\pi/2$ & 32:58:01 & 65:24:10 \\ \hline
        $3\pi/4$ & 36:43:13 & 67:00:50 \\ \hline
    \end{tabular}
    \caption{%
        CPU times (hh:mm:ss) using an Intel i7-3770 processor for the simulations presented in \cref{ssub:nucleation_processes_with_complex_boundaries} (nucleation in a domain with complex boundaries),
        with or without time-step adaptation.
        The method OD2-W was used for the simulations with an adaptive time step,
        and the method OD1-W was used for the simulations with a fixed time step.
        In all cases, we used a fixed mesh with mesh size $h = 0.01$
        (the size of the domain is roughly 5 by 5)
        and $P1$ elements, leading to 181587 unknowns.
        The parameter $\varepsilon$ was set to $0.02$.
    }%
    \label{table:cpu_times_uk}
\end{table}
This test demonstrates the advantage of using a finite-element approach,
as it would have been very complicated to solve the CH equation in the
geometries we consider here with e.g.~a spectral method or finite differences.

\begin{figure}
    \centering
    \includegraphics[width=0.19\textwidth]{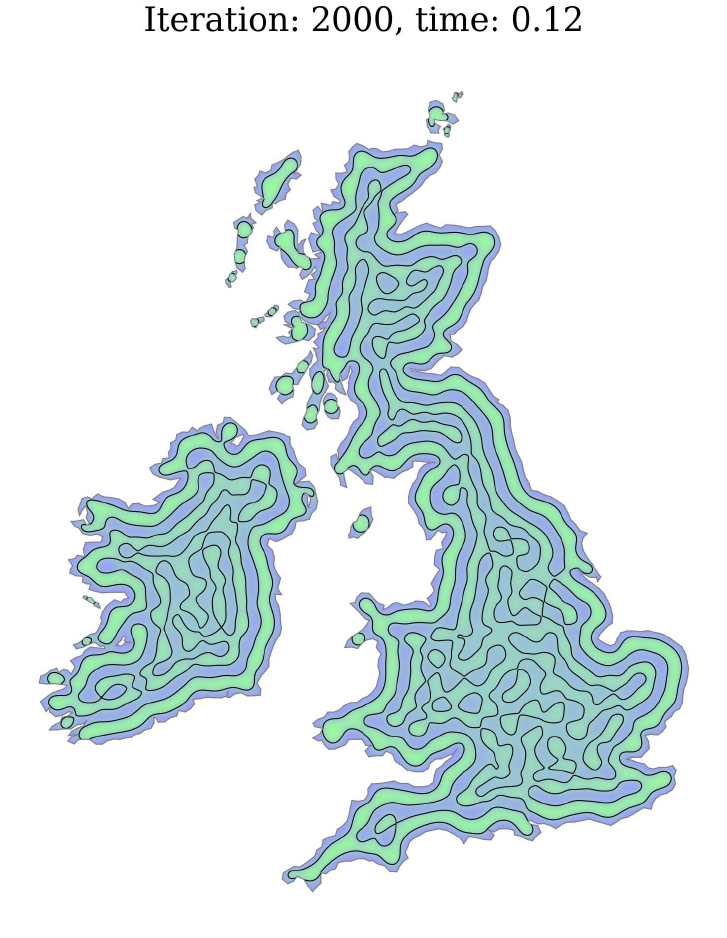}
    \includegraphics[width=0.19\textwidth]{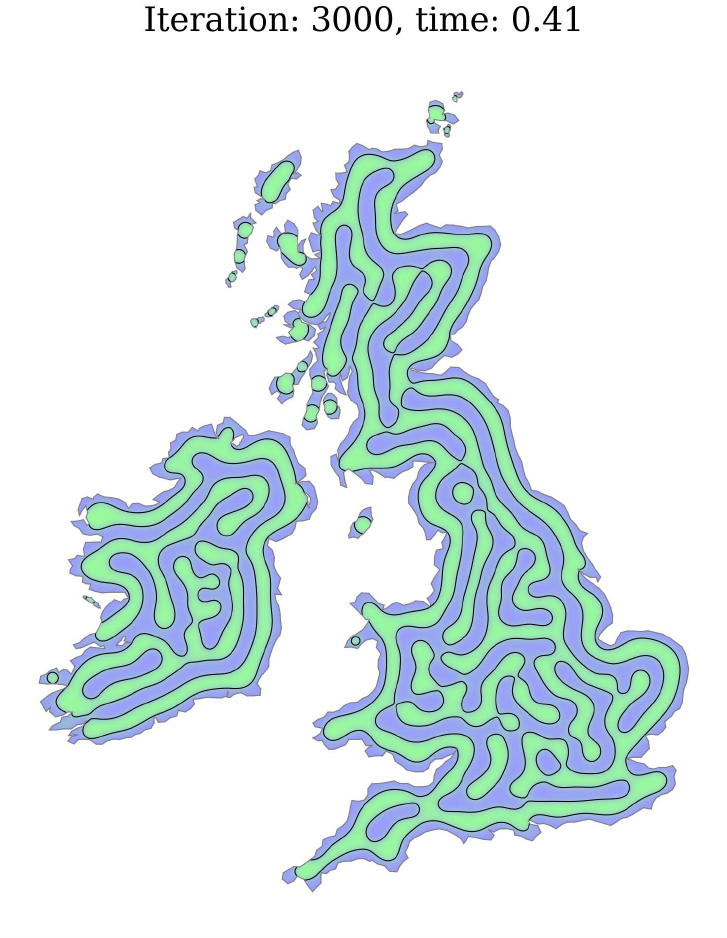}
    \includegraphics[width=0.19\textwidth]{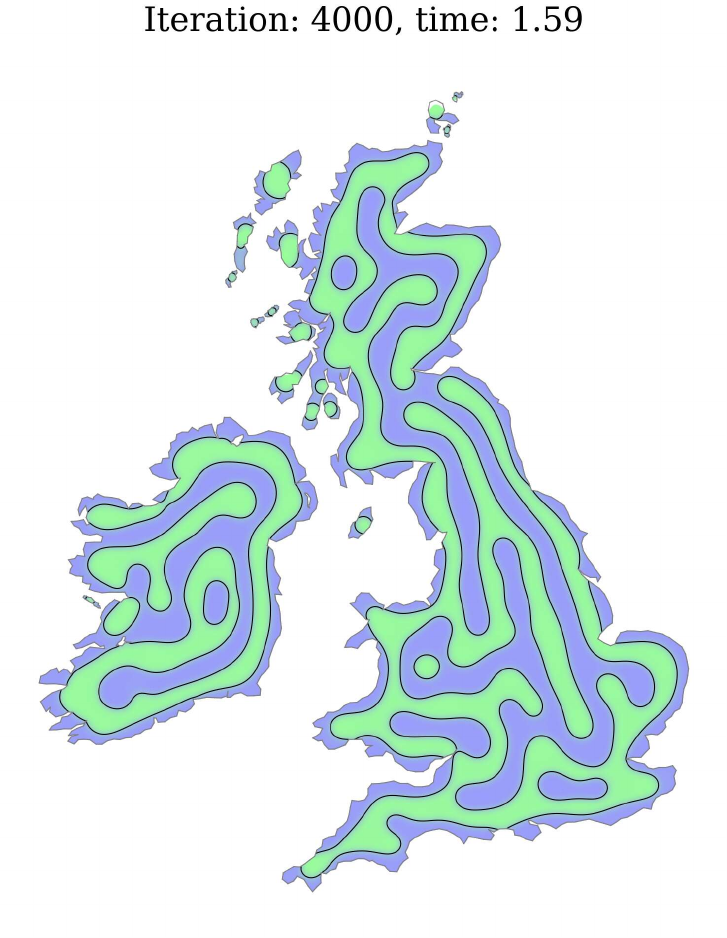}
    \includegraphics[width=0.19\textwidth]{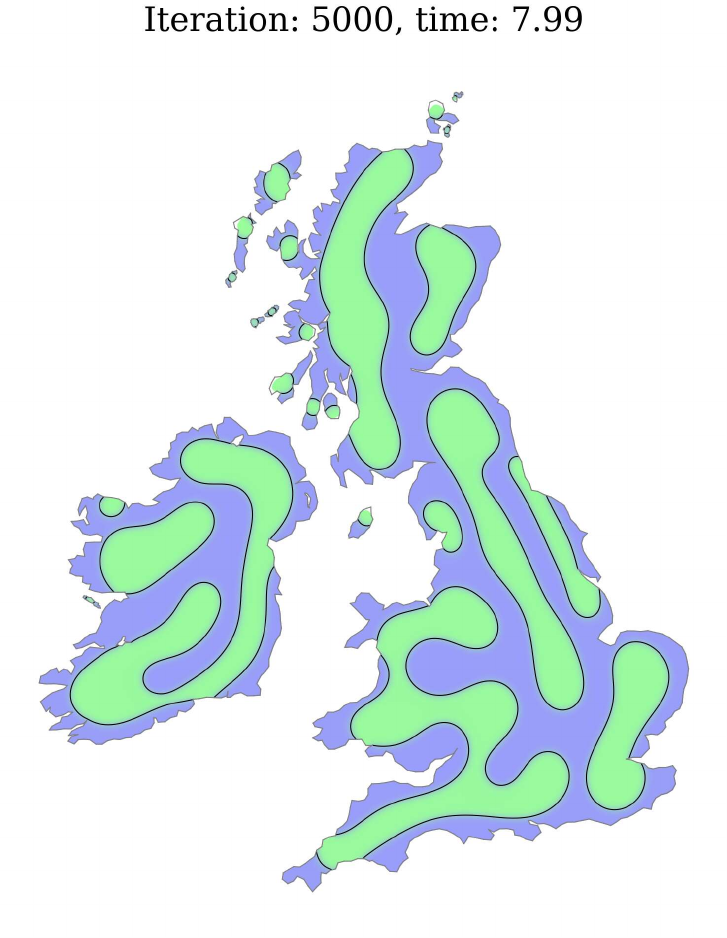}
    \includegraphics[width=0.19\textwidth]{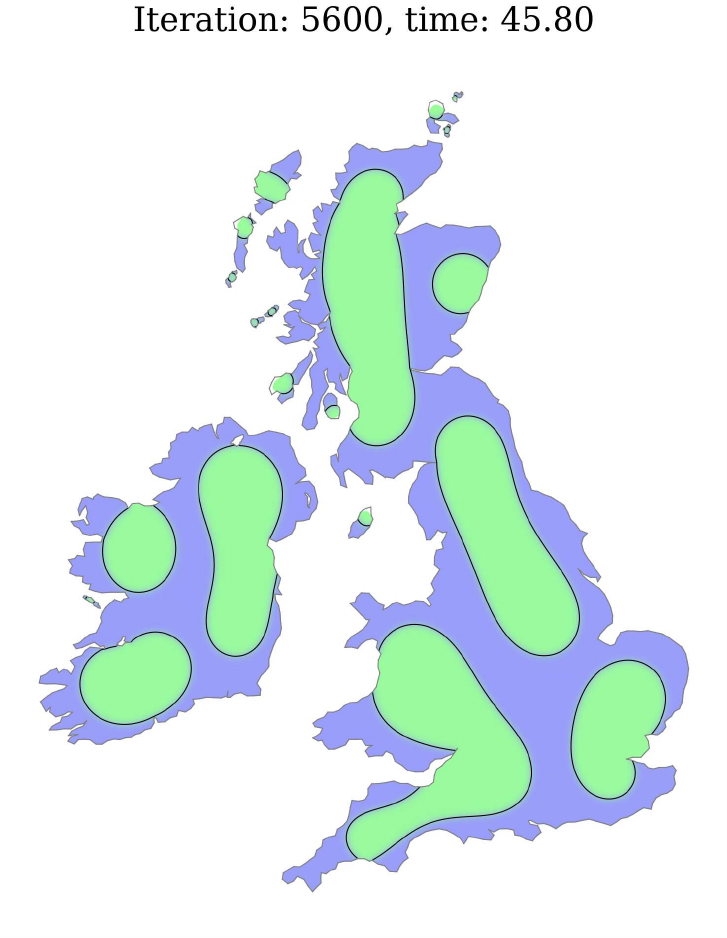}

    \includegraphics[width=0.19\textwidth]{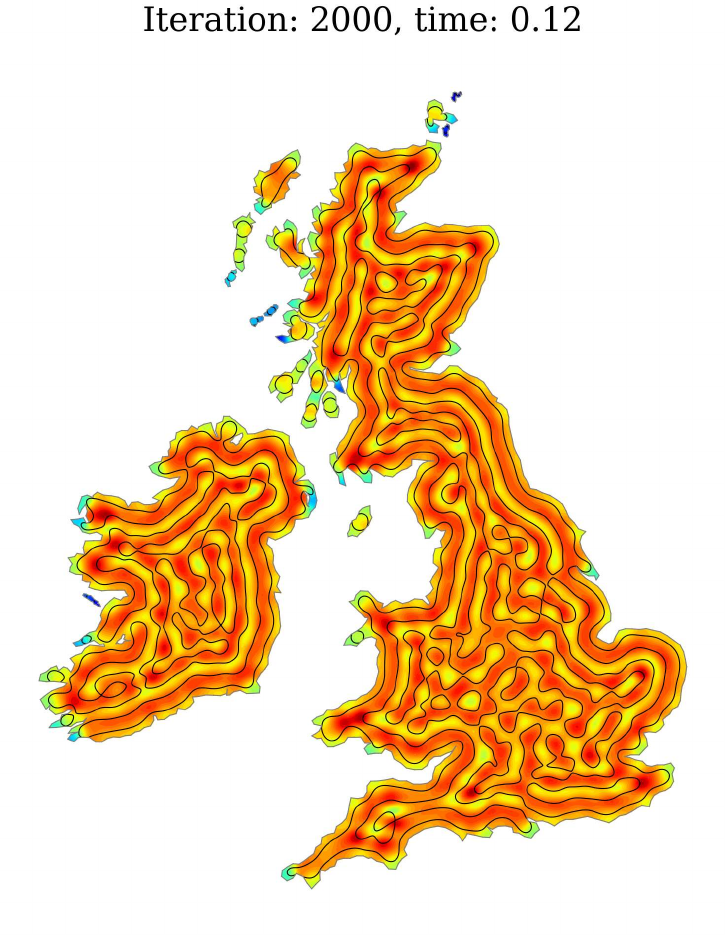}
    \includegraphics[width=0.19\textwidth]{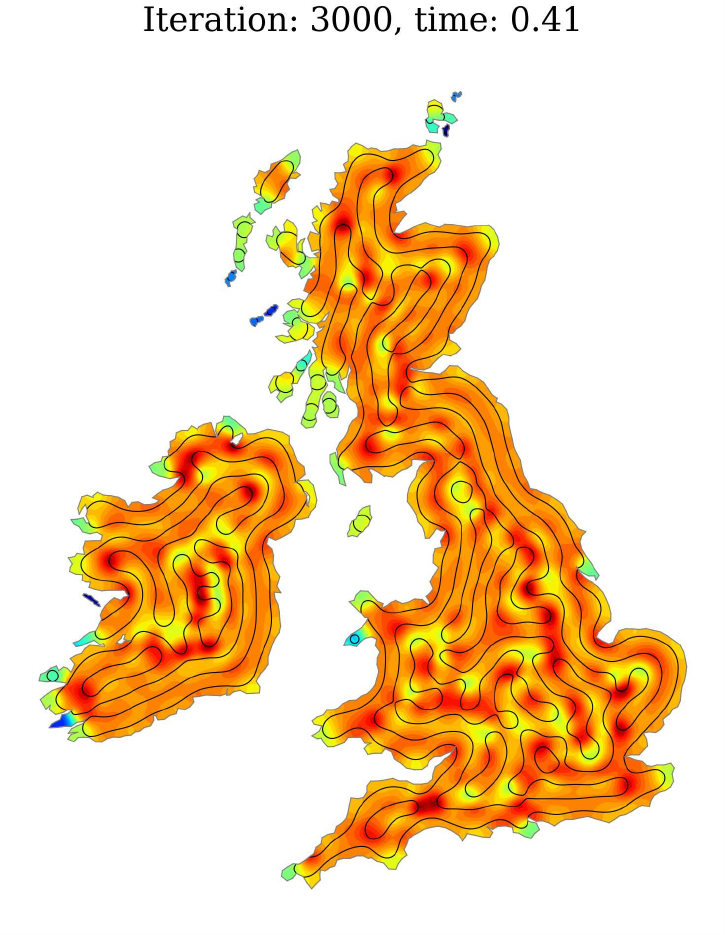}
    \includegraphics[width=0.19\textwidth]{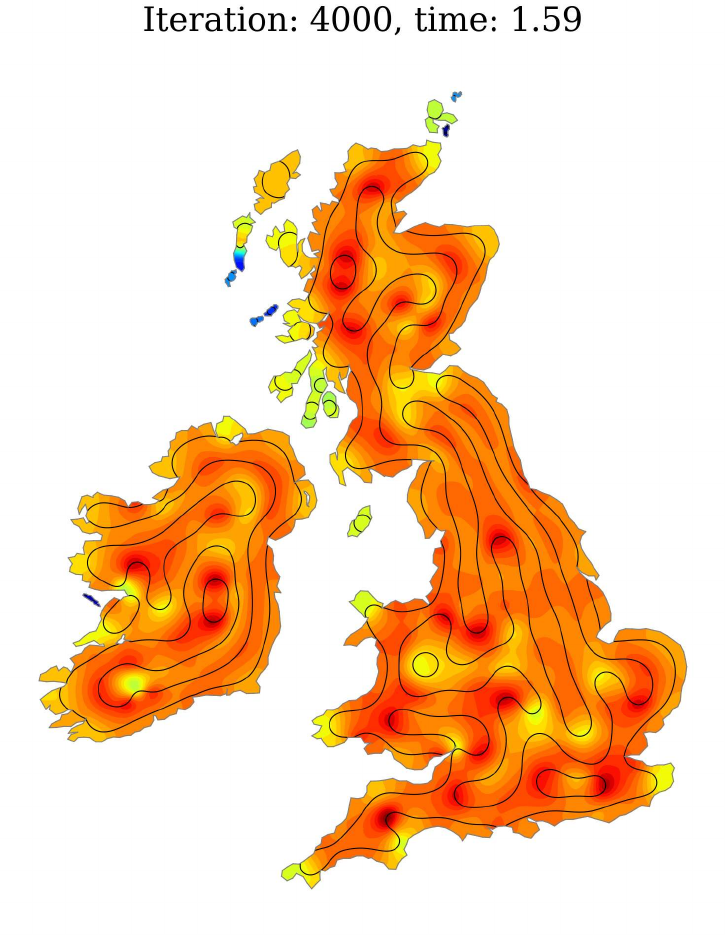}
    \includegraphics[width=0.19\textwidth]{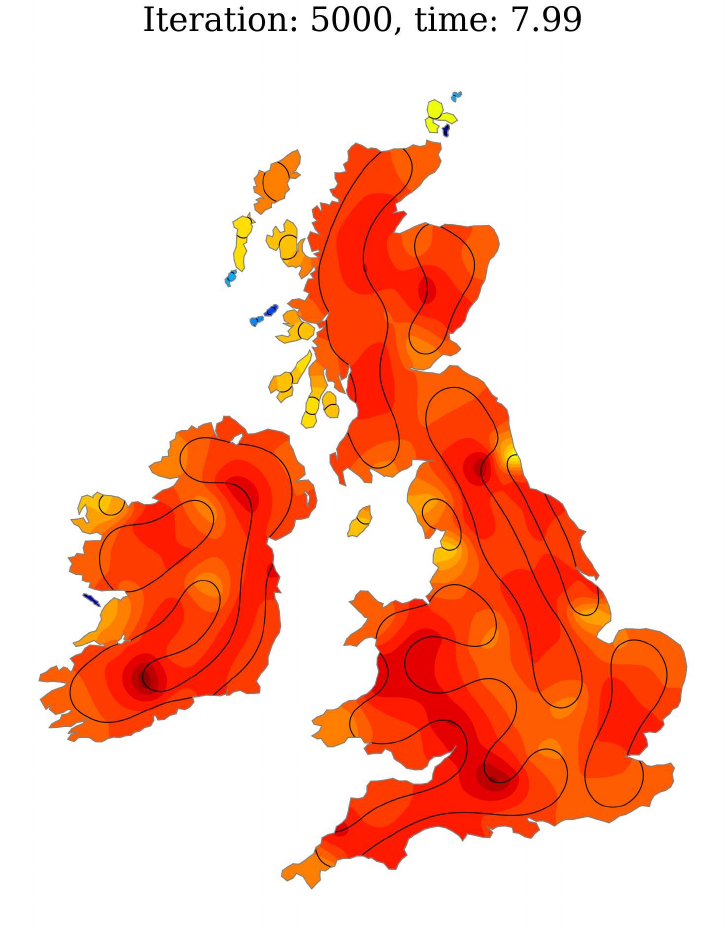}
    \includegraphics[width=0.19\textwidth]{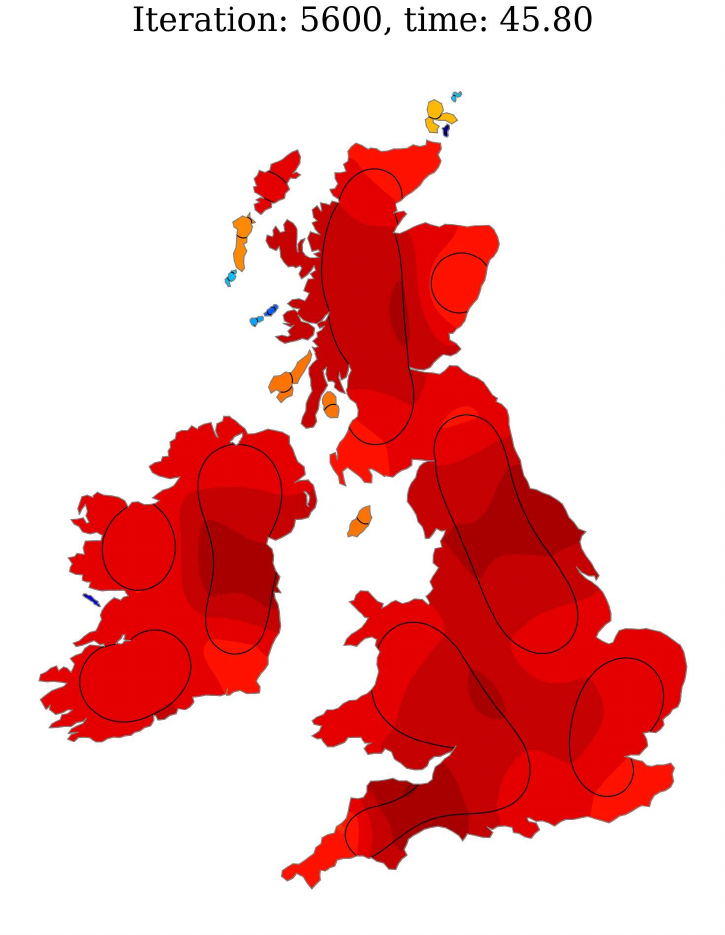}
    \caption{Evolution of the phase field and chemical potential for
    the nucleation in a domain with complex boundaries,
    when starting from a random distribution.
    As before, blue corresponds to $\phi=1$ and green to $\phi=-1$.
    The contact angle imposed at the boundaries is $\theta = \frac{\pi}{4}$.
  }
  \label{fig:nucleation_iso}
\end{figure}
\begin{figure}
  \centering {%
      \fcolorbox{black}{white}{\begin{minipage}[b]{.31\linewidth}
            \centering
            \includegraphics[width=.45\linewidth]{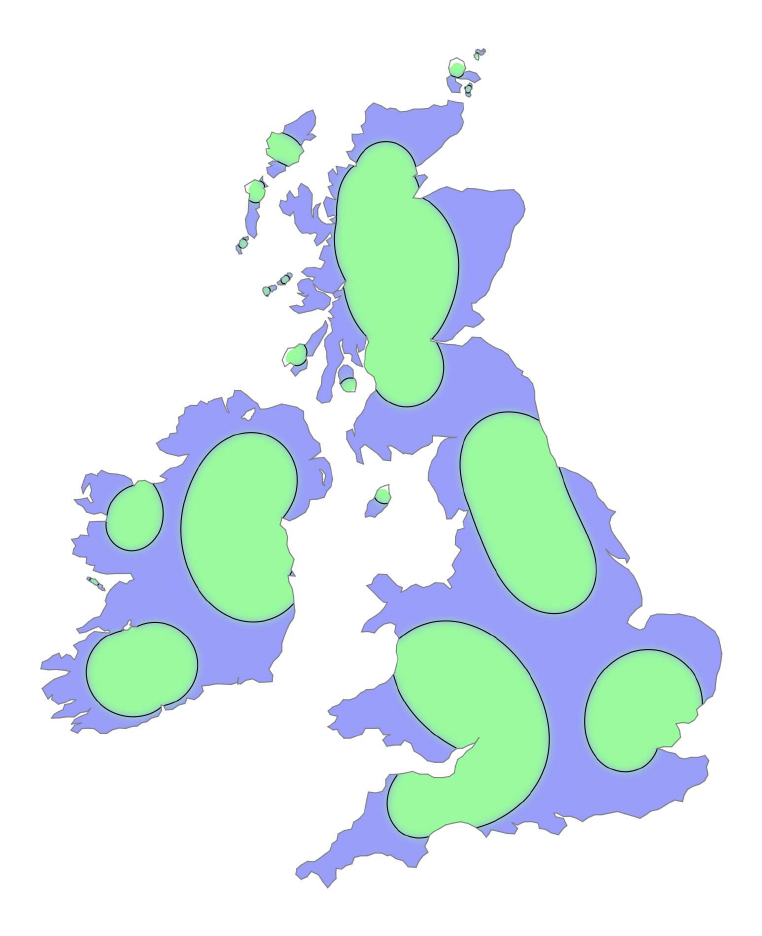}
            \includegraphics[width=.45\linewidth]{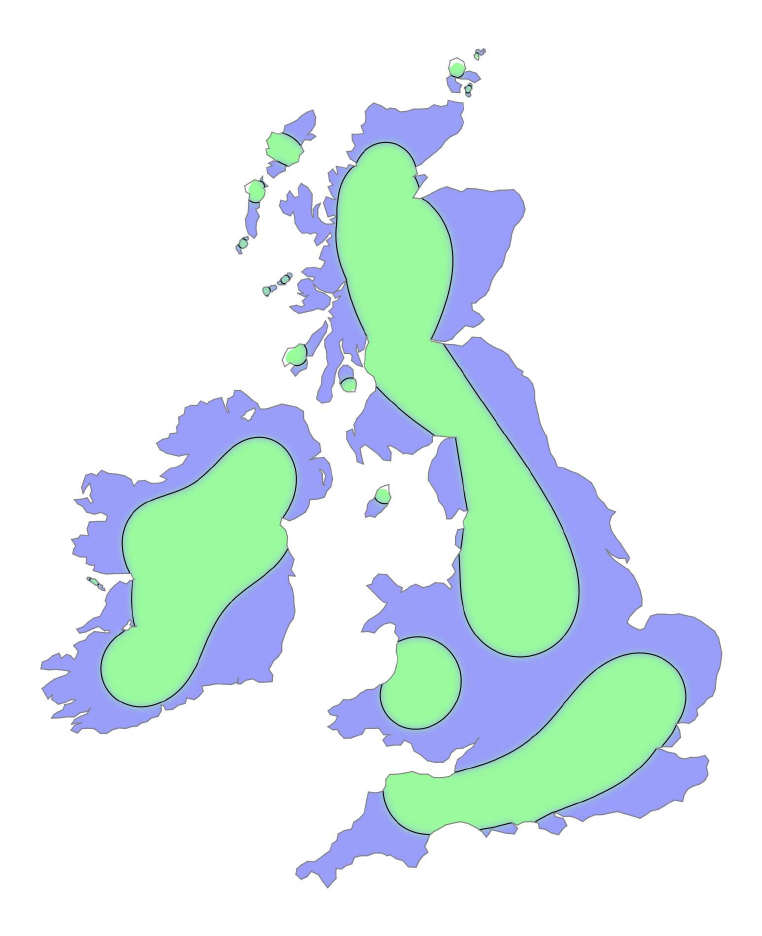}
            \subcaption{$\theta = \pi/4$}
        \end{minipage}}\hspace{.1cm}%
        \fcolorbox{black}{white}{\begin{minipage}[b]{.31\linewidth}
            \centering
            \includegraphics[width=.45\linewidth]{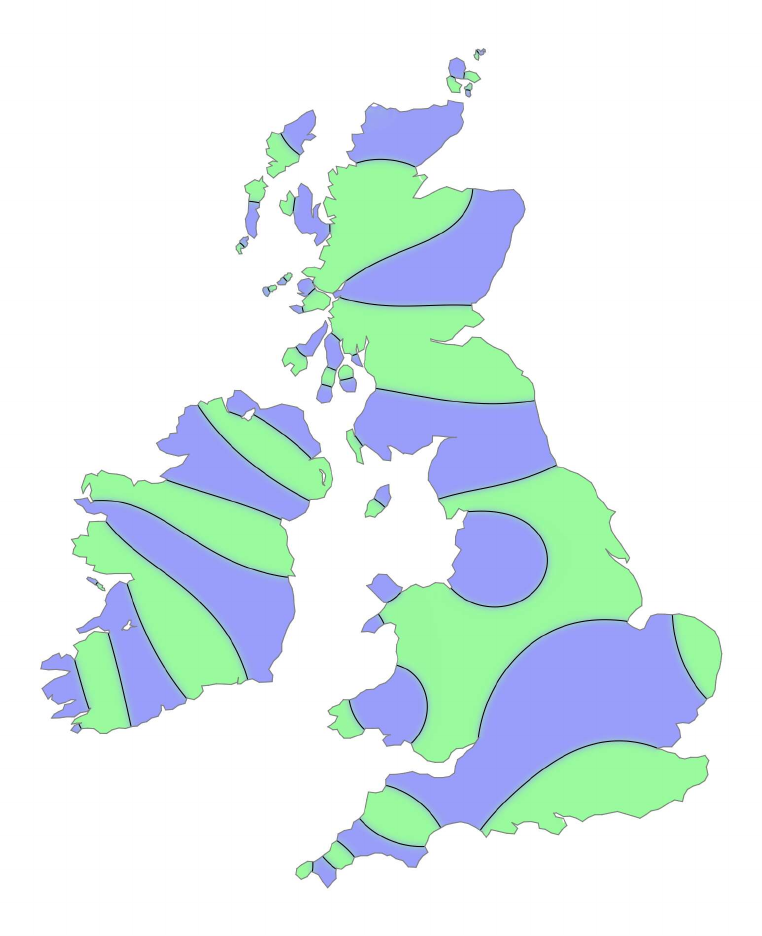}
            \includegraphics[width=.45\linewidth]{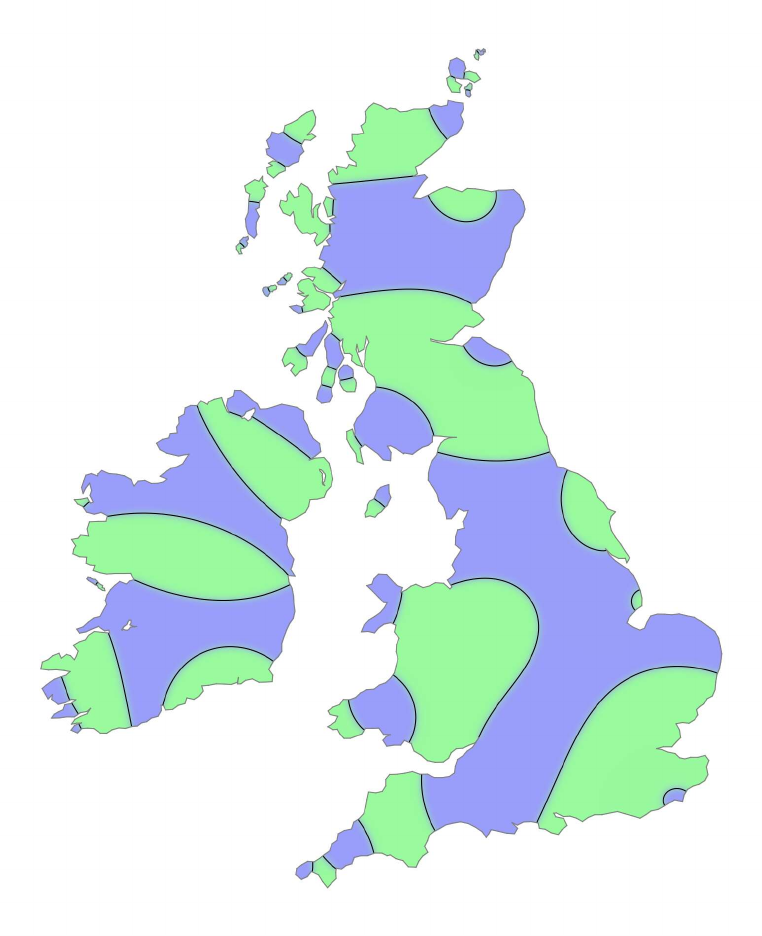}
            \subcaption{$\theta = \pi/2$}
        \end{minipage}}\hspace{.1cm}%
        \fcolorbox{black}{white}{\begin{minipage}[b]{.31\linewidth}
            \centering
            \includegraphics[width=.45\linewidth]{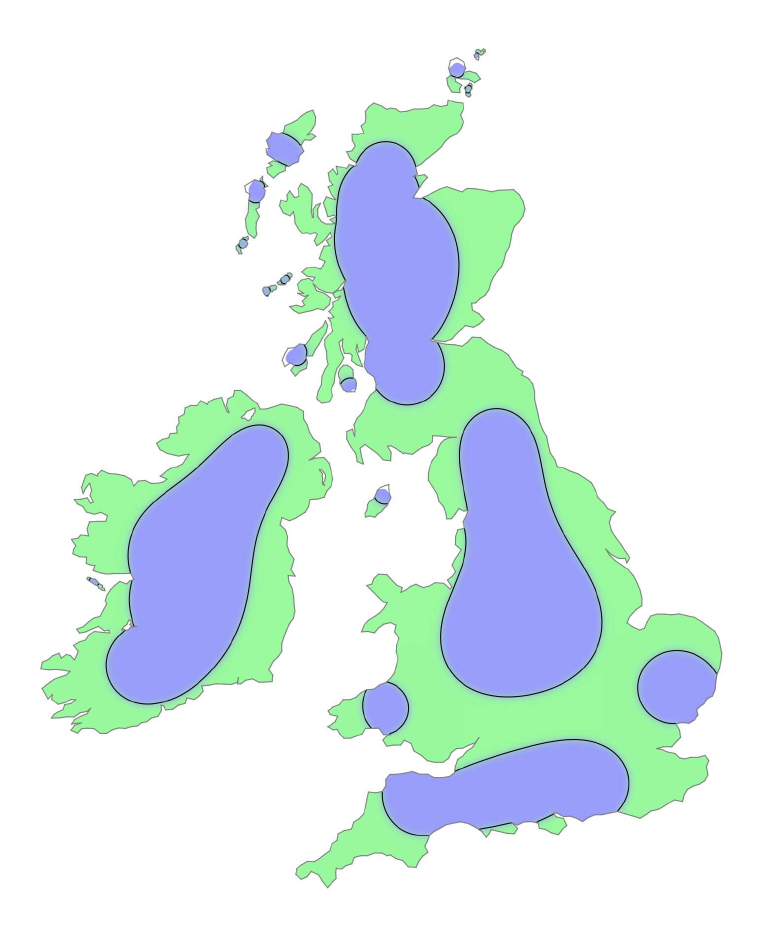}
            \includegraphics[width=.45\linewidth]{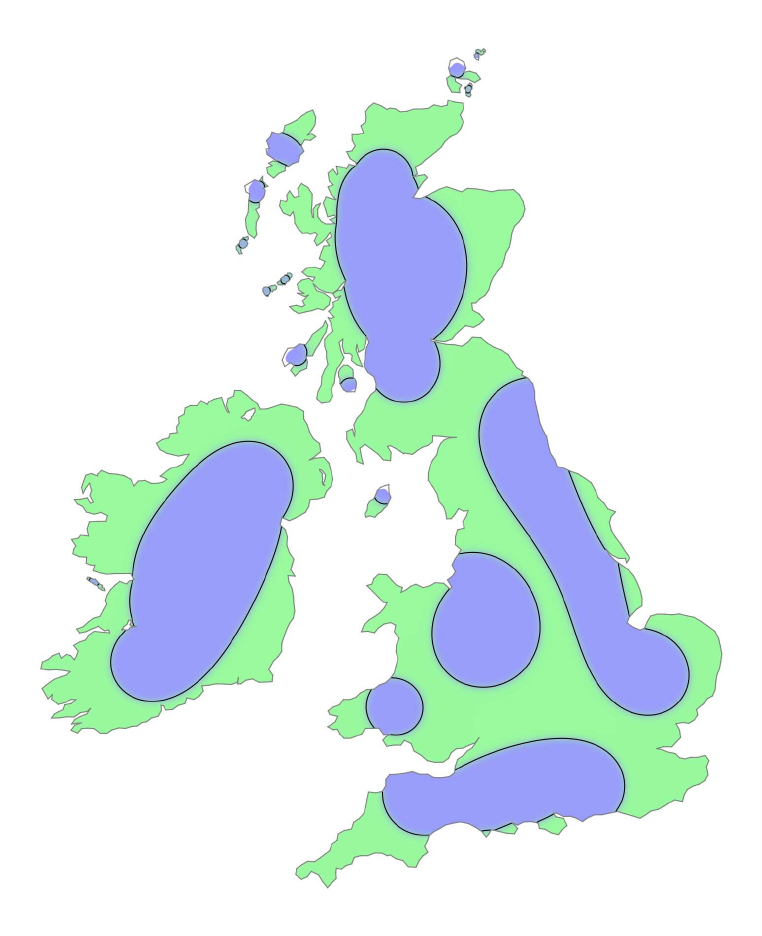}
            \subcaption{$\theta = 3\pi/4$}
        \end{minipage}}%
        \caption{%
            Comparison of the solutions at the final time,
            with (left) and without (right) time-step adaptation.
            At the initial time, the phase field is set to a random value at each grid point,
            drawn from a random normal distribution with variance $0.1$.
            Although one could expect the solutions for $\theta = \pi/4$ and $\theta = 3\pi/4$ to differ only by a sign,
            this is not the case.
            There is also a significant difference between the solutions obtained with and without time-step adaptation.
            These differences can be explained by the sensitivity of the evolution to perturbations of the initial condition
            and to numerical errors in the early stage of the simulation.
        }
        \label{fig:nucleation:final_solution}
    }
\end{figure}

\begin{figure}
  \centering
        \begin{minipage}[b]{.48\linewidth}
            \centering
            \includegraphics[width=\linewidth]{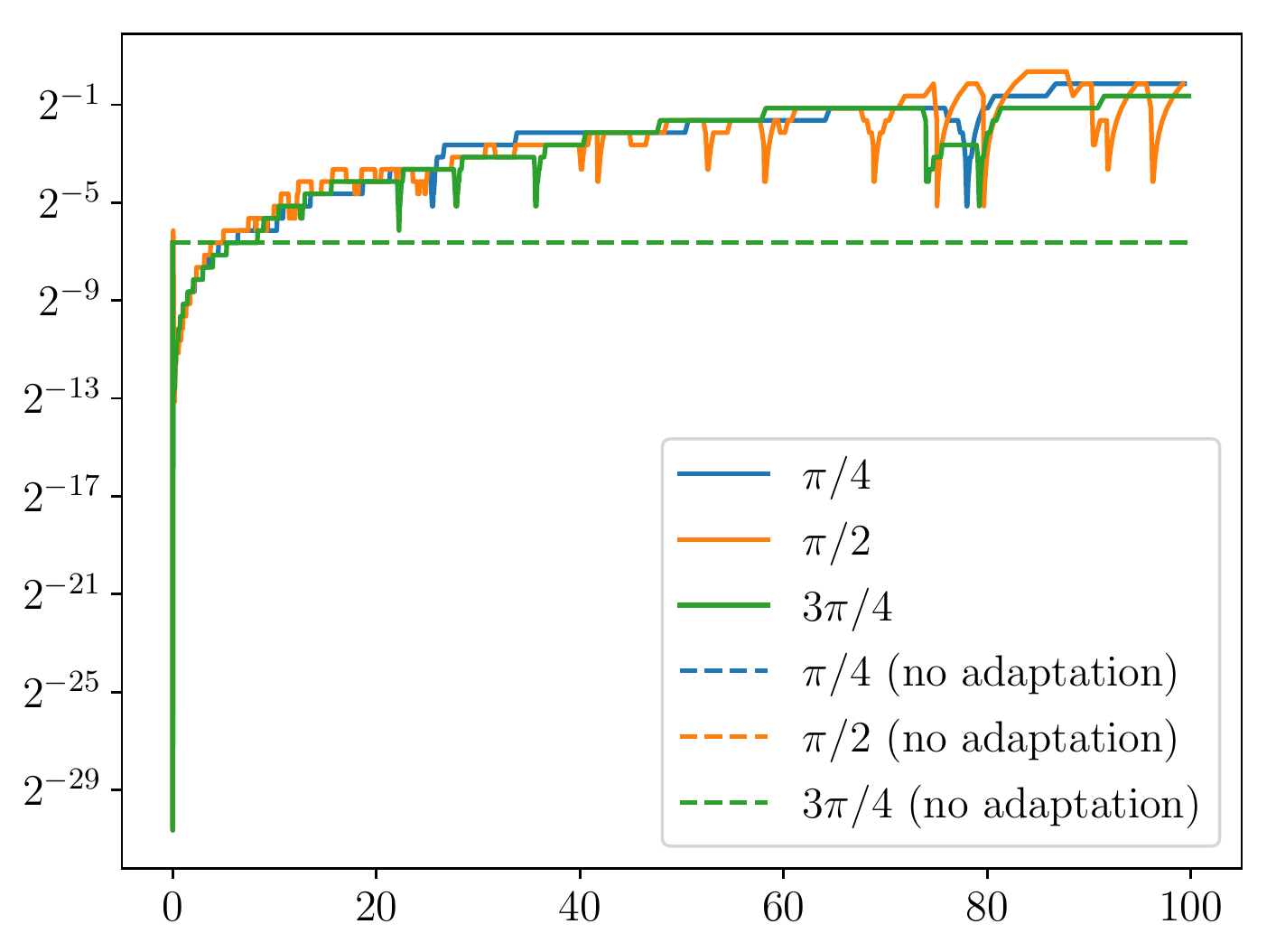}
            \subcaption{Time step (\cref{algorithm:timeStepAlgo}) vs time}
        \end{minipage}%
        \begin{minipage}[b]{.48\linewidth}
            \centering
            \includegraphics[width=\linewidth]{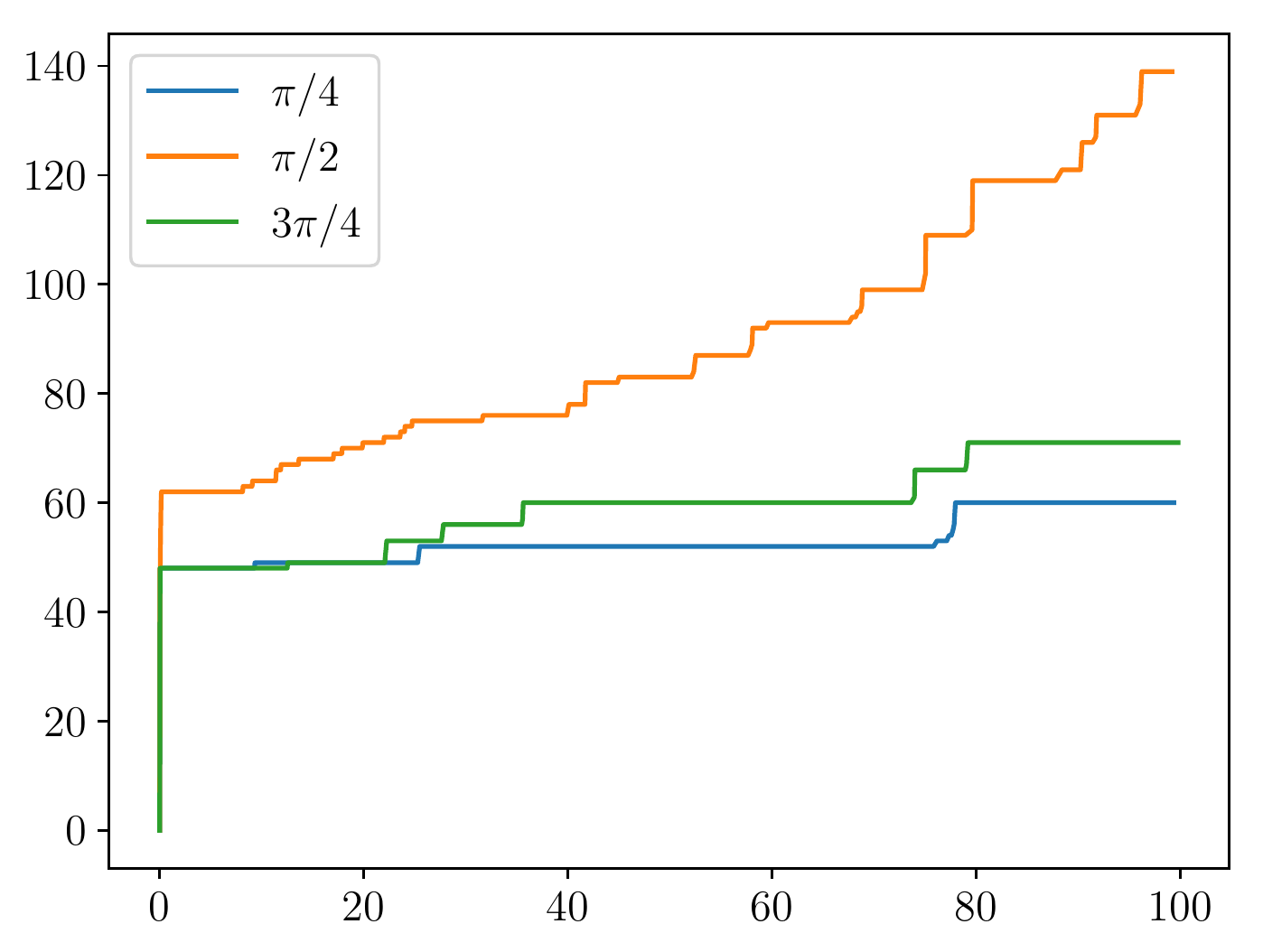}
            \subcaption{\# of recalculations vs time.}
        \end{minipage}

        \begin{minipage}[b]{.48\linewidth}
            \centering
            \includegraphics[width=\linewidth]{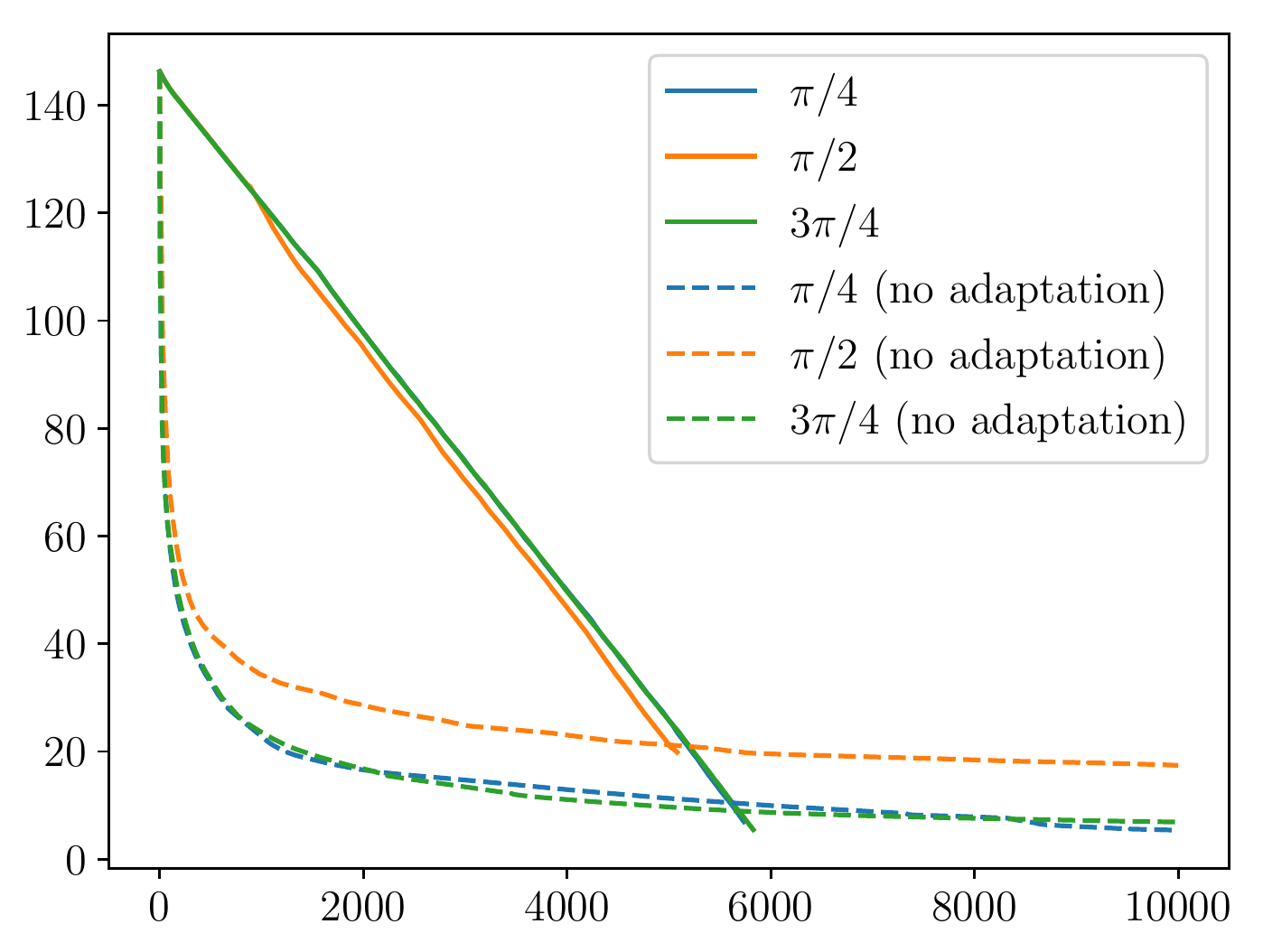}
            \subcaption{Total energy vs iteration.}
        \end{minipage}%
        \begin{minipage}[b]{.48\linewidth}
            \centering
            \includegraphics[width=\linewidth]{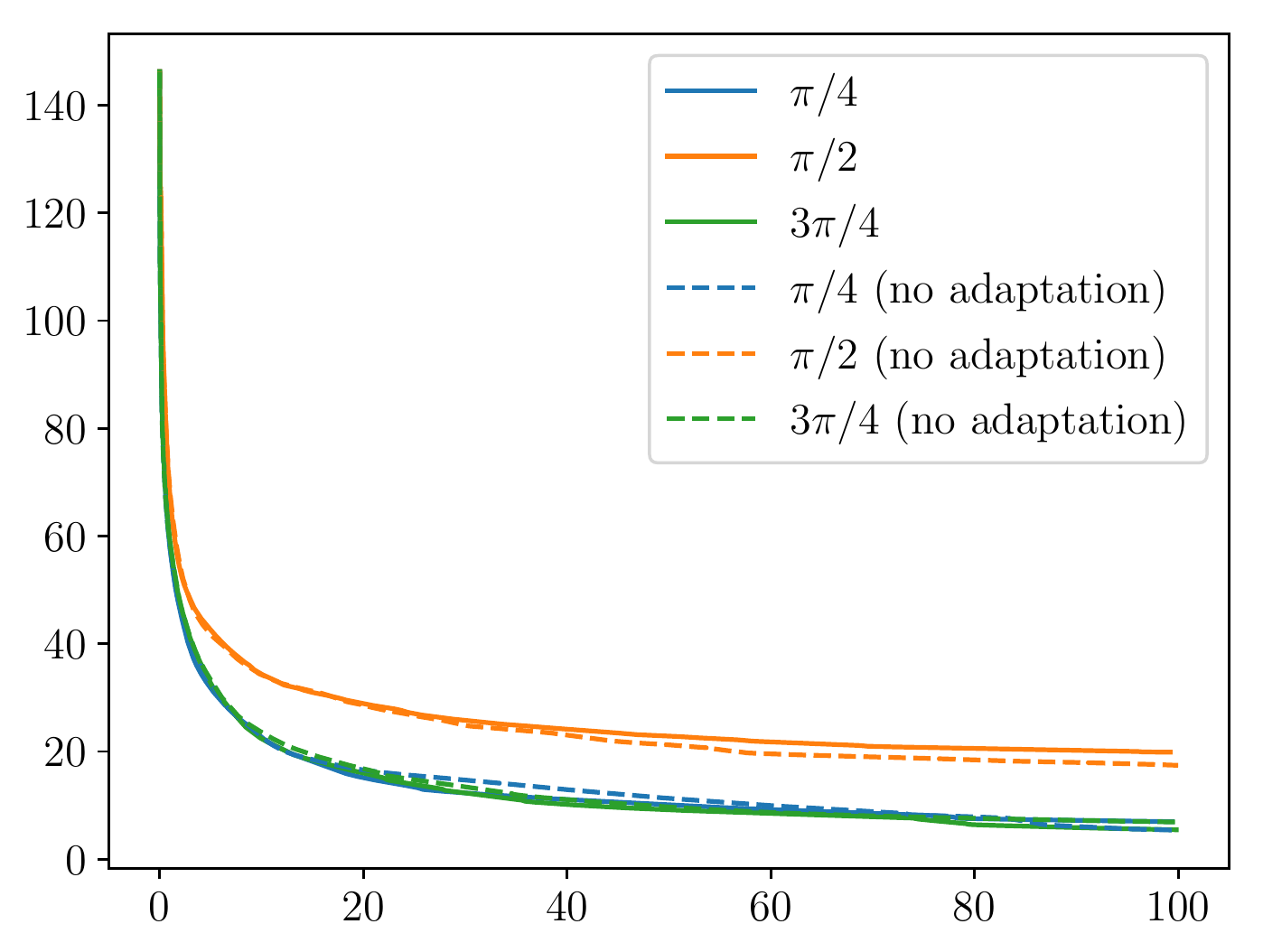}
            \subcaption{Total energy vs time.}
        \end{minipage}

        \begin{minipage}[b]{.48\linewidth}
            \centering
            \includegraphics[width=\linewidth]{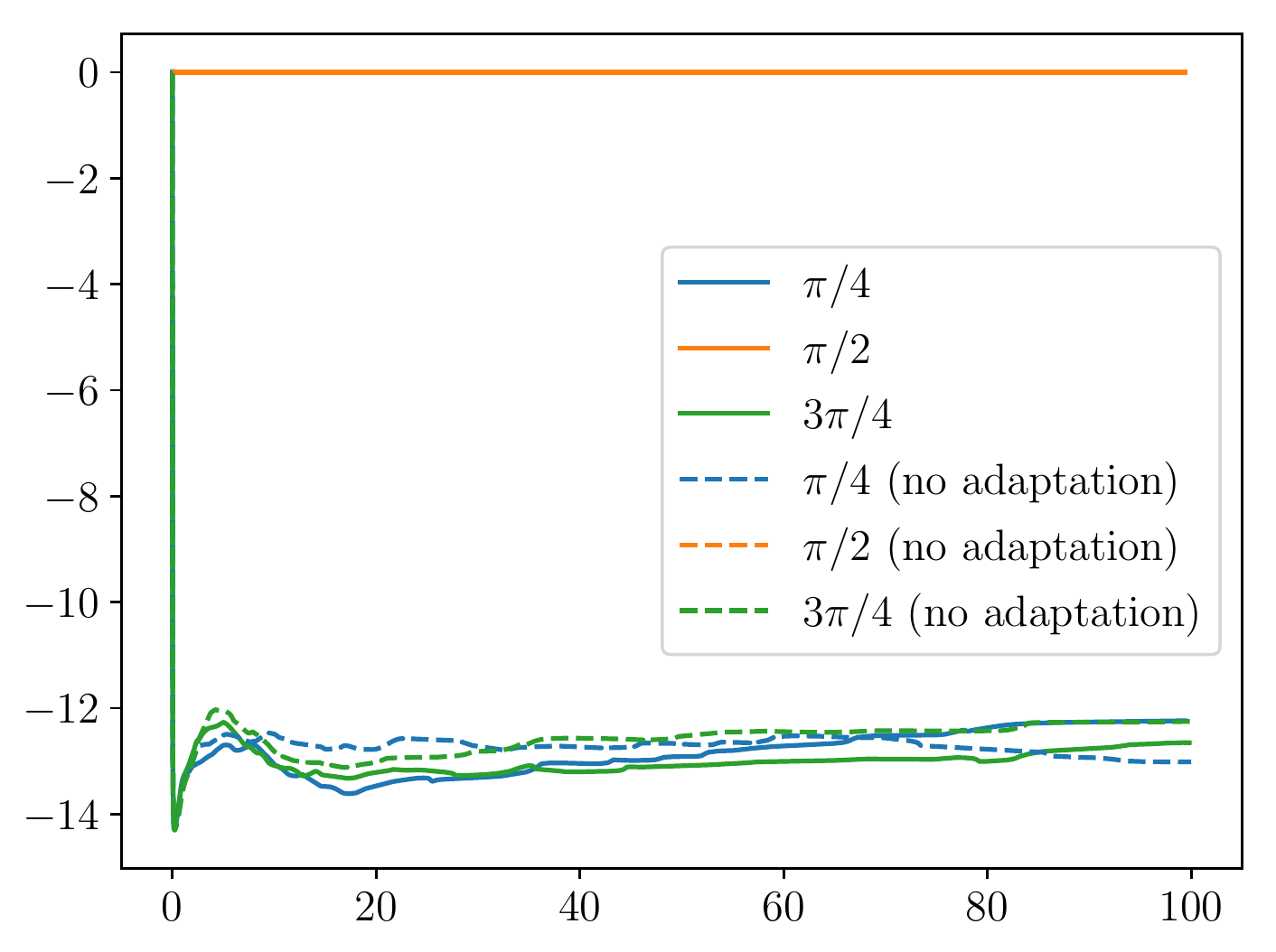}
            \subcaption{Wall energy vs time.}
        \end{minipage}%
        \begin{minipage}[b]{.48\linewidth}
            \centering
            \includegraphics[width=\linewidth]{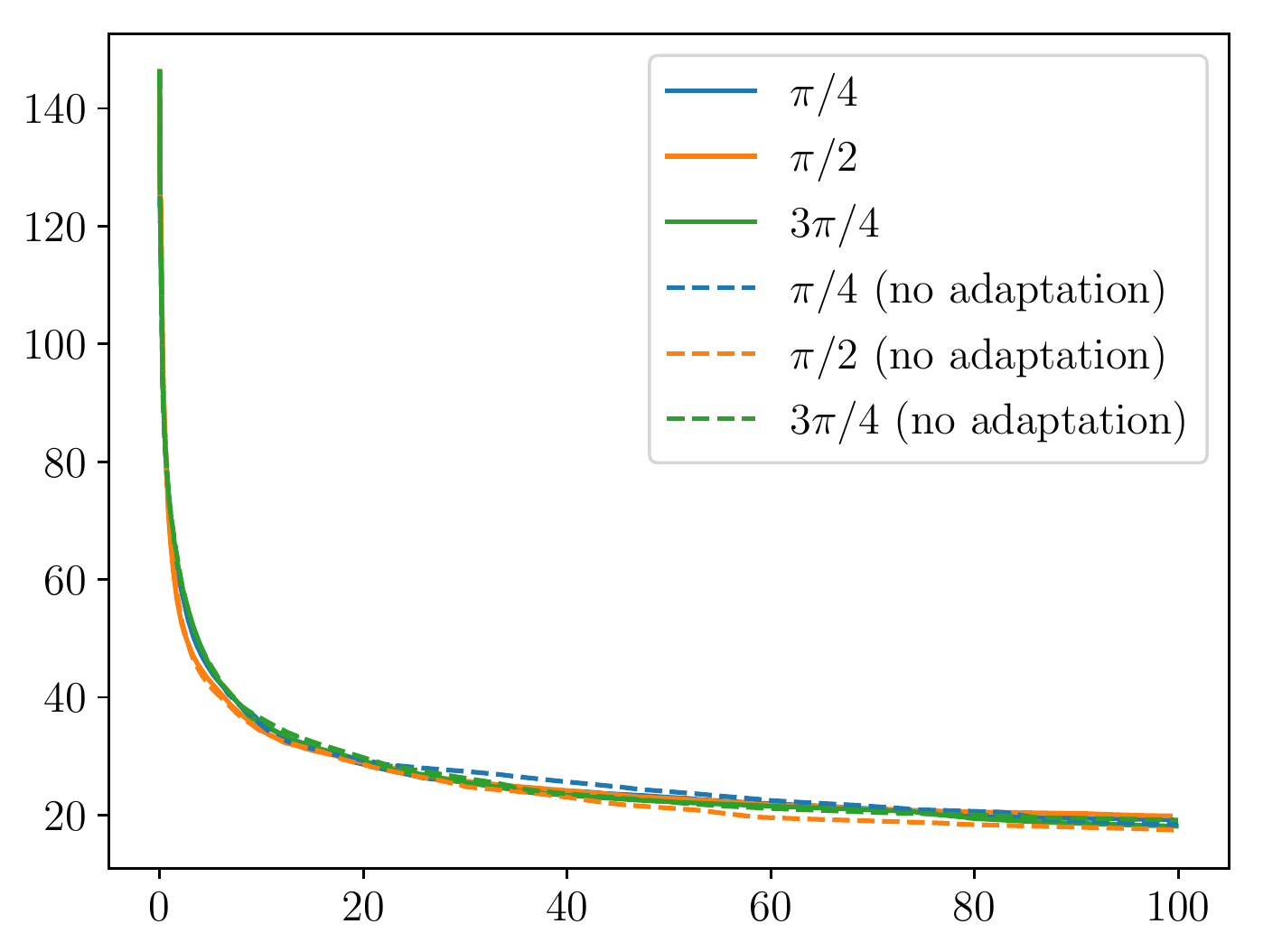}
            \subcaption{Mixing energy vs time.}
        \end{minipage}%
    \caption{%
        Simulation data for the numerical experiments presented in \cref{ssub:nucleation_processes_with_complex_boundaries} (nucleation in a geometry with complex boundaries).
        Overall, the time step increases steadily when the adaptive time-stepping scheme is used,
        which is consistent with the decreasing frequency of coalescence events.
        The time step is refined at times to ensure that the incremental decrease of free energy at each iteration is approximately constant.
    }
    \label{fig:nucleationMacro}
\end{figure}

\section{Conclusions}%
\label{sec:conclusions}
We have proposed a new, fast and reliable numerical
method to solve the CH equation with a wetting boundary condition.
Our method is a generalisation of the OD2 scheme introduced
in~\cite{guillen2013linear},
which considered only the homogeneous condition $\grad \phi \cdot \vect n = 0$.
In addition, we have designed a new time-step
adaptation algorithm, leading to a scheme that is adaptive both in space and
time, and we have shown that this scheme is mass-conservative and
satisfies a consistent discrete energy law.

We checked the validity of the proposed numerical scheme with several examples.
First we considered the relaxation towards equilibrium of a sessile droplet and the coalescence of two sessile droplets
on flat, chemically homogeneous substrates;
then we considered several multiphase systems in complex geometries or surrounded by chemically heterogeneous substrates.

Compared to finite differences or spectral approaches, the method introduced
here has the advantage that it can be used without modification with complex geometries.
Furthermore, the numerical scheme we have proposed can easily be extended to
include at least two additional features. First, a linear, energy-stable,
second-order scheme could be developed for the three-component CH model with
wetting boundary conditions, building on the work
of~\cite{boyer2006study,boyer2011numerical}. Second, we remark that in our
work, we considered a regime in which contact line motion is controlled by
diffusive interfacial fluxes, or in other words, we considered a large
diffusivity limit, where any possible advection effects are neglected. To
account for such effects the model must be appropriately modified to include
an advection term coupled to the Navier-Stokes equations~\cite{Andersonetal,Seppecher,jacqmin2000contact,JasnowVinals,Anderson,Wylock2012}.
Such generalisations are indeed possible within the proposed numerical scheme
and we hope to address these and related issues in future studies.

\section{Acknowledgements} We are grateful to Dr. Srikanth Ravipati for useful
discussions, the FreeFem++ community for their help and Miguel Alfonso
Oliveira from Imperial College (IC) for his help in using the IC HPC
facilities. We acknowledge financial support by the UK Engineering and
Physical Sciences Research Council (EPSRC) through Grants No.~EP/L027186 and
EP/L020564 and European Research Council (ERC) via Advanced Grant No. 247031.

\appendix
\gdef\thesection{\Alph{section}} 
\makeatletter
\renewcommand\@seccntformat[1]{Appendix \csname the#1\endcsname.\hspace{0.5em}}
\makeatother

\section{Proof of \texorpdfstring{\cref{theorem:well-posedness_modified_boundary_condition}}{existence theorem}}
\label{sec:proof_of_well_posedness_theorem}

Before presenting the proof,
we recall a particular Sobolev embedding for smooth bounded domains;
see e.g.~\cite[Chap. 5]{evans2010partial}.
Let $d \geq 2$, $\emptyset \neq \Omega \subset \mathbb R^d$ be open with $C^1$ boundary,
and assume that $q < \infty$ if $d = 2$ or $q < p^*:= 2d/(d-2)$ if $d > 2$.
Then the following embedding is compact
\begin{align}
    \label{eq:appendix_sobolev_embedding}
    \quad  H^1(\Omega) \hookrightarrow L^{q}(\Omega).
\end{align}

We also recall two other well-known compactness results;
see e.g.~\cite{lions1969quelques}.
Let $X,Y,Z$ be Banach spaces with a compact embedding $X \subset Y$ and a continuous embedding $Y \subset Z$.
Then the following embeddings are compact:
\begin{subequations}
    \begin{align}
        \label{eq:galerkin_splitting_lyons_lemma_lp}
        & \{u\in \lp{2}{0,T;X} |\, \derivative{1}[u]{t} \in \lp{2}{0,T;Z}\} \hookrightarrow \lp{2}{0,T;Y}, \\
        \label{eq:galerkin_splitting_lyons_lemma_cont}
        & \{u\in \lp{\infty}{0,T;X} |\, \derivative{1}[u]{t} \in \lp{2}{0,T;Z}\} \hookrightarrow C([0,T],Y).
    \end{align}
\end{subequations}
\begin{proof}
Without loss of generality, we assume that the mobility, $b$, is equal to $1$.
In the spirit of~\cite[Theorem 2]{elliott1996cahn}, we apply a Faedo-Galerkin approximation.
Let $\{\varphi_n\}_{n\in \mathbb N}$ and $\{\lambda_n\}_{n\in\mathbb N}$ denote the eigenfunctions and eigenvalues of
the Laplace operator with a homogeneous Neumann boundary condition, i.e.
\begin{equation}
    \label{eq:eigenpairs_laplacian_neumann}
    \left\{
        \begin{aligned}
            &- \laplacian \varphi_n = \lambda_n  \, \varphi_n \quad &&\text{ in }\Omega, \\
            &\grad \varphi_n \cdot \vect n = 0 \quad &&\text{ in } \partial \Omega,
        \end{aligned}
        \right.
\end{equation}
normalized such that
\begin{equation}
    \label{eq:eigenpairs_orthonormality}
    \int_{\Omega} \varphi_n \, \varphi_m \, \d \Omega = \delta_{mn}.
\end{equation}
We assume without loss of generality that $\lambda_1 = 0$.
To build an approximation of the solution to
\cref{eq:variational_equations_for_weak_formulation_phi,eq:variational_equations_for_weak_formulation_mu} in the finite-dimensional space $S_N := \Span{\varphi_1, \dots, \varphi_N}$,
we consider the following ansatz,
\begin{equation}
    \phi^N(t) = \sum_{n=1}^{N} a_n^N(t) \, \varphi_n, \qquad \mu^N(t) = \sum_{n=1}^{N} b_n^N(t) \, \varphi_n,
\end{equation}
and the variational formulation
\begin{subequations}
    \begin{align}
        \label{eq:galerkin_splitting_u}
        &\ip{\partial_t \phi^N}{\bar \phi} + \ip{\grad \mu^N}{\grad \bar \phi} = \ip{\dot m}{\bar \phi}_{\partial \Omega}
        \quad &\forall \bar \phi \in S_N, \\
        \label{eq:galerkin_splitting_v}
        &\ip{\mu^N}{\bar \mu} = \energyB \ip{\grad \phi^N}{\grad \bar \mu} + \energyA \ip{f_m(\phi^N)}{\bar \mu} + \ip{f_w(\phi^N)}{\bar \mu}_{\partial \Omega}
        \quad &\forall \bar \mu \in S_N, \\
        \label{eq:galerkin_boundary_condition}
        &\ip{\phi^N(0)}{\bar \phi} = \ip{\phi_0}{\bar \phi}
        \quad &\forall \bar \phi \in S_N.
    \end{align}
\end{subequations}
To this formulation corresponds the following system of ordinary differential equations, with
unknown functions $\seq{a^N}{n}{1}{N}$ and $\seq{b^N}{n}{1}{N}$:
\begin{subequations}
    \begin{align}
        \label{eq:galerkin_splitting_u_differential_equation}
        &\derivative*{1}[a^N_n]{t} = \lambda_n \, b^N_n + \ip{\dot m}{\varphi_n}_{\partial\Omega}, \\
        \label{eq:galerkin_splitting_v_differential_equation}
        &b^N_n = \energyB \, \lambda_n \, a^N_n + \energyA \ip{\textstyle
    f_m\left(\sum_{i=1}^{N}a^N_i\,\varphi_i\right)}{\varphi_n} + \ip{\textstyle f_w\left(\textstyle \sum_{i=1}^N a^N_i \, \varphi_i \right)}{\varphi_n}_{\partial \Omega}, \\
        \label{eq:galerkin_boundary_condition_differential_equation}
        &a^N_n(0) = \ip{\phi_0}{\varphi_n},
    \end{align}
\end{subequations}
for $n = 1, \dots, N$.
%
%
Local existence and uniqueness of a solution to this system of equations is guaranteed by the
fact that that the right-hand side of~\eqref{eq:galerkin_splitting_u_differential_equation}
depends continuously on the coefficients $\seq{a^N}{n}{1}{N}$.
To show the existence of a global solution,
we will use the \emph{a priori} estimate presented in the following lemma.
\begin{lemma}
    Assume that $\abs{F_w(\phi)} \leq C(1 + \abs{\phi}^2)$.
    Then the solution $(\phi^N,\mu^N)$ to \cref{eq:galerkin_splitting_u,eq:galerkin_splitting_v,eq:galerkin_boundary_condition} satisfies
    \begin{equation}
        \label{eq:galerkin_splitting_energy_estimate_refined}
        \frac{1}{2} \int_{\Omega} \left(\frac{1}{2} \, \energyB \, \abs{\grad \phi^N}^2 + \energyA \, F_m(\phi^N) \right) \, \d \Omega
        + \frac{1}{2} \int_{\Omega_T} \abs{\grad \mu^N}^2 \leq C,
    \end{equation}
    where $C$ is independent of $N$ and $\Omega_T := \Omega \times (0, T)$.
\end{lemma}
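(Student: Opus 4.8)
The plan is to test the two Galerkin equations against one another to recover a discrete analogue of the energy law~\eqref{eq:energy_conservation_cahn_hilliard}, and then to absorb the boundary contributions by combining the growth hypothesis on $F_w$ with a trace inequality carrying a small parameter. Since the Galerkin solution is smooth in time, $\partial_t \phi^N \in S_N$, so I may take $\bar \phi = \mu^N$ in~\eqref{eq:galerkin_splitting_u} and $\bar \mu = \partial_t \phi^N$ in~\eqref{eq:galerkin_splitting_v}. Using $f_m = F_m'$ and $f_w = F_w'$, the chain rule identifies the right-hand side of the second identity with $\frac{\d}{\d t} E(\phi^N)$, where $E = E_m + \int_{\partial\Omega} F_w \,\dsurf$ is the full modified free energy; subtracting the two identities therefore yields
\[
    \frac{\d}{\d t} E(\phi^N) + \norm{\grad \mu^N}_{\lp{2}{\Omega}}^2 = \ip{\dot m}{\mu^N}_{\partial \Omega}.
\]
Integrating on $(0,t)$ gives the discrete energy balance
\[
    E(\phi^N(t)) + \int_0^t \norm{\grad \mu^N}_{\lp{2}{\Omega}}^2 \,\d s = E(\phi^N(0)) + \int_0^t \ip{\dot m}{\mu^N}_{\partial \Omega} \,\d s,
\]
and it then remains to (i) bound $E(\phi^N(0))$ uniformly in $N$, (ii) control the flux term, and (iii) bound the target quantity from below by $E(\phi^N)$.

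The crux is step (iii), and it is here that the hypothesis $\abs{F_w(\phi)} \le C(1 + \abs{\phi}^2)$ is indispensable; it fails for the unmodified cubic wall energy, which is precisely the difficulty flagged in \cref{sec:mathematical_model}. Using it, $\big|\int_{\partial\Omega} F_w(\phi^N)\big| \le C\abs{\partial\Omega} + C\norm{\phi^N}_{\lp{2}{\partial\Omega}}^2$. I would then invoke the trace inequality with a parameter, $\norm{\phi^N}_{\lp{2}{\partial\Omega}}^2 \le \delta \norm{\grad \phi^N}_{\lp{2}{\Omega}}^2 + C_\delta \norm{\phi^N}_{\lp{2}{\Omega}}^2$, choosing $\delta$ so that $C\delta = \energyB/4$ in order to absorb a quarter of the gradient energy. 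The superquadratic growth of $F_m(\phi) = \tfrac14(\phi^2-1)^2$ gives $\energyA \int_\Omega F_m(\phi^N) \ge c\,\norm{\phi^N}_{\lp{4}{\Omega}}^4 - C$, and a further application of Young's inequality (on a bounded domain) absorbs $\norm{\phi^N}_{\lp{2}{\Omega}}^2$ into this quartic term. The net effect is the coercivity bound $E(\phi^N) \ge \tfrac12 \int_\Omega \big(\tfrac12 \energyB \abs{\grad \phi^N}^2 + \energyA F_m(\phi^N)\big)\,\d\Omega - C$, which is exactly the first term of the claimed estimate.

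For the initial data (i), I would use that~\eqref{eq:galerkin_boundary_condition} makes $\phi^N(0)$ the $\lp{2}{\Omega}$-orthogonal projection of $\phi_0$ onto $S_N$; since the $\varphi_n$ are orthogonal in both $\lp{2}{\Omega}$ and $H^1(\Omega)$, this projection is $H^1$-stable, whence $\norm{\phi^N(0)}_{H^1(\Omega)} \le \norm{\phi_0}_{H^1(\Omega)}$, and the embedding $H^1(\Omega) \hookrightarrow \lp{4}{\Omega}$ (continuous for $d \le 4$) bounds both $\int_\Omega F_m(\phi^N(0))$ and, via the trace embedding and the growth hypothesis, $\int_{\partial\Omega} F_w(\phi^N(0))$, uniformly in $N$. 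For the flux term (ii) I would split $\mu^N$ into its spatial mean $m^N$ and its fluctuation, estimating the latter by $\norm{\grad \mu^N}_{\lp{2}{\Omega}}$ through Poincaré--Wirtinger and the trace inequality; a Cauchy--Schwarz/Young step then absorbs half of $\int_0^t \norm{\grad \mu^N}_{\lp{2}{\Omega}}^2 \,\d s$ (leaving the factor $\tfrac12$ of the statement), while the mean $m^N$ is recovered by testing~\eqref{eq:galerkin_splitting_v} with the constant eigenfunction $\varphi_1$ (recall $\lambda_1 = 0$), giving it in terms of $\int_\Omega f_m(\phi^N)$ and $\int_{\partial\Omega} f_w(\phi^N)$, both controlled by the energy already estimated. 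Since this couples the flux term back to $E(\phi^N)$, the estimate is closed by Grönwall's inequality, using $\dot m \in C([0,T];\lp{2}{\partial\Omega})$, and the resulting constant is independent of $N$. The main obstacle is thus the treatment of the two boundary terms: absorbing the possibly negative wall energy into the bulk energy, and controlling the flux term despite having only gradient---rather than full $H^1$---control of $\mu^N$.
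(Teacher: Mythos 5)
Your proposal is correct and follows essentially the same route as the paper's proof: testing with $\bar\phi = \mu^N$ and $\bar\mu = \partial_t\phi^N$ to get the discrete energy balance, absorbing the wall energy into the bulk energy via a trace inequality with a small parameter plus the quartic growth of $F_m$, splitting the flux term into the mean of $\mu^N$ (recovered by testing with a constant) and its fluctuation (controlled by Poincar\'e--Wirtinger and trace), and closing with Gr\"onwall. Your treatment of the initial data via the $H^1$-stability of the $L^2$-projection onto the eigenfunction basis is a detail the paper leaves implicit, but it does not change the argument.
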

\begin{proof}
Setting $\bar \phi = \mu^N$, $\bar \mu = \partial_t \phi^N$ in
~\cref{eq:galerkin_splitting_u,eq:galerkin_splitting_v} and subtracting leads to the equation
\begin{equation}
    \begin{aligned}
        \label{eq:galerkin_splitting_energy_estimate}
        \derivative*{1}{t} \left[ E_m(\phi^N) + E_w(\phi^N) \right]
        :=& \derivative*{1}{t} \left[ \int_{\Omega} \frac{1}{2} \, \energyB \, \abs{\grad \phi^N}^2 + \energyA \, F_m(\phi^N) \, \d \Omega + \int_{\partial \Omega} F_w(\phi^N) \, \dsurf \right] \\
        =& - \int_{\Omega} \abs{\grad \mu^N}^2 \d \Omega + \int_{\partial\Omega} \dot m \, \mu^N \, \dsurf.
    \end{aligned}
\end{equation}
Using a trace inequality,
H\"older's inequality,
and Young's inequality with a parameter,
we have, for all $u \in H^1(\Omega)$,
\begin{subequations}
    \begin{align}
        \norm{F_w(u)}_{\lp{1}{\partial \Omega}}
        &\leq C \norm{1 + \abs{u}^2}_{\lp{1}{\partial \Omega}} \\
        &\leq C \, \left(1 + \norm{u^2}_{\lp{1}{\Omega}} + \norm{\grad (u^2)}_{\lp{1}{\Omega}}\right) \\
        &= C \, \left(1 + \norm{u^2}_{\lp{1}{\Omega}} + 2\norm{u \, \grad u}_{\lp{1}{\Omega}}\right) \\
        &\leq C \, \left(1 + \norm{u^2}_{\lp{1}{\Omega}} + \frac{1}{\alpha}\norm{u^2}_{\lp{1}{\Omega}} + \alpha\norm{\grad u}_{\lp{2}{\Omega}}^2\right) \quad \forall \alpha > 0.
    \end{align}
\end{subequations}
Now we use the simple fact that,
for any $\beta > 0$ and $0 \leq s \leq t$,
the inequality $\abs{x}^s \leq \beta^s + \beta^{s-t} \, \abs{x}^t$ holds true for all $x \in \mathbb R$,
to obtain
\begin{align}
    \label{eq:galerkin_splitting_bound_boundary_energy}
    \norm{F_w(u)}_{\lp{1}{\partial \Omega}}
    \leq C + \frac{1}{2} E_m(u)
\end{align}
for a constant $C$ independent of $u$.

In addition, using a trace inequality,
Poincar\'e inequality,
and ~\eqref{eq:galerkin_splitting_v} with $\bar \mu = 1$,
\begin{equation}
    \label{eq:galerkin_splitting_bound_boundary_mass_flux}
    \begin{aligned}
        \abs{\int_{\partial \Omega} \dot m \, \mu^N \, \dsurf}
        &\leq \abs{\int_{\partial \Omega} \dot m \, \left(\mu^N - \frac{1}{\abs{\Omega}}\int_{\Omega} \mu^N \,\d \Omega \right) \, \dsurf}
        + \abs{\frac{1}{\abs{\Omega}} \, \int_{\partial\Omega} \dot m \, \dsurf \int_{\Omega} \mu^N \, \d \Omega} \\
        & \leq C \, \norm{\dot m}_{\lp{2}{\partial \Omega}} \norm{\grad \mu^N}_{\lp{2}{\Omega}}
        +  \frac{1}{\abs{\Omega}} \, \int_{\partial\Omega} \abs{\dot m} \, \dsurf \abs{ \energyA \ip{f_m(\phi^N)}{1} + \ip{f_w(\phi^N)}{1}_{\partial \Omega} }\\
        & \leq C \, \norm{\dot m}_{\lp{2}{\partial \Omega}}^2 + \frac{1}{2}  \norm{\grad\mu^N}_{\lp{2}{\Omega}}^2 + C \, \norm{\dot m}_{\lp{2}{\partial \Omega}}\,(E_m(\phi^N) + 1). \\
    \end{aligned}
\end{equation}
Integrating~\eqref{eq:galerkin_splitting_energy_estimate} in time,
and rearranging using \cref{eq:galerkin_splitting_bound_boundary_energy,eq:galerkin_splitting_bound_boundary_mass_flux},
\begin{align}
    \label{eq:galerkin_splitting_energy_estimate_refined_before_gronwall}
    \frac{1}{2} E_m(\phi^N(t)) + \frac{1}{2} \int_{\Omega_t} \abs{\grad \mu^N}^2
    &\leq C + \frac{3}{2} E_m(\phi^N(0)) + C \, \int_{\partial \Omega_T} \abs{\dot m}^2 + C \int_{0}^{t} \norm{\dot m}_{\lp{2}{\partial \Omega}} \, E_m(\phi^N) \, \d s, \\
    &\leq C + C\int_{0}^{t} \norm{\dot m}_{\lp{2}{\partial \Omega}} \, E_m(\phi^N(s)) \, \d s
\end{align}
where we used the notations $\Omega_t$ and $\partial \Omega_t$, $t > 0$, to denote
$\Omega \times (0,t)$ and $\partial\Omega \times (0,t)$, respectively.
The last inequality holds by the assumptions that $\phi_0 \in H^1(\Omega)$ and $\dot m \in C([0,T];\lp{2}{\partial\Omega})$.
Using a Gr\"onwall inequality, we have \cref{eq:galerkin_splitting_energy_estimate_refined}.
\end{proof}

By integration by parts of the first term in
\cref{eq:galerkin_splitting_energy_estimate_refined}, we obtain
$\sum_{n=1}^{N} \lambda_n (a^N_n)^2 < C$. This result, together with the
inequality
\begin{equation}
    \label{eq:galerkin_splitting_bound_on_average_phi}
    a_1^N(t) = a_1^N(0) + \int_0^T \int_{\partial\Omega} \dot m \, \dsurf \leq a_1^N(0) + C \, \norm{\dot m}_{C([0, T];\lp{2}{\partial\Omega}}
\end{equation}
implied by \Cref{eq:galerkin_splitting_u_differential_equation} and the fact that $\lambda_1 = 0$,
show that the coefficients $\seq{a^N}{n}{1}{N}$ do not blow up,
and by \cref{eq:galerkin_splitting_v_differential_equation} neither do the coefficients $\seq{b^N}{n}{1}{N}$,
implying global existence.

In addition to~\eqref{eq:galerkin_splitting_energy_estimate_refined},
we have the usual estimate on $\derivative{1}[\phi^N]{t}$:
denoting by $\Pi^N$ the $L^2(\Omega)$ projection on $S_N$,
for all $\psi \in \lp{2}{0,T;H^1(\Omega)}$ the following holds:
\begin{subequations}
\label{eq:galerkin_splitting_energy_estimate_time_derivative}
\begin{align}
    \abs{\int_{\Omega_T} \derivative{1}[\phi^N]{t} \, \psi}
    &= \abs{\int_{\Omega_T} \derivative{1}[\phi^N]{t}\, (\Pi^N\psi)} \\
    &= \abs{\int_{\Omega_T} \grad \mu^N \cdot \grad (\Pi^N\psi)}
    + \abs{\int_{\partial \Omega_T} \dot m \,(\Pi^N \psi)} \\
    &\leq \left(\int_{\Omega_T}  \abs{\grad \mu^N}^2\right)^{\frac{1}{2}}  \left(\int_{\Omega_T} \abs{\grad (\Pi^N \psi)}^2\right)^{\frac{1}{2}}
    + \left(\int_{\partial\Omega_T}  \abs{\dot m}^2\right)^{\frac{1}{2}}  \left(\int_{\partial\Omega_T} \abs{\Pi^N \psi}^2\right)^{\frac{1}{2}} \\
    &\leq \, C \norm{\psi}_{\lp{2}{0,T;H^1(\Omega)}}.
\end{align}
\end{subequations}
This shows that $\norm{\derivative{1}[\phi^N]{t}}_{\lp{2}{0,T;(H^1(\Omega))'}} \leq C$.

Let $p$ be such that the embedding
$H^1(\Omega) \subset \lp{p}{\Omega}$ is compact,
i.e., by Rellich-Kondrachov theorem,
$p < \infty$ if $d=1$ or $d=2$,
and $p < \frac{2d}{d-2}$ if $d>2$.
Using~\cref{eq:galerkin_splitting_energy_estimate_refined,eq:galerkin_splitting_energy_estimate_time_derivative},
we can apply results~\eqref{eq:galerkin_splitting_lyons_lemma_lp} and~\eqref{eq:galerkin_splitting_lyons_lemma_cont} to our case,
with $X = H^1(\Omega)$, $Y = \lp{p}{\Omega}$ and $Z=(H^1(\Omega))'$,
to conclude that there exists a subsequence such that
\begin{subequations}
\begin{align}
    \label{eq:galerkin_splitting_convergence_weakstar_phi_linf}
    \phi^N \to \phi \quad & \text{weak-* in } \lp{\infty}{0,T;H^1(\Omega)}, \\
    \label{eq:galerkin_splitting_convergence_weak_derphi_l2}
    \derivative{1}[\phi^N]{t} \to \derivative{1}[\phi]{t}  \quad & \text{weakly in } \lp{2}{0,T;(H^1(\Omega))'}, \\
    \label{eq:galerkin_splitting_convergence_strong_phi_cont}
    \phi^N \to \phi \quad & \text{strongly in } C([0,T],L^p(\Omega)), \\
    \label{eq:galerkin_splitting_convergence_strong_phi_lp}
    \phi^N \to \phi \quad & \text{strongly in } \lp{2}{0,T;\lp{p}{\Omega}}.
\end{align}
\end{subequations}
when $N \to \infty$.
In addition,
note that since $\phi^N$ is bounded in $\lp{\infty}{0,T;\lp{2}{\partial\Omega}}$,
there is a subsequence such that
$\phi^N \to v$ weak-* in $\lp{\infty}{0,T;\lp{2}{\partial\Omega}}$
for some function $v$ in that space,
and thus also $\phi^N \to v$ weakly in the coarser $\lp{2}{0,T;\lp{2}{\partial\Omega}}$.
But also $\phi^N \to \phi$ weakly in $\lp{2}{0,T;\lp{2}{\partial\Omega}}$,
because $\phi^N \to \phi$ weakly in $\lp{2}{0,T;H^1(\Omega)}$ and
by continuity of the trace operator
(indeed, an operator between Hilbert spaces that is continuous in the strong topologies,
is continuous in the weak ones too), so $v = \phi$.
The same reasoning can be applied to $f_w(\phi^N)$,
taking into account that $f_w: u \rightarrow f_w(u)$ is continuous on $\lp{2}{0,T;\lp{2}{\partial\Omega}}$,
to conclude
\begin{subequations}
    \begin{align}
        \label{eq:galerkin_splitting_convergence_at_the_boundary}
        \phi^N \to \phi \quad & \text{weak-* in } \lp{\infty}{0,T;\lp{2}{\partial \Omega}}, \\
        \label{eq:galerkin_splitting_convergence_at_the_boundary_fw}
        f_w(\phi^N) \to f_w(\phi) \quad & \text{weak-* in } \lp{\infty}{0,T;\lp{2}{\partial \Omega}}.
    \end{align}
\end{subequations}

Regarding the chemical potential,
testing~\eqref{eq:galerkin_splitting_v} with $\bar \mu = 1$ implies that:
\begin{equation}
    \int_{\Omega} \mu \, \d \Omega = \int_{\Omega} \energyA \, f_m(\phi^N) \, \d \Omega + \int_{\partial \Omega} f_w(\phi^N) \, \dsurf,
\end{equation}
which,
together with the energy estimate~\eqref{eq:galerkin_splitting_energy_estimate_refined},
implies that
$\mu^N$ is bounded in $\lp{2}{0,T;H^1(\Omega)}$,
leading to the existence of a further subsequence such that
\begin{equation}
    \label{eq:galerkin_splitting_convergence_weak_mu_l2}
    \mu^N \rightarrow \mu \quad \text{weakly in } \lp{2}{0,T;H^1(\Omega)}.
\end{equation}
Proceeding in a standard fashion,
we consider an integer $M$ and arbitrary functions $ \phi^M,  \mu^M \in C([0,T],H^1(\Omega))$
such that
\begin{equation}
    \phi^M = \sum_{n=1}^{M} \bar a_n(t) \, \varphi_n, \qquad  \mu^M = \sum_{n=1}^{M} \bar b_n(t) \, \varphi_n,
\end{equation}
with $\{\bar a_n\}_{n=1}^M$, $\{\bar b_n\}_{n=1}^M$ smooth functions.
Using $\phi^M$ and $\mu^M$ as test functions in \cref{eq:galerkin_splitting_u,eq:galerkin_splitting_v},
integrating in time,
taking the limit $N \to \infty$,
and using the convergence results given in \cref{eq:galerkin_splitting_convergence_weak_derphi_l2,eq:galerkin_splitting_convergence_strong_phi_cont,eq:galerkin_splitting_convergence_strong_phi_lp,eq:galerkin_splitting_convergence_at_the_boundary_fw,eq:galerkin_splitting_convergence_weak_mu_l2,eq:galerkin_splitting_convergence_weakstar_phi_linf}, we obtain
\begin{subequations}
    \begin{align}
        &\int_0^T \dup{\partial_t \phi}{\phi^M} \, \d t + \int_0^T \ip{\grad \mu}{\grad \phi^M} \,\d t = \int_0^T \ip{\dot m}{\phi^M}_{\partial \Omega} \, \d t, \\
        &\int_0^T \ip{\mu}{\mu^M} \, \d t = \int_0^T \energyB \ip{\grad \phi}{\grad  \mu^M} \, \d t  + \int_0^T \energyA \ip{f_m(\phi)}{\mu^M} \, \d t + \int_0^T \ip{f_w(\phi)}{\mu^M}_{\partial \Omega} \, \d t,
    \end{align}
\end{subequations}
from which we conclude using a standard density argument.
\end{proof}

\end{document}